\DeclarePairedDelimiter{\ceil}{\lceil}{\rceil}
\numberwithin{equation}{section}
\newtheorem{algorithm}{Algorithm}
\newcommand\wavydecor{%
    \draw[decoration={coil,aspect=0.1,segment length=5pt,amplitude=1.0pt},decorate,line width=1.5pt,black]
      (O|-P) -- (O);
}
\newmdenv[
hidealllines=true,
innerleftmargin=10pt,
innerrightmargin=0pt,
innertopmargin=0pt,
innerbottommargin=0pt,
leftmargin=-10pt,
skipabove=.5\baselineskip,
skipbelow=.5\baselineskip,
singleextra={\wavydecor},
firstextra={\wavydecor},
secondextra={\wavydecor},
middleextra={\wavydecor}
]{done}
\newcommand\wavydecorgreen{%
    \draw[decoration={coil,aspect=0.1,segment length=5pt,amplitude=1.0pt},decorate,line width=1.5pt,green]
      (O|-P) -- (O);
}
\newmdenv[
hidealllines=true,
innerleftmargin=10pt,
innerrightmargin=0pt,
innertopmargin=0pt,
innerbottommargin=0pt,
leftmargin=-10pt,
skipabove=.5\baselineskip,
skipbelow=.5\baselineskip,
singleextra={\wavydecorgreen},
firstextra={\wavydecorgreen},
secondextra={\wavydecorgreen},
middleextra={\wavydecorgreen}
]{final}
\newcommand{\Aq}{\alpha^{(q)}_{i,\mb{k}}}
\newcommand{\Aqb}{\bar{\alpha}^{(q)}_{i,\mb{k}}}
\newcommand{\AqbM}{\bar{\alpha}_{i,\mb{k}}^{(q,M)}}
\newcommand{\Aqo}{\alpha^{(q)}_{i,k}}
\newcommand \byqi{\bar{y}^{(q)}_i}
\newcommand \byqiM{\bar{y}^{(q,M)}_i}
\newcommand \byqM{\bar{y}^{(q,M)}}
\newcommand{\Resq}[1]{\Sq_i\left(X_{i:N}^{i#1}\right)}
\newcommand{\ResqM}[1]{S_i^{(q,M)}\left(X_{i:N}^{i#1}\right)}
\newcommand{\LG}{\mathcal{L}_\Gamma}
\newcommand \cloud{\cC}
\newcommand{\norm}[1]{\left\lVert#1\right\rVert}
\newcommand{\E}{\mathbb{E}}
\newcommand \Esp[1]{\E\left[#1\right]}
\newcommand \Var[1]{\mathbb{V}{\rm ar}\left[#1\right]}
\renewcommand{\P}{\mathbb{P}}
\newcommand{\R}{\mathbb{R}}
\newcommand{\N}{\mathbb{N}}
\renewcommand\Re[1]{{\rm Re}\left(#1\right)}
\newcommand{\1}{\mathbf{1}}
\newcommand{\mb}[1]{\mathbf{#1}}
\newcommand\mbx{\mb{x}}
\newcommand\mbu{\mb{u}}
\newcommand\mbk{\mb{k}}
\newcommand \Fnul{F_{\nu_{l}}}
\newcommand \Fnuo{F_{\nu_{1}}}
\newcommand \Fnulinv[1]{F^{-1}_{\nu_{#1}}}
\newcommand \Fnuoinv{F^{-1}_{\nu_{1}}}
\newcommand \Fnudinv{F^{-1}_{\nu_{d}}}
\newcommand \dnFnudinv[1]{{\rm d}^{#1}_u\Fnuoinv(u)}
\newcommand \dnFnulinv[2]{{\rm d}^{#1}_{u_{#2}}\Fnulinv{#2}(u_{#2})}
\newcommand \dnFnulinvapp[2]{{\rm d}^{#1}_{u}\Fnulinv{#2}(u)}
\newcommand \Fnuinv{F^{-1}_{\nu}}
\newcommand{\cA}{\mathcal{A}}
\newcommand{\cC}{\mathcal{C}}
\newcommand{\cE}{\mathcal{E}}
\newcommand{\cF}{\mathcal{F}}
\newcommand{\cG}{\mathcal{G}}
\newcommand{\cT}{\mathcal{T}}
\renewcommand\Xi{X^i}
\newcommand\Xii{X^i_{i}}
\newcommand\Xij{X^i_{j}}
\newcommand\XiN{X^i_{N}}
\newcommand\Xijp{X^i_{j+1}}
\newcommand\Sq{S^{(q)}}
\newcommand\SqM{S^{(q,M)}}
\newcommand{\DEG}{{\tt DEG}}
\newcommand{\HFG}{{$\bf (A_{f,g})$}}
\newcommand{\HBS}{{$\bf (A_{b,\sigma})$}}
\newcommand{\Hnu}{{$\bf (A_\nu)$}}
\newcommand \dyi[1]{{\rm d}_x^{#1}y_i}
\newcommand \pdyi[2]{{\partial}_{x_{#2}}^{#1}y_i}
\newcommand \yq{y^{(q)}}
\newcommand \yqi{\yq_{i}}
\newcommand \yqjp{\yq_{j+1}}
\newcommand \hq{h^{(q)}}
\newcommand \hqi{\hq_{i}}
\newcommand \dhqi{{\rm d}_u\hqi}
\newcommand \ddhqi[1]{{\rm d}_u^{#1}\hqi}
\newcommand \dnu[1]{{\rm d}_x^{#1}\nu_l}
\newcommand \pddlhqi[2]{\partial^{#1}_{u_{#2}}\hqi}
\newcommand \ddyqi[1]{{\rm d}_x^{#1}\yqi}
\newcommand \pddyqi[2]{\partial^{#1}_{x_{#2}}\yqi}
\newcommand \pdlhqi[2]{\partial^{#1}_{u_{#2}}\hqi}
\newcommand \pdhqi[2]{\partial^{#1}_{#2}\hqi}
\newcommand \LU{L^2_U([0,1]^d)}
\newcommand \LNU{L^2_\nu(\R^d)}
\newcommand{\dd}{{\rm d}}
\newcommand{\dx}{\dd x}
\newcommand{\ds}{\dd s}
\newcommand{\dt}{\dd t}
\newcommand{\du}{\dd u}
\newcommand{\dz}{\dd z}
\newcommand{\dW}{\dd W}
\newcommand{\dxx}{\dd\mathbf{x}}
\newcommand{\duu}{\dd\mathbf{u}}
\newcommand{\ti}{{t_i}}
\newcommand{\tip}{{t_{i+1}}}
\newcommand{\tj}{{t_{j}}}
\newcommand{\tjp}{{t_{j+1}}}
\begin{document}

\title{Quasi-Regression Monte-Carlo scheme for semi-linear PDEs and BSDEs with large scale parallelization on GPUs\thanks{The first author research is part of the Finance for Energy Markets (FiME) lab, of the Chair Financial Risks of the Risk Foundation and of the ANR project CAESARS (ANR-15-CE05-0024). The second author has been financially supported by the Chair Financial Risks of the Risk Foundation, the Spanish grant MTM2016-76497-R and the \textit{Xunta de Galicia} 2018 postdoctoral grant. The third author was partially supported by Spanish grant MTM2016-76497-R.}
}

\titlerunning{Quasi-Regression Monte-Carlo scheme for semi-linear PDEs and BSDEs}        

\author{E. Gobet \and J. G. L\'opez-Salas \and C. V\'azquez}


\institute{E. Gobet \at
              Centre de Math\'ematiques Appliqu\'ees, \'Ecole Polytechnique and CNRS, route de Saclay, 91128 Palaiseau cedex, France \\
              \email{emmanuel.gobet@polytechnique.edu}           
           \and
           J. G. L\'opez-Salas (\Letter \hspace{0.1cm}Corresponding author, \email{jose.lsalas@udc.es}) \at
              Centre de Math\'ematiques Appliqu\'ees, \'Ecole Polytechnique and CNRS, route de Saclay, 91128 Palaiseau cedex, France
           \and
           C. V\'azquez \at 
	       Department of Mathematics, Faculty of Informatics, Universidade da Coru\~na, Campus de Elvi\~na s/n, 15071, A Coru\~na, Spain \\
	       \email{carlosv@udc.es}
}

\date{Received: date / Accepted: date}

\maketitle
\begin{abstract}
 In this article we design a novel quasi-regression Monte Carlo algorithm in order to approximate the solution of discrete time backward stochastic differential equations (\mbox{BSDEs}), and we analyze the convergence  of the proposed method. The algorithm  also approximates the solution to the related semi-linear parabolic partial differential equation (PDE) obtained through the well known Feynman-Kac representation. For the sake of enriching the algorithm with high order convergence a weighted approximation of the solution is computed and appropriate conditions on the parameters of the method are inferred. With the challenge of tackling problems in high dimensions we propose suitable projections of the solution and efficient parallelizations of the algorithm taking advantage of powerful many core processors such as graphics processing units (GPUs).
\keywords{BSDEs \and semi-linear PDEs \and dynamic programming equations \and empirical regressions \and parallel computing \and GPUs \and CUDA}
 \subclass{49L20 \and 62Jxx \and 65C30 \and 93E24 \and 68W10}
\end{abstract}


\section{Introduction}

Since the 1940s, Feynman-Kac formula has provided a probabilistic interpretation of the solutions of linear partial differential equations (PDEs). Thence, Monte Carlo method has become an alternative to deterministic numerical algorithms for solving linear PDEs. While traditional deterministic methods like finite differences, finite elements or finite volumes suffer from the so-called \textit{curse of dimensionality} \cite{bellmann} (i.e. as the dimension grows, the complexity of the algorithms needed to achieve a given accuracy grows exponentially), the rate of convergence of Monte Carlo methods does not depend on the dimension of the state space. As a consequence, in order to solve equations of dimension greater than $4$ or $5$, the probabilistic approach often remains the only numerical method available.

In the 1990s, the theory of backward stochastic differential equations (BSDEs) provided a probabilistic interpretation for non-linear problems. BSDEs are stochastic differential equations (SDEs) for which a terminal condition is specified and their solutions are pairs of processes. Linear BSDEs were introduced by Bismut \cite{bis:73} in 1973 in order to study stochastic control problems and their connections with the Pontryagin maximum principle of optimality. A rigorous study of (non-linear) BSDEs was started by Pardoux and Peng in 1990, who proved in  their seminal article \cite{pard:peng:90} the first well-posedness result for non-linear BSDEs, see also \cite{pard:peng:92}. These equations have the general form of
 \begin{align}
 -\dd Y_t &= f(t,Y_t,Z_t) \dd t - Z_t \dd W_t, \nonumber \\
 Y_T &= \xi, \nonumber
\end{align}
where $0\leq t \leq T $, $(W_t)_{t\geq 0}$ is a Brownian motion, the terminal condition $\xi$ is a random variable, and the solution is a pair of adapted processes $(Y,Z)$.
The first applications of BSDEs in finance were conducted by El Karoui, Peng and Quenez \cite{elka:peng:quen:97} in 1997. They applied BSDEs in a European option pricing problem, where the randomness of $(Y,Z)$ comes from a forward SDE (FSDE in short) in $X$ as follows,
   \begin{align*}
     \dd X_t &= b(t,X_t) \dt + \sigma(t,X_t) \dW_t, \quad   X_0 = x_0,  \\
     -\dd Y_t &= f(t,X_t,Y_t,Z_t) \dd t - Z_t \dW_t, \quad  Y_T = g(X_T);
    \end{align*}
the function $f$ models possible imperfections and restrictions when trading in the markets.
Now the terminal condition of the BSDE is determined by the stochastic terminal value of the FSDE through some deterministic function $g$. This system of SDEs, to which we will devote this article, is known as decoupled\footnote{Fully coupled FBSDEs have the general form   \begin{align*}
     \dd X_t &= b(t,X_t,Y_t,Z_t) \dd t + \sigma(t,X_t,Y_t,Z_t) \dd W_t, \quad   X_0 = x_0,  \\
     -\dd Y_t &= f(t,X_t,Y_t,Z_t) \dd t - Z_t \dd W_t, \quad  Y_T = g(X_T).
    \end{align*}} forward-backward stochastic differential equation (FBSDE in short). It is also referred as Markovian BSDE, since the terminal condition of the BSDE and $f$ are random only through a Markov process. 
    
After the already mentioned celebrated work of El Karoui, Peng and Quenez, the interest in BSDEs has increased, mainly due to the connection of these tools with stochastic control and PDEs, connection that will be stated clearly soon. Let us start with a rigorous definition of the problem at hand.

\paragraph{The problem.}
In this work we are interested in numerically approximating the solution $(X,Y,Z)$ of a decoupled forward-backward stochastic differential equation
\begin{align}
\label{eq:fbsde}
Y_t& =  g(X_T) + \int_t^T f(s,X_s,Y_s,Z_s) \ds - \int_t^T Z_s \dW_s, \\
\label{eq:eds}
X_t & =  x + \int_{ 0} ^t b(s,X_s) \ds + \int_{0} ^t \sigma(s,X_s) \dW_s.
\end{align}
The terminal $T>0$ is fixed. These equations are considered in a filtered probability space $(\Omega,\cF,\P,(\cF_t)_{ 0 \leq t \leq T})$ supporting a $q\geq 1$  dimensional Brownian motion $W$. The filtration is assumed to be the natural filtration of $W$ augmented with the $\P$-null  sets. In this representation, $X$ is a $d$-dimensional adapted continuous process (called the forward component), $Y$ is a scalar adapted continuous process and $Z$ is a $q$-dimensional progressively measurable process. Consideration of multi-dimensional $Y$ is quite straightforward and is discarded for the sake of simplicity. Regarding terminology, $g(X_T)$ is called \emph{terminal condition} and $f$ the \emph{driver}. Under standard assumptions (like $L^2$-integrability and Lipschitz regularity), there is a unique solution $(X,Y,Z)$ (see \cite{pard:rasc:14} for broad account on existence/uniqueness assumptions). Detailed hypotheses for the current study are exposed later.

Our aim in this article is to design a new numerical scheme for approximating the solution to the above BSDE. When $f$ is linear in $(Y,Z)$, explicit representations are available \cite[Theorem 1.1]{elka:peng:quen:97}, which can be easily turned into simple and efficient Monte-Carlo algorithms. Our focus is thus more on the case of non-linear driver yielding non-linear equations.

\paragraph{Feynman-Kac representations.} Let us clearly show that FBSDEs are important tools in semi-linear PDEs and stochastic control theory. Denote by  $\cal A$  the infinitesimal generator of $X$:
$$\cA =\sum_i b_i(t,x)\partial_{x_i}+\frac{1}{2 }\sum_{i,j} [\sigma \sigma^\top]_{i,j}(t,x)\partial^2_{x_i,x_j},$$
where $^\top$ denotes the transpose operator.
Consider the parabolic partial differential equation (PDE) of the form
\begin{equation}
\label{eq:pde}
\begin{cases}
\partial_{t}u(t,x)+{\cal A}u(t,x)+f(t,x,u(t,x),(\nabla_x u\sigma)(t,x))=0, \quad t<T,\\
u(T,.)=g(.).
\end{cases}
\end{equation}
Assuming that $u$ is smooth enough, the It\^o formula leads to 
$$u(t,X_t)=u(0,x)+\int_0^t (\partial_{t}u(s,X_s)+{\cal A}u(s,X_s))\ds+\int_0^t (\nabla_x u \sigma)(s,X_s) \dW_s.$$
Using the PDE \eqref{eq:pde} and noting $u(T,X_T)=g(X_T)$, we get
$$u(t,X_t)=g(X_T)+\int_t^T f(s,X_s,u(s,X_s), (\nabla_x u \sigma)(s,X_s) )\ds-\int_t^T (\nabla_x u \sigma)(s,X_s)  \dW_s,$$
therefore proving that the solution $(Y,Z)$ can be represented as
$$(Y_{t},Z_{t})=(u(t,X_t),(\nabla_{x} u\sigma) (t,X_{t})).$$
This is the well-known Feynman-Kac representation, making connection between semi-linear PDEs and BSDEs.  One may prefer one or the other representations depending on the application at hand, on the numerical scheme to use, and on the theoretical estimates available (of probabilistic or PDE nature). As a consequence,  solving numerically $(Y,Z)$ for any realization of $X$ or computing $(u,\nabla u)$ in the full space $\R^d$ gives, in both cases, essentially access to the same solution. The above reasoning holds for Cauchy boundary conditions and parabolic PDEs as presented above, however extensions in the directions of various boundary conditions (Dirichlet, Neumann, free-boundary) or stationary problems (elliptic PDEs) are also available, see \cite[Sections 3.8, 5.4, 5.6, 5.7 and 5.8]{pard:rasc:14} and references therein.

In view of \eqref{eq:pde}, the link with stochastic control appears  clearly, by analogy with the Hamilton-Jacobi-Bellman equation. Since the non-linearity in $f$ is related only to the first derivative of $u$ (the $Z$ component), the underlying control problem is associated to a control  in the drift only (not in the diffusion coefficient). See \cite{elka:peng:quen:97,elka:hama:mato:08} for some examples in finance.

\paragraph{Applications.} We now conclude the presentation of connection between BSDEs and PDEs by giving  a broader overview of applications where these semi-linear PDEs come into play. We follow the presentation of \cite[Section 7.1]{gobe:16}, some examples can be found in \cite[Chapter 2]{henr:81} and \cite[Chapter 14]{smol:94}.

\begin{description}
\item \emph{Ecology.} In a region of the plane ($d=2$), suppose the existence of  2 interacting species in the same environment and denote $u_k(t,x)$ ($k=1,2$) the population density of the individuals of each species at a point of the space $x$ and at a  time $t$. These population densities obey to the system
\begin{equation}
\label{eq:reac:diff:ecology}
\begin{cases}
\partial_t u_1(t,x) =\alpha_1 \Delta  u_1(t,x) +f_1(t,x,u_1,u_2),\\
\partial_t u_2(t,x) =\alpha_2 \Delta  u_2(t,x) +f_2(t,x,u_1,u_2), \quad t>0, x\in \R^2,\\
u_i(0,\cdot)\text{ {given}},
\end{cases}
\end{equation}
where the two Laplacian operators model the diffusive movement (as a Brownian motion) of each species. Note that the initial condition at $t=0$ can be turned into a terminal condition at $t=T$ as in our previous setting, this is just a simple 
time reversal  $t\leftrightarrow T-t$.

The  function $f=(f_1,f_2)$ plays the role of a growth rate of the population, describing the available local  resources and interactions between the species: the case $\partial_{u_2} f_1<0$ and $\partial_{u_1} f_2>0$ corresponds to the predator-prey model, where the growth rate of the species 1 (prey) decreases in the case of high density of the species 2 (predator) and conversely for the growth rate of predators; the case $\partial_{u_2} f_1>0$ and $\partial_{u_1} f_2>0$ models symbiosis, where each species benefits from the other;  $\partial_{u_2} f_1<0$ and $\partial_{u_1} f_2<0$ describes competition between species. This kind of model, introduced in the 30s by Fisher, Kolmogorov, Petrovsky and Piskunov, is discussed with extra references in \cite{shig:kawa:97}.

\item \emph{Chemistry.} Suppose that a container contains $N$ chemical compounds taking part in $R$  independent reactions. Denote $c_i$ the concentration of the $i$-th {compound} and $\theta$ the temperature. Then their evolution  follows the $N+1$ equations (in $\R^+\times \R^3$)
\begin{equation}
\label{eq:reac:diff:chimie}
\begin{cases}
\varepsilon_p \partial_t c_i=D_i \Delta c_i+\sum_{j=1}^R \nu_{ij} g_j(c_1,\dots,c_N,\theta),\quad i=1,\dots,N,\\
\rho c_p \partial_t \theta = k \Delta \theta - \sum_{j=1}^R \sum_{i=1}^N\nu_{ij} H_i g_j(c_1,\dots,c_N,\theta),\\
c_i(0,\cdot)\text{ and }\theta(0,\cdot)\text{ {given}},
\end{cases}
\end{equation}
where $g_j$ is the speed of the $j$-th reaction and $H_i$ is the partial molar enthalpy of the $i$-th compound. For more details, see \cite{gava:68}. For reaction-diffusion equations, see \cite{smol:94}.

\item \emph{Neuroscience.} The famous model of Hodgkin and Huxley is a set of equations describing the psychological phenomenon of signal transmission in the axon (nerve fiber), showing the dependences between electrical excitability and various chemical ion concentrations. The PDE system is of  size $4$, the first unknown $u$ represents the electric potential and the three other unknowns $(v_1,v_2,v_3)$ are chemical concentrations: the system writes for $(t,x)\in \R^+\times \R $ (linear neuron)
\begin{equation}
\label{eq:reac:diff:neurosience}
\begin{cases}
c_0\partial_t u =\frac{ 1 }{R }  \partial^2_{xx }u +\kappa_1 v_1^3 v_2(c_1-u)+\kappa_2 v_3^4 (c_2-u)+\kappa_3 (c_3-u),\\
\partial_t v_1 =\varepsilon_1 \partial^2_{xx }  v_1 +g_1(u)(h_1(u)-v_1),\\
\partial_t v_2 =\varepsilon_2 \partial^2_{xx }  v_2 +g_2(u)(h_2(u)-v_2),\\
\partial_t v_3 =\varepsilon_3 \partial^2_{xx }  v_3 +g_3(u)(h_3(u)-v_3),\\
u(0,\cdot)\text{ and }v_i(0,\cdot)\text{ given},
\end{cases}
\end{equation}
for different positive constants $c_i, R, \kappa_i, \varepsilon_i$ and different functions $g_i,h_i$. There also exists a simplification of this model, known as the FitzHugh-Nagumo model. 

\item \emph{Materials physics.} The Allen-Cahn equation is a prototype model of  phase transition with a diffusive interface, used to model, for example, a solid/liquid phase transition \cite{visi:96}. The system is one-dimensional, and takes the form (in $\R^+\times \R^3$)
\begin{equation}
\label{eq:reac:diff:materiau}
\begin{cases}
\partial_t u =\varepsilon \Delta u + u(1-u^2),\\ 
u(0,\cdot) \text{ given.}
\end{cases}
\end{equation}
The solution $u$ thus represents an order parameter defining the arrangement of atoms in a crystal lattice.

\item \emph{Economy and finance.} There are numerous applications of BSDEs (and semi-linear PDEs) in finance, we refer the
reader to \cite{cvit:ma:96,elka:peng:quen:97,crep:13}. Let us mention:  valuation/hedging of contingent claim in complete markets; some market imperfections, such as higher interest rate for borrowing
\cite{berg:95}; incomplete markets \cite{pham:09} or super-replication \cite{elka:quen:95};  connections with recursive utilities in \cite{duff:epst:92}; non-linear pricing rule \cite{peng:03:a}; second order BSDEs (2BSDEs) \cite{cheri:mete:touz:vict:2006,soner:touz:zhang:2011} which provide a stochastic representation for solutions of fully non-linear parabolic PDEs through a driver depending also on another process directly connected with the Hessian of the solution of the corresponding PDE; reflected BSDEs, where the solution $Y$ is constrained to stay above a given process called obstacle, were introduced by El Karoui et al in \cite{karoui:kapo:pard:peng:quen97}, and combined with jumps are able to connect with partial-integro differential equations \cite{barles:buck:pard:97,delong:13}, etc. See \cite{elka:hama:mato:08,crep:13} for extra references of applications in finance.
\end{description}

\paragraph{Numerical approximations.} In the two past decades, there have been several attempts to design efficient approximation schemes for BSDEs. First, the four step algorithm was proposed by Ma, Proter and Yong \cite{ma:protter:yong:94} in 1994 to solve general FBSDEs. Their method is based on the finite difference approximation of the associated quasilinear parabolic PDE, and therefore becomes unaffordable in high dimension. In 1997, a discrete scheme based on the approximation of the Brownian motion by some discrete process was proposed by Chevance in \cite{chev:97}, while Bally considered a random time scheme in \cite{ball:97}, although this method requires the computation of conditional expectations to become a fully implementable solver.  At that time, only time-discretization issue was considered. Since then, optimal discretization strategies have been derived: see \cite{geis:geis:gobe:12} and references therein. All probabilistic approaches proposed later roughly follow three steps: time discretization, dynamic programming principle and computation of conditional expectations. The problem of discretization and simulation of the forward component $X$ is well-understood, one could use standard Euler scheme or other choices like Milstein scheme, see \cite{kloeden:platen:92}. The difficulty  lies in the backward component $Y$, since a naive discretization of the backward SDE in $Y$ is not guaranteed to be adapted. In order to obtain a computationally viable  backward induction scheme one should take conditional expectations. If the time grid is $\{t_0:=0,\dots,t_i,\dots,t_N=:T\}$, it leads to a nested expectation problem of the form  (also called Dynamic Programming Equation, or DPE for short)
\begin{align}
\label{eq:ODP:intro:a}
Z_{\ti}&\approx\Esp{ Y_{t_{i+1}} \frac{(W_\tip-W_\ti)^\top}{ \tip-\ti}\mid \cF_{\ti}},\\
\label{eq:ODP:intro:b}
Y_{T} = g(X_T), \quad Y_{\ti}&\approx\Esp{ Y_{t_{i+1}} + f(t_i,X_{\ti},Y_{\tip},Z_\ti) (\tip-\ti)\mid \cF_{\ti}},
\end{align}
for $i=N-1,\ldots,0$. We use the symbol $\approx$ to emphasize the heuristics, a rigorous justification can be done using the aforementioned references. 
The scheme \eqref{eq:ODP:intro:a}-\eqref{eq:ODP:intro:b}, which is known as one-step forward dynamic programming scheme, could be updated taking into account a resimulation of the forward process till terminal time $T$, thus resulting in the so-called multi-step forward dynamic programming scheme, which is written as follows
\begin{align}
\label{eq:MDP:intro:0a}
Z_{\ti}&\approx\Esp{ \bigg(g(X_T) +\sum_{j=i+1}^{N-1}f(t_j,X_{\tj},Y_{\tjp},Z_\tj) (\tjp-\tj)\bigg) \frac{(W_\tip-W_\ti)^\top}{ \tip-\ti}\mid \cF_{\ti}},\\
\label{eq:MDP:intro:0b}
Y_{\ti}&\approx\Esp{ g(X_T) +\sum_{j=i}^{N-1}f(t_j,X_{\tj},Y_{\tjp},Z_\tj) (\tjp-\tj)\mid \cF_{\ti}},
\end{align}
for $i=N-1,\ldots,0$. 
\\Analytically, both schemes \eqref{eq:ODP:intro:a}-\eqref{eq:ODP:intro:b} and \eqref{eq:MDP:intro:0a}-\eqref{eq:MDP:intro:0b} are equivalent. However, differences arise when conditional expectations are numerically approximated. In fact, the multi-step scheme leads to better error estimates of the solutions computed by the developed algorithms, see \cite{bend:denk:07,gobe:turk:2016}. The above backward dynamic programming equations make clearly appear that at each of $N$ grid times, one has to compute a conditional expectation function, that is a function from $\R^d$ to $\R$, this is in a way equivalent to solving the PDE at each time point $t_i$. This is a difficult problem, with increasing difficulty as the dimension gets large and/or as the unknown function gets less smooth (\emph{curse of dimensionality}), like for finite differences schemes  for PDEs. Among the numerous approaches, let us mention the quantization method of Bally and Pag\`es \cite{ball:page:03} (and generalized to coupled FBSDEs by Delarue and Menozzi \cite{dela:meno:06,dela:meno:08}), which is an optimal space discretization of the underlying dynamic programming equation, suitable to compute easily the conditional expectations on the space grid; in  \cite{bouc:touz:04} the authors compute the regression function with Malliavin calculus integration by parts combined with Monte Carlo simulations; the use of empirical regression methods and machine learning techniques is developed in \cite{gobe:lemo:wari:05,lemo:gobe:wari:06} (also called \emph{regression Monte-Carlo method}). Optimal convergence rates with regression methods are derived in \cite{gobe:turk:2016}, variance reduction using importance sampling is designed and analysed in \cite{bend:mose:10,gobe:turk:17}.

Recently, Henry-Labordere et al. \cite{henr:tan:touz:14} and Bouchard et al. \cite{bouc:tan:warin:zou:2017} proposed a branching method and a time step randomization applied to the Feynman-Kac representation of the PDE. Although branching techniques really overcome the curse of dimensionality, unfortunately they are only limited to small maturities, some small non-linearities and mainly to non-linearities that are polynomial in the solution and its gradient. Lately, in \cite{wei:jie:jentzen:2017} the authors propose a deep learning technique, referred as deep BSDE, to solve semi-linear PDEs or the corresponding BSDEs. They interpret the BSDE as a stochastic control problem with the gradient of the solution being the policy function, which is approximated or learned by a deep neural network, as has been done in deep reinforcement learning. More precisely, after discretizing the forward SDE in $X$ using Euler discretization, the algorithm reads the BSDE for $Y$ as a forward SDE, and the method tries to learn the value of $Y$ and $Z$ at each time step of the Euler scheme by minimizing a global loss function between the forward simulation of $Y$ until the maturity $T$ and the goal $g(X_T)$. In the recent work \cite{cwn:mik:warin:2019}, the authors give an improved version of the deep BSDE method, where a fixed point problem is solved using deep learning techniques. Numerical results show that these kind of algorithms are able to solve  problems with some accuracy in dimension $50$ and even above. However, these methods suffer so far from the lack of rigorous error estimates and the matter that convergence is not guaranteed: as in linear regression methods \cite{gobe:turk:2016}, there must be a tricky interplay between the number of time steps, the number of simulations, the number of neurons  and layers, to ensure a good accuracy. Moreover, one has to account for the likely possibility that the stochastic gradient descend method used to minimize the loss function is trapped in a local minimum.

\paragraph{Parallel computations.} Most of the above methods do not allow parallel computations because of the non-linearity of the equations. Recently in \cite{gltv}, the authors have proposed  a novel approach inspired from regression Monte-Carlo schemes suitable for massive parallelization, with excellent results on GPUs. It takes advantage of local approximations of the regression function in different strata of the state space $\R^d$ (partitioning estimates) and of a new stratified sampling scheme to guarantee that enough simulations are available in each stratum to compute accurately the regression functions. The parallelization step is made with respect to the strata.

Our current work lies in this vein and is aimed at overcoming some drawbacks of the stratified regression scheme of \cite{gltv}. In the previous scheme, the solution $u$ to the PDE is represented as a piecewise polynomial function (like in a discontinuous Galerkin method \cite{dipi:ern:11}) and therefore, even if the exact solution is continuous, the approximation may be not, especially at the interface of the different strata. In the current work, we propose a different scheme, where the approximated solution is decomposed on basis functions that are smooth and that have natural orthonormal properties with respect to the sampling distribution, thus avoiding the possible discontinuity at the interfaces. Moreover, the sampling distribution is required to have some nice stability properties, useful for  the propagation of error in the DPE
, see Theorem \ref{th:main}; we prove that the Student's t-distribution meets this requirement, see Theorem \ref{theo:uses}. One possible set of smooth and orthonormal functions is given by the trigonometric basis composed with a space transformation related to the Cumulative Distribution Function (CDF for short) of the Student's t-distribution. We call such a basis the \emph{Student-cosine basis}.
%
%
Instead of a regression Monte Carlo scheme, we design a \emph{quasi-regression Monte-Carlo scheme}: we take advantage of the fact that  the basis functions enjoy orthonormal properties, in order to compute the basis coefficients as simple Monte-Carlo averages, instead of solutions of least-squares problems. On the one hand, it leads to a simplified and quicker procedure (because we avoid inverting linear systems). On the other hand, we obtain a globally smooth approximation which is mainly suitable for smooth solutions $u$ \cite{funaro,Canuto-Hussaini-Quarteroni-Zang}: this is a significant difference with partitioning estimates which can approximate well locally (ir-)regular functions. In this new scheme, the notion of strata is lost and the parallelization is now made with respect to the numerous required simulations and basis coefficients.
Moreover, we establish tight non-asymptotic error estimates (as a function of the size of the function basis and the number of Monte-Carlo simulations). 

In recent years, GPUs have become increasingly attractive for general purpose parallel computations. GPUs are not only impressive graphics tools but also highly parallel programmable processors that can deliver order-of-magnitude performance gains over optimized CPU applications. CPU hardware is designed to handle complex tasks, instead, GPUs only do one thing well, they handle millions of repetitive low level tasks. These high performance computing devices offer an arithmetic capability and a memory bandwidth that substantially outpace their CPU counterpart. In fact, a single NVIDIA Titan V GPU can sustain over 13.8 tera-floating-point operations per second (Tflops) and a memory bandwidth of 652.8 GB/s. In the last years GPUs have been heavily used in scientific computation in order to achieve dramatic runtime reductions over traditional CPU codes. Particularly successful applications of GPUs include Monte Carlo methods like the one we target in the present work. Indeed, Monte Carlo algorithms follow all design principles of GPU computing, they give rise to embarrassingly parallel not divergent codes that can exploit the GPU arithmetic intensity and its streaming memory bandwidth. In the last part of this article our goal is to offer an efficient implementation of the proposed algorithm taking advantage of several GPUs, which will allow us to address the solution of semi-linear PDEs in high dimensions.

\paragraph{Organization of the paper.} In Section \ref{section:Mathematical framework and basic properties}, we set the
framework of our study, define some notations used throughout the
paper and describe our algorithm based on the approximation of
conditional expectations by a projection on a finite orthonormal basis of
functions. Compared to the usual DPE 
to solve \eqref{eq:MDP:intro}, we propose an equivalent modification that includes a weighted factor: this weight is related to the sampling distribution and its crucial purpose is to lead to higher order approximation of the unknown function under the condition it is smooth (see the Fourier-based analysis in Section \ref{sec:approError}). 
Proofs of the propagation of error are postponed to Section \ref{section:error}. This new numerical scheme has been tested on GPU, which is exposed in Section \ref{section:GPU implementations}. Numerical results are collected in Section \ref{sec:numExperiments}.

\paragraph{Notations of the paper.} 
\begin{enumerate}[(i)]
 \item $\log(x)$ stands for the natural logarithm of $x\in \mathbb{R}_+$.
 \item $|\mb{x}|$ stands for the Euclidean norm of the vector $\mb{x}=(x_1,\dots, x_n)\in \R^n$.
 \item $\overline{\mb x}$ stands for the $l_1$-norm of the vector $\mb{x}=(x_1,\dots,x_n)\in\R^n$, i.e.\\$\overline{\mb x} = \norm{\mb{x}}_{1} = \sum_{i=1}^n |x_i|$. It will be mainly used when $x$ is a multi-index of integers used for differentiation.
 \item $(x)_+$ means the positive part of $x\in\R$ defined by $(x)_+ = \max(x,0)$.
 \item {$\Re{z}$ denotes the real part of the complex $z\in \mathbb{C}$; we set $\mb{i}:=\sqrt{-1}$.}
 \item For any finite $L>0$ and $\mb{x} = (x_1,\dots,x_n)\in \R^n$, define the soft truncation function
\begin{equation}
\nonumber
\cT_L(\mb{x}) := (-L\vee x_1 \wedge L, \dots, -L \vee x_n \wedge L).
\end{equation}
{Moreover, if $L:\R^n\to \R^+$ is a measurable function, then the above definition of $\cT_L$ is extended by replacing $L$ by $L(\mb{x})$:
\begin{equation}
\label{eq:TL:2}
\cT_L(\mb{x}) := (-L(\mb{x})\vee x_1 \wedge L(\mb{x}), \dots, -L(\mb{x}) \vee x_n \wedge L(\mb{x})).
\end{equation}}
{ \item Let $\mb{j}$, $\mb{k} \in \N^d$ be two multi-indices.  $\mb{j} \leq \mbk$ means that $j_l\leq k_l$ for all $l=1,\ldots,d$.
 \item  The finite multi-indices set $\Lambda\subset \N^d$ is downward closed if $$(\mbk\in\Lambda \mbox{ and } \mb{j} \leq \mbk) \Rightarrow \mb{j} \in\Lambda.$$
 \item Let ${\Lambda}\subset\N^d$ be an arbitrary downward closed multi-index set and $f:\R^d\rightarrow\R$. $f$ is ${\Lambda}$-smooth if ${\partial^{\mb{n}}_x} f$ exist and are continuous for all $\mb n\in {\Lambda}$.
 }
\end{enumerate}
 

\section{Mathematical framework and basic properties}
\label{section:Mathematical framework and basic properties}
\subsection{Model}

\subsubsection{Dynamic Programming Equation}
From now on, we aim at solving the dynamic programming equation of the form \eqref{eq:MDP:intro:0a}-\eqref{eq:MDP:intro:0b}. First, for the sake of simplicity, we intentionally consider the case where the non-linearity does not depend on the gradient (the $Z$ component). Second, we consider the case of $N$ equidistant grid times of $[0,T]$, i.e. we set $t_{k}=k\Delta$ with $\Delta=T/N$. Third, the forward component $X$ is approximated by an Euler scheme with time-step $\Delta$, see \eqref{alg:markov:euler} below. These two last approximations are known to be accurate enough when data ($f,g,b,\sigma$) are sufficiently regular (see \cite{geis:geis:gobe:12} for a precise statement). To simplify notation, we write $X_{i}$ and $Y_{i}$  for an approximation of  $X_{t_i}$ and $Y_{t_{i}}$. 

All in all, our aim now is to solve 
\begin{align}
\label{eq:MDP:intro}
Y_i=\Esp{ g(X_N) +\sum_{j=i}^{N-1}f_j(X_j,Y_{j+1}) \Delta\mid \cF_\ti}
\quad \text{for }i \in \{N-1,\ldots,0\},
\end{align}
where $f_j(x,y):=f(t_j,x,y)$, $f$ being the driver in \eqref{eq:fbsde}. In other words, our subsequent scheme will approximate the solutions to 
\begin{align}\label{eq:edsWithoutZ}
\begin{split}
X_t & =  x + \int_{ 0} ^t b(s,X_s) \ds + \int_{0} ^t \sigma(s,X_s) \dW_s,\\ Y_t &=  \Esp{g(X_T) + \int_t^T f(s,X_s,Y_s) \ds\mid \cF_t}, 
\end{split}
\end{align}
and 
\begin{equation}
\label{eq:pdeWithoutZ}
\partial_{t}u(t,x)+{\cal A}u(t,x)+f(t,x,u(t,x))=0\ \text{ for $t<T$ and } u(T,.)=g(.).
\end{equation}

Our assumptions on the functions $b,\sigma,f, g$ are as follows.
\begin{description}
\item [\HFG] $g$ is a polynomially bounded measurable function from $\R^d$ to $\R$: there exist two finite non-negative constants $C_g$ and $\eta_g$ such that
\begin{align*}
|g(\mb{x})|&\leq C_g(1+|\mb{x}|^2)^{\eta_g/2}, \quad\forall \mb{x}\in \R^d.
\end{align*}
For every {$i<N$}, $f_i(\mb{x},y)$ is a measurable function $\R^d\times \R$ to $\R$, and 
{there exist three finite non-negative constants $L_f, C_f, \eta_f$ such that, for every $i<N$,}
\begin{align*}
|f_i(\mb{x},y)-f_i(\mb{x},y')|&\leq L_f |y-y'|, \quad\forall (\mb{x},y,y')\in \R^d\times(\R)^2,\\
|f_i(\mb{x},0)|&\leq C_f {(1+|\mb{x}|^2)^{\eta_f/2}}, \quad \forall \mb{x}\in \R^d.
\end{align*}
\item [\HBS]  {The drift coefficient $b:[0,T]\times \R^d\mapsto \R^d$ and the diffusion coefficient $\sigma:[0,T]\times \R^d\mapsto \R^d\otimes\R^q $
are bounded measurable functions.}
\end{description}
One important observation is that, due to the Markov property of the Euler scheme, for every $i$, there exist measurable deterministic functions $y_i : \R^d \to \R$, such that $$Y_i = y_i(X_i),$$ almost surely. See the justification in \cite[Section 4.1]{gobe:turk:2016}. 
Moreover, we have an uniform absolute bound for the function $y_i(\cdot)$, the proof is postponed in Section \ref{subsectionProof of Proposition prop:bound}.
\begin{proposition}
\label{prop:bound}
Assume \HFG\ and \HBS. Then, for any $\mb{x}\in \R^d$ and any $0\leq i \leq N$, we have
\begin{align}
\label{eq:prop:bound}
\quad |y_{i}(\mb{x})|    \le C_\eta(C_g +TC_f )e^{C_\eta L_f T}(1+|\mb{x}|^2)^{\frac{\eta_g\lor \eta_f}2}=:L^\star(\mb{x})
\end{align}
with 
\begin{align}
\label{eq:prop:bound:Ceta}
C_{\eta}:=\sup_{0\leq i\leq j\leq N,\mb{x}\in \R^d}\dfrac{\Esp{(1+|X_j|^2)^{\frac{\eta_g\lor \eta_f}2}\mid X_i=\mb{x}}}{(1+|\mb{x}|^2)^{\frac{\eta_g\lor \eta_f}2}}<+\infty.
\end{align}
\end{proposition}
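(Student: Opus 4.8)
Proof proposal. The plan is to obtain \eqref{eq:prop:bound} by a backward induction on $i$, combined with a discrete Gronwall argument, using the moment-stability constant $C_\eta$ of \eqref{eq:prop:bound:Ceta} as the only probabilistic ingredient. By the Markov property of the Euler scheme, the functions $y_i$ can be taken to satisfy the dynamic programming recursion \eqref{eq:MDP:intro} pointwise: for every $\mb{x}\in\R^d$,
\[
y_i(\mb{x}) = \Esp{g(X_N) + \sum_{j=i}^{N-1} f_j(X_j, y_{j+1}(X_{j+1}))\,\Delta \mid X_i = \mb{x}},
\]
where the Euler scheme is started from $\mb{x}$ at time $t_i$. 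Setting $\eta := \eta_g \lor \eta_f$ and $p(\mb{x}) := (1+|\mb{x}|^2)^{\eta/2}$, assumption \HFG\ gives $|g(\mb{x})| \le C_g\, p(\mb{x})$ and, via the Lipschitz bound and the triangle inequality, $|f_j(\mb{x},y)| \le C_f\, p(\mb{x}) + L_f |y|$ (here one uses $1+|\mb{x}|^2 \ge 1$ to majorize both $(1+|\mb x|^2)^{\eta_g/2}$ and $(1+|\mb x|^2)^{\eta_f/2}$ by $p(\mb x)$). Taking absolute values in the displayed identity then yields
\[
|y_i(\mb{x})| \le C_g\,\Esp{p(X_N)\mid X_i = \mb{x}} + \Delta\sum_{j=i}^{N-1}\Big( C_f\,\Esp{p(X_j)\mid X_i=\mb{x}} + L_f\,\Esp{|y_{j+1}(X_{j+1})|\mid X_i=\mb{x}}\Big).
\]

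Next I would run the induction: I claim $|y_j(\mb{x})| \le A_j\, p(\mb{x})$ for all $\mb{x}$ and all $j$, where $A_N := C_g$. Assuming this holds for indices $j+1,\dots,N$ (with $j\ge i$), the tower property, the hypothesis $|y_{j+1}(\cdot)|\le A_{j+1}\,p(\cdot)$, and the definition \eqref{eq:prop:bound:Ceta} of $C_\eta$ give $\Esp{|y_{j+1}(X_{j+1})|\mid X_i=\mb{x}} \le A_{j+1}\,\Esp{p(X_{j+1})\mid X_i=\mb{x}} \le A_{j+1} C_\eta\, p(\mb{x})$, and likewise $\Esp{p(X_j)\mid X_i=\mb{x}}\le C_\eta\, p(\mb{x})$ for $j\ge i$ (note $C_\eta \ge 1$, taking $i=j$ in \eqref{eq:prop:bound:Ceta}). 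Substituting into the last display and using $N\Delta = T$,
\[
|y_i(\mb{x})| \le \Big( C_\eta(C_g + C_f T) + L_f C_\eta \Delta \sum_{k=i+1}^{N} A_k \Big)\, p(\mb{x}) =: A_i\, p(\mb{x}),
\]
which also propagates the claim to index $i$. All conditional expectations appearing are finite because $C_\eta<+\infty$ (itself a consequence of the boundedness of $b,\sigma$ in \HBS), which legitimizes the manipulations.

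Finally I would solve the recursion $A_i \le B + c\sum_{k=i+1}^{N} A_k$ with $B := C_\eta(C_g + C_f T)$ and $c := L_f C_\eta \Delta$, noting $A_N = C_g \le B$ since $C_\eta\ge1$. A one-line induction (using the geometric sum $\sum_{m=0}^{N-i-1}(1+c)^m = ((1+c)^{N-i}-1)/c$) gives $A_i \le B(1+c)^{N-i} \le B(1+c)^N \le B\,e^{cN} = C_\eta(C_g + TC_f)\,e^{C_\eta L_f T}$, so $|y_i(\mb{x})| \le C_\eta(C_g + TC_f)\,e^{C_\eta L_f T}\,(1+|\mb{x}|^2)^{(\eta_g\lor\eta_f)/2} = L^\star(\mb{x})$, which is \eqref{eq:prop:bound}. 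The argument is essentially routine; the only points deserving care are the justification that the $y_i$ may be taken to satisfy the DPE pointwise (regular version of the conditional law of $X$) and the bookkeeping ensuring the final constant is exactly the one stated — the expected "hard part" is merely keeping the Gronwall constants aligned, there is no genuine analytic obstacle.
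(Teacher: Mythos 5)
Your proposal is correct and follows essentially the same route as the paper's proof: a backward induction bounding $|y_i(\mb{x})|$ by $A_i(1+|\mb{x}|^2)^{(\eta_g\lor\eta_f)/2}$ via the growth/Lipschitz bounds of \HFG\ and the constant $C_\eta$, followed by a (discrete) Gronwall argument yielding exactly the constant $C_\eta(C_g+TC_f)e^{C_\eta L_f T}$. The only cosmetic differences are that the paper reduces to $\eta_f=\eta_g$ at the outset and invokes Gronwall without writing out the geometric-sum step, which you make explicit.
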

A second crucial observation is that the value functions $y_i(\cdot)$ are independent of how we initialize the forward component. Our subsequent algorithm takes advantage of this observation. For instance,  let $\Xii$ be a random variable in $\R^d$ with some distribution $\nu$ (more details on this to follow) and let $\Xij$ be the Euler scheme evolution of $X_{j}$ starting from $X_{i}$; it writes
\begin{equation}
\label{alg:markov:euler}
\Xijp  = \Xij + b(\tj,\Xij)\Delta + \sigma (\tj, \Xij) (W_{\tjp}-W_{\tj}), \quad j\geq i.
\end{equation}
This flexibility property w.r.t. the initialization then writes
\begin{align}
 \label{eq:MDP:fcs}
y_i(\Xii)&:=\Esp{  g(\XiN) +\sum_{j=i}^{N-1}f_j\left(\Xij,y_{j+1}(\Xijp)\right) \Delta \mid \Xii}.
\end{align}
Approximating the solution to \eqref{eq:MDP:intro} is actually  achieved by approximating the functions $y_i(\cdot)$. In this way, we are directly approximating the solution to the semi-linear PDE \eqref{eq:pdeWithoutZ}. For some reasons that will appear clear later, we define a weighted modification of $y_i$ by 
\begin{equation}\yq_i(\mbx)=\dfrac{y_i(\mbx)}{(1+|\mbx|^2)^{q/2}}
\label{eq:yqi}\end{equation}
 for a damping exponent $q\geq 0$. For $q=0$, $\yqi$ and $y_{i}$ coincide. The previous DPE \eqref{eq:MDP:fcs} becomes
\begin{align}
 \label{eq:MDP:fcs:q}
\yqi(\Xii)&:=\E\Bigg[  \dfrac{g(\XiN)}{ (1+|\Xii|^2)^{q/2}} \\
\nonumber&\qquad\quad+\sum_{j=i}^{N-1}\dfrac{f_j\left(\Xij,\yqjp(\Xijp)(1+|\Xijp|^2)^{q/2}\right) }{(1+|\Xii|^2)^{q/2}}\Delta
\mid \Xii\Bigg].
\end{align}
The introduction of the polynomial factor $(1+|\Xii|^2)^{q/2}$ gives higher flexibility in the error analysis, it 
ensures that $\yqi$ decreases faster at infinity, which will provide nicer estimates on the approximation error when dealing with Fourier-basis, see Section \ref{sec:approError}.

\subsubsection{Randomizing the initial value $\Xii$}
For the subsequent algorithm, $X_i$ will be sampled randomly according to the distribution $\nu$, which we list the properties below. 
\begin{description}
\item [\Hnu] The distribution $\nu(\dxx)$ of $X_i^i$ has a density with respect to the Lebesgue measure on $\R^d$,  of the form
\begin{equation}
\nu(\mb{x}) = \prod_{l=1}^d \nu_{l}(x_{l}), \quad \mb{x}=(x_1,\dots,x_d)\in \R^d,
\label{eq:hnu}
\end{equation}
for some probability density functions $\nu_{l}$, which are assumed to be strictly positive.\\
The function $\mb{x}\in \R^d\mapsto \dfrac{L^\star(\mb{x})}{ (1+|\mb{x}|^2)^{q/2}}$ (where $L^\star$ is defined in \eqref{eq:prop:bound}) is square-integrable on $\R^d$ with respect to $\nu(\mb{x})\dd\mb{x}$.\\ 
We denote by $\Fnul$ the cumulative distribution function (CDF) of the one-dimensional marginal of $\nu_{l}$, $$\Fnul(x) := \int_{-\infty}^x \nu_{l}(x')\dx',$$ and by $\Fnulinv{l}$ its inverse {(which is standardly defined since $\Fnul$ is continuous and strictly increasing).}
\end{description}
Note that the above integrability condition ensures that $\yqi$ (for any $i$) is square integrable on $\R^d$ with respect to $\nu(\mb{x})\dd\mb{x}$.\\
In this work, we deal essentially with one type of distribution.
%
%

\begin{example}[Student's t-distribution]\label{example:student}
We deal with the non-standardized Student's t-distribution on $\R^d$ with parameter $\mu>0$ defined by its probability density function of the form \eqref{eq:hnu} with
\begin{align}
\nu_{l}(x_{l})=\dfrac{c_\mu}{(1+x_l^2)^{(\mu+1)/2}}, \label{eq:studentDensity}
\end{align}
where $c_\mu \in\R^+$.
This is the density of $ X/\sqrt{\mu}$ where $X$ is a standard Student's t-distribution with  $\mu$ degrees of freedom. It leads to numerous ways to simulate $\nu_l$, as acceptance rejection or ratio-of-uniforms methods, for example.
From \cite[p.~507]{jackman:2009} we know that 
$$c_\mu=\dfrac{\Gamma((\mu+1)/2)\sqrt \mu}{\Gamma(\mu/2)\sqrt{\pi \mu}}=\dfrac{\Gamma((\mu+1)/2)}{\Gamma(\mu/2)\sqrt{\pi }},
$$
where $\Gamma$ is the Gamma function defined in \cite[p.~501]{jackman:2009}.
Note that the marginal CDF and its inverse are not explicit in general for the Student's t-distribution, however their values (for usual parameter $\mu$) are tabulated in most of books in statistics and computable using standard software libraries. Moreover, a priori estimates on $\Fnul$, $\Fnulinv{l}$ and $\dnFnulinvapp{n}{l}$, which will be used in the error analysis of the algorithm, are computed in the Appendix \ref{appendix:estimatesStudent}.

Note that the square integrability of $\mb{x}\in \R^d\mapsto \dfrac{L^\star(\mb{x})}{ (1+|\mb{x}|^2)^{q/2}}$ (see Assumption \Hnu) is guaranteed by the condition $\mu  > \eta_g\lor \eta_f-q$.

\end{example}

The distribution we have considered so far has mean zero, this is suitable for problems where 0 has a natural interpretation as the center in the application at hand. In order to solve the problem far away from the neighborhood of zero we should  allow non zero means (say $\bar x\in \R^d$): this is easy to handle by shifting the above distribution by $\bar x$. For instance, in the case of the Student's t-distribution (Example \ref{example:student}), we would take
\begin{equation}
\nu_{l}(x_{l})=c_\mu (1+(x_l-\bar x_l)^2)^{-(\mu+1)/2}. \label{eq:studentDensity2}
\end{equation}
Since this centering adjustment is quite obvious, we mainly discuss the case of Example \ref{example:student} in what follows.\medskip

More importantly, the probability measure $\nu(\dd \mb{x})$ is expected to satisfy another assumption stated below.

\begin{description}
\item [$\bf H_{\rho}(q)$] $\exists c_\nu>0$ (depending on $q$, $b$, $\sigma$, $\nu$, $T$) such that  for any measurable function $\gamma:\R^d\rightarrow \R$ with $\gamma\in L^2_{\nu}(\R^d)$, any $N\geq j\geq i\geq 0$, we have \begin{equation}
 \Esp{\dfrac{|\gamma(\Xij)|^2(1+|\Xij|^2)^{q}}{(1+|\Xii|^2)^{q} }}\leq c_\nu \Esp{|\gamma(X^i_i)|^2}.\label{eq:prop:uses}
 \end{equation}
\end{description}
This is a stability property about expectation of weighted functionals of  the Markov chain $(X^{i}_j)_{i\le j\le N}$. This property will be crucial for analyzing the propagation of error in the quasi-regression scheme (see the proof in Section \ref{section:error}).  We now illustrate this property in relation with previous Example \ref{example:student}, the proof is postponed to Section \ref{subsection Proof of Theorem theo:uses}.

\begin{theorem}[Norm-stability] \label{theo:uses}
Assume that $X$ is  the Euler scheme \eqref{alg:markov:euler} with time step $\Delta$ of  a SDE \eqref{eq:edsWithoutZ} which drift coefficient $b:[0,T]\times \R^d\mapsto \R^d$ is bounded measurable, and which diffusion coefficient $\sigma:[0,T]\times \R^d\mapsto \R^d\otimes\R^q $ is bounded measurable and $\eta$-H\"older continuous in space uniformly in time (for some $\eta\in (0,1])$, 
$$\sup_{0\leq t \leq T}\sup_{\mb{x}\neq \mb{x'}}\frac{|\sigma(t,\mb{x})-\sigma(t,\mb{x'})|}{|\mb{x}-\mb{x'}|^\eta }<+\infty.$$
In addition, we assume that $\sigma$ fulfills a uniform ellipticity condition: 
$$\inf_{0\leq t \leq T, \mb{x}\in \R^d}\ \inf_{\zeta\in \R^d s.t. |\zeta|=1}\zeta.\sigma\sigma^\top(t,\mb{x})\zeta >0.$$
Let   $X^i_i$ be sampled from distribution $\nu$ in the case of {Student's t}-distribution (Example \ref{example:student}): then $\bf H_{\rho}(q)$ holds for any $q\geq 0$.
 \end{theorem}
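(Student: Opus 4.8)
The plan is to establish the weighted stability bound \eqref{eq:prop:uses} by reducing it to a pointwise (conditional) estimate on the transition kernel of the Euler scheme, and then exploit the explicit polynomial-tail structure of the Student's t-density. First I would rewrite the left-hand side of \eqref{eq:prop:uses} by conditioning on $X^i_i = \mb{x}$: since $\Xii \sim \nu$,
\begin{align*}
\Esp{\dfrac{|\gamma(\Xij)|^2(1+|\Xij|^2)^{q}}{(1+|\Xii|^2)^{q}}} = \int_{\R^d} \dfrac{\nu(\mb{x})}{(1+|\mb{x}|^2)^{q}}\, \Esp{|\gamma(\Xij)|^2(1+|\Xij|^2)^{q}\mid \Xii = \mb{x}}\, \dxx.
\end{align*}
So it suffices to control $\Esp{|\gamma(\Xij)|^2(1+|\Xij|^2)^{q}\mid \Xii = \mb{x}}$ and show that, after integration against $\nu(\mb{x})(1+|\mb{x}|^2)^{-q}\dxx$, one recovers $c_\nu\int |\gamma(\mb{y})|^2 \nu(\mb{y})\dd\mb{y} = c_\nu\Esp{|\gamma(X^i_i)|^2}$.

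The key analytic input is a Gaussian-type upper bound on the density of $\Xij$ given $\Xii=\mb{x}$. Under the stated hypotheses on $b$ and $\sigma$ (bounded, $\eta$-Hölder diffusion, uniform ellipticity), the Euler scheme transition density $p_{i,j}(\mb{x},\mb{y})$ over $j-i \le N$ steps satisfies an Aronson-type estimate: there are constants $c_1, c_2 > 0$, uniform in $i\le j$ and in $\Delta$ (this uniformity is where the boundedness of $b,\sigma$ and the ellipticity of $\sigma\sigma^\top$ enter, via e.g.\ the results of Gobet--Labart or Menozzi and coauthors on Euler-scheme density bounds), such that
\begin{align*}
p_{i,j}(\mb{x},\mb{y}) \le \dfrac{c_1}{(t_j - t_i)^{d/2}}\exp\!\left(-c_2\dfrac{|\mb{y} - \mb{x}|^2}{t_j - t_i}\right), \qquad j > i,
\end{align*}
with the convention that the case $j=i$ is trivial. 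Using this, $\Esp{|\gamma(\Xij)|^2(1+|\Xij|^2)^q\mid \Xii=\mb{x}} \le \int_{\R^d} |\gamma(\mb{y})|^2(1+|\mb{y}|^2)^q\, p_{i,j}(\mb{x},\mb{y})\,\dd\mb{y}$. Substituting into the displayed integral and applying Fubini, I would be left with bounding, for each fixed $\mb{y}$,
\begin{align*}
(1+|\mb{y}|^2)^q \int_{\R^d} \dfrac{\nu(\mb{x})}{(1+|\mb{x}|^2)^q}\, p_{i,j}(\mb{x},\mb{y})\, \dxx \le c_\nu\, \nu(\mb{y}),
\end{align*}
i.e.\ a reverse-type comparison showing that the $\nu$-average of the heat kernel, reweighted by $(1+|\mb{x}|^2)^{-q}$, is dominated by $c_\nu(1+|\mb{y}|^2)^{-q}\nu(\mb{y})$. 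This is precisely where the polynomial (rather than Gaussian) tails of the Student's t-density are used: $\nu(\mb{x}) = \prod_l c_\mu(1+x_l^2)^{-(\mu+1)/2}$ decays only polynomially, so $\nu(\mb{x})/(1+|\mb{x}|^2)^q$ is, up to constants, comparable to $\nu(\mb{y})/(1+|\mb{y}|^2)^q$ on the region $|\mb{x}-\mb{y}| \lesssim \sqrt{t_j-t_i}$ that carries the Gaussian mass, while the Gaussian factor kills the contribution from $|\mb{x}-\mb{y}|$ large. Concretely I would split the $\mb{x}$-integral into $\{|\mb{x}| \le 2|\mb{y}|\}$ and $\{|\mb{x}| > 2|\mb{y}|\}$; on the first region $(1+|\mb{y}|^2)^q(1+|\mb{x}|^2)^{-q} \le (1+|\mb{y}|^2)^q \le$ (something absorbed by noting $\nu(\mb x)\le \nu(0)$ and the Gaussian integrates to a $t$-independent constant times the small-ball probability, or more carefully by comparing $\nu(\mb x)$ with $\nu(\mb y)$ using $1+|\mb x|^2 \le C(1+|\mb y|^2)$); on the second region $|\mb{x}-\mb{y}| \ge |\mb{x}|/2 \ge |\mb{y}|$, so the Gaussian factor $\exp(-c_2|\mb{x}-\mb{y}|^2/(t_j-t_i))$ provides super-polynomial decay in $|\mb{x}|$ that dominates both $(1+|\mb x|^2)^{-q}$-type growth and makes the integral finite and bounded by a constant multiple of $\nu(\mb y)(1+|\mb y|^2)^{-q}$, uniformly in $t_j-t_i \in (0,T]$.

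The main obstacle, and the step needing the most care, is obtaining the Aronson-type density bound for the \emph{Euler scheme} (a discrete-time chain) with constants that are \emph{uniform in the time step $\Delta = T/N$} — for the continuous-time diffusion this is classical, but the discrete version requires either a convolution/chain argument over the $j-i$ steps or a direct parametrix-type expansion à la Konakov--Mammen / Menozzi; the $\eta$-Hölder continuity of $\sigma$ (rather than Lipschitz) and the absence of ellipticity-uniformity pitfalls must be handled there. A secondary technical point is that $\gamma \in L^2_\nu$ only, so all manipulations must stay at the $L^2$ level and the final constant $c_\nu$ must be shown to depend only on $q, b, \sigma, \nu, T$ (through $c_1, c_2, d, \mu$, and $\sup_t\|b\|_\infty$, etc.) and not on $i, j, N$ or on $\gamma$, which the above argument delivers since every bound obtained is uniform in those parameters.
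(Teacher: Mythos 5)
Your overall architecture is the same as the paper's: rewrite the left-hand side of \eqref{eq:prop:uses} with the transition kernel of the Euler scheme, invoke an Aronson-type Gaussian upper bound for that kernel (the paper does not reprove it but cites \cite[Theorem 2.1]{lema:meno:10}, so your ``main obstacle'' is available off the shelf under exactly the stated boundedness/H\"older/ellipticity hypotheses), and reduce everything to the pointwise comparison
$(1+|\mb{y}|^2)^q\int \nu(\mb{x})(1+|\mb{x}|^2)^{-q}\,p_{i,j}(\mb{x},\mb{y})\,\dd\mb{x}\leq c_\nu\,\nu(\mb{y})$, uniformly in the time increment. This is precisely the paper's inequality \eqref{eq:rho:1}. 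Two side remarks: since all integrands are nonnegative, Tonelli justifies your Fubini step without the paper's preliminary truncation to bounded $\gamma$ (the paper truncates mainly to have both sides finite), and the constant bookkeeping you describe is fine.

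The genuine gap is in the proof of that pointwise comparison, which is where the Student structure must actually be used. Your split $\{|\mb{x}|\leq 2|\mb{y}|\}$ versus $\{|\mb{x}|>2|\mb{y}|\}$ does not work as sketched: on the near region the weight ratio $(1+|\mb{y}|^2)^q(1+|\mb{x}|^2)^{-q}$ is \emph{not} bounded (take $\mb{x}$ close to $0$), and neither of your proposed repairs closes it. Bounding $\nu(\mb{x})\leq\nu(0)$ leaves you with $(1+|\mb{y}|^2)^q\nu(0)$, which is far larger than $\nu(\mb{y})\asymp |\mb{y}|^{-d(\mu+1)}$ for $|\mb{y}|$ large; and the comparison ``$\nu(\mb{x})\leq C\nu(\mb{y})$ because $1+|\mb{x}|^2\leq C(1+|\mb{y}|^2)$'' is false for the \emph{product} Student density: for $d\geq 2$ one can have $|\mb{x}|\asymp|\mb{y}|$, even $|\mb{x}-\mb{y}|\leq|\mb{y}|/2$, with one coordinate $x_l$ near $0$ while $y_l$ is of order $|\mb{y}|$, in which case $\nu(\mb{x})/\nu(\mb{y})$ grows polynomially in $|\mb{y}|$. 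The smallness in that configuration comes only from the Gaussian factor \emph{in that particular coordinate}, $e^{-c|x_l-y_l|^2/(t_j-t_i)}$, and extracting it requires a finer argument than your global split. The paper's proof does this by splitting instead on $\{|\mb{x}-\mb{y}|\leq|\mb{y}|/2\}$, where the weight ratio is bounded by $4^q$, and then factorizing the remaining integral coordinate by coordinate, splitting each one-dimensional integral on $\{|z_l|\sqrt{\lambda}\leq|y_l|/2\}$ and using the normal tail $\mathcal{N}(-|y_l|/(2\sqrt{\Lambda}))$ to recover the per-coordinate bound $C(1+|y_l|)^{-(\mu+1)}$; the far region is then killed by the Gaussian as in your sketch. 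Without this coordinate-wise treatment (or an equivalent device), the key inequality — and hence $\bf H_{\rho}(q)$ — is not established for $d\geq 2$.
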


\subsection{Approximation spaces and basis functions}
The aim of this section is to define some proper basis functions which satisfy orthogonality properties in $\R^d$ and which span some $L^2$ space, see Proposition \ref{proposition:base}. It turns out that the choice of measure for defining the $L^2$ space has to coincide with the sampling measure of $\Xii$ defined in \Hnu: this plays an important role for designing Algorithm \ref{alg:grmdp}. Our strategy for defining such basis functions is to start from trigonometric basis on $[0,1]^d$ and then to apply appropriate changes of variable: later, this transform will allow to easily quantify the approximation error when truncating the basis, see Propositions \ref{proposition:approError1} and \ref{proposition:approErrord}.

\paragraph{Notation.}
We start by setting notations to deal with appropriate $L^2$ space on $\R^d$ or $[0,1]^d$.
\begin{enumerate}[(i)]
 \item Let $\nu:\R^d\rightarrow \R$ be a probability density satisfying \Hnu. We will consider the space of measurable functions 
 \begin{align*}
 \LNU &:= \left\{ v: \R^d \rightarrow \R \mbox{ such that } \displaystyle\int_{\R^d} v^2 \mbox{ } \nu \dxx < \infty \right\},\\
 \LU &:= \left\{ w: [0,1]^d \rightarrow \R \mbox{ such that } \displaystyle\int_{[0,1]^d} w^2 \mbox{ } \duu < \infty \right\}.
 \end{align*}
 In these spaces we define the inner products,
\begin{align*}(v_1,v_2)_{\LNU} &= \displaystyle \int_{\R^d} v_1 v_2 \mbox{ } \nu \dxx, \quad \forall v_1,v_2 \in \LNU,\\(w_1,w_2)_{\LU} &= \displaystyle \int_{[0,1]^d} w_1 w_2 \mbox{ } \duu, \quad \forall w_1,w_2 \in \LU,
\intertext{and the norms,}
\norm{v}_{\LNU} &= \sqrt{(v,v)_{\LNU}} = \left(\displaystyle\int_{\R^d} v^2\mbox{ } \nu \dxx \right)^{\frac{1}{2}}, \quad \forall v \in \LNU,\\
\norm{w}_{\LU} &= \sqrt{(w,w)_{\LU}} = \left(\displaystyle\int_{[0,1]^d} w^2\mbox{ } \duu \right)^{\frac{1}{2}}, \quad \forall w \in \LU.
\end{align*}
\end{enumerate}

Before the general multidimensional case, we start with an easy computation in dimension 1.

\begin{lemma}[Orthonormal basis functions in dimension 1]  \label{lemma:ortho:dim1}Assume that $\nu$ satisfies \Hnu\ in dimension $d=1$.
Let $x\in\R$ and $k\in\N$. The basis functions
\begin{equation} \label{eq:nu_basis_functions}
\phi_k(x):= \left\{ \begin{array}{lcc}
             1, &   \mbox{if}  & k = 0, \\
             \\ \sqrt{2} \cos\left(k \pi \Fnuo(x)\right), &  \mbox{if}  & k > 0,
             \end{array}
   \right. 
\end{equation}
are orthonormal in $\R$ with respect to the probability density weight $\nu=\nu_{1}$.
\end{lemma}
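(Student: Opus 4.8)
The plan is to verify the two defining properties of an orthonormal system directly: that each $\phi_k$ has unit $L^2_{\nu_1}(\R)$-norm, and that distinct $\phi_k$, $\phi_m$ are orthogonal in $L^2_{\nu_1}(\R)$. The essential tool is the change of variable $u = \Fnuo(x)$, which is legitimate because under \Hnu\ the density $\nu_1$ is strictly positive, so $\Fnuo$ is a $C^1$ strictly increasing bijection from $\R$ onto $(0,1)$ with $\dd u = \nu_1(x)\,\dd x$. Under this substitution, for any measurable $G:[0,1]\to\R$ one has $\int_{\R} G(\Fnuo(x))\,\nu_1(x)\,\dd x = \int_0^1 G(u)\,\dd u$, which transports all the required integrals to the standard Lebesgue space $L^2([0,1])$.

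First I would treat $k=0$: here $\phi_0\equiv 1$, so $\int_{\R}\phi_0^2\,\nu_1\,\dd x = \int_{\R}\nu_1(x)\,\dd x = 1$, giving unit norm. Next, for the orthogonality of $\phi_0$ against $\phi_m$ with $m>0$, the change of variable gives $\int_\R \phi_0\phi_m\,\nu_1\,\dd x = \sqrt2\int_0^1\cos(m\pi u)\,\dd u = \sqrt2\,[\sin(m\pi u)/(m\pi)]_0^1 = 0$. For $k,m>0$, the substitution reduces $\int_\R \phi_k\phi_m\,\nu_1\,\dd x$ to $2\int_0^1\cos(k\pi u)\cos(m\pi u)\,\dd u$; using the product-to-sum identity $2\cos a\cos b = \cos(a-b)+\cos(a+b)$ this integral equals $\int_0^1[\cos((k-m)\pi u)+\cos((k+m)\pi u)]\,\dd u$, which is $1$ when $k=m$ (the first term integrates to $1$, the second to $0$ since $k+m>0$) and $0$ when $k\neq m$ (both terms integrate to $0$). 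This establishes $(\phi_k,\phi_m)_{L^2_{\nu_1}(\R)} = \delta_{k,m}$, i.e. orthonormality.

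There is no serious obstacle here: the statement is essentially the observation that $\{1,\sqrt2\cos(k\pi u)\}_{k\ge1}$ is the classical orthonormal cosine basis of $L^2([0,1])$, pulled back through the measure-preserving map $\Fnuo$. The only point that warrants a word of care is the justification of the change of variable — i.e., that $\Fnuo$ is an absolutely continuous increasing bijection onto $(0,1)$ with the stated differential, and that the boundary values $\Fnuo(-\infty)=0$, $\Fnuo(+\infty)=1$ hold — but this is immediate from \Hnu\ (strict positivity and integrability of $\nu_1$). I would simply state this substitution explicitly and then carry out the three elementary trigonometric integrals above.
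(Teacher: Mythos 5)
Your proof is correct and follows essentially the same route as the paper: the change of variable $u = \Fnuo(x)$, justified by \Hnu, transports everything to the classical orthonormal cosine system on $[0,1]$. The only difference is that you verify the trigonometric integrals explicitly where the paper simply cites the standard orthonormality of the cosine functions, which is an immaterial distinction.
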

\begin{proof}
 Let $i,j\in\N$. It is sufficient to see that
 \begin{align*}
  \displaystyle\int_{-\infty}^{\infty} \cos\left(i \pi  \Fnuo(x)\right) \cos\left(j \pi  \Fnuo(x)\right) \nu_{1}(x) \dx = \displaystyle\int_{0}^{1} \cos(i \pi u) \cos(j \pi u) \du,
\end{align*}
where we have used the change of variable $u = \Fnuo(x)$.
Clearly the cosine basis functions
\begin{equation} \label{eq:one_basis_functions}
C_k(u)= \left\{ \begin{array}{lcc}
             1, &   \mbox{if}  & k = 0, \\
             \\ \sqrt{2} \cos(k \pi u), &  \mbox{if}  & k > 0,
             \end{array}
   \right. 
\end{equation}
are orthonormal in $[0,1]$ with respect to the Lebesgue measure, therefore $(\phi_i,\phi_j)_{L^2_\nu(\R)}=\1_{i=j}$.\qed \end{proof}
Note that $(C_k \circ \Fnuo)(x)  = \phi_k(x)$ and $(\phi_k \circ \Fnuo^{-1})(u) = C_k(u)$. This change of variable will be useful in order to analyze the truncation error of $\nu_{1}$ series in terms of the truncation error of Fourier cosine series (see Section \ref{sec:approError}). 

Now that the reader gets acquainted with the one-dimensional case, we move to the more general result for any dimension $d\geq 1$. First, the natural way to build an orthonormal multidimensional set of basis functions is to take tensor products of orthonormal one-dimensional basis functions. To this end we need to add the superscript $(l)$ to the notations of the basis functions in dimension 1. Therefore, from now on, $\phi^{(l)}_{k_l}$ and $C^{(l)}_{k_l}$ stand for the one-dimensional basis functions \eqref{eq:nu_basis_functions} and \eqref{eq:one_basis_functions} in the  $l$-th coordinate, respectively. For any multiindex $\mb{k} = (k_1,\ldots,k_d)$, set
\begin{align}\label{eq:phi:dim:d}
\phi_{\mb{k}}(\mb{x})&:= \prod_{l=1}^{d} \phi_{k_l}^{(l)}(x_l), \quad  \mb{x} := (x_1,\ldots,x_d)\in \R^d,\\
C_{\mb{k}}(\mb{u})&:= \prod_{l=1}^{d} C_{k_l}^{(l)}(u_l), \quad  \mb{u} := (u_1,\ldots,u_d)\in [0,1]^d.
\label{eq:C:dim:d}
\end{align}
The transformation function $F_{\nu}: \R^d \rightarrow [0,1]^d$ and its inverse are defined by
$$F_{\nu}(\mb{x}) := (\Fnuo(x_1),\ldots,\Fnuo(x_d)), \qquad F_{\nu}^{-1}(\mb{u}) := (\Fnuoinv(u_1),\ldots,\Fnudinv(u_d)).$$ Therefore, from the above and from \eqref{eq:phi:dim:d}-\eqref{eq:C:dim:d}, 
we readily check the relations
\begin{align}\label{eq:relation:1}
(C_{\mb{k}} \circ F_{\nu})(\mb{x})&:= \displaystyle\prod_{l=1}^{d} (C_{k_l}^{(l)} \circ F_{\nu_l})(x_l)= \phi_{\mb{k}}(\mb{x}),\\
(\phi_{\mb{k}} \circ F^{-1}_{\nu})(\mb{u})&:= \displaystyle\prod_{l=1}^{d} (\phi_{k_l}^{(l)} \circ F^{-1}_{\nu_l})(u_l)= C_{\mb{k}}(\mb{u}).\label{eq:relation:2}
\end{align}
To analyze orthogonality properties, we  repeatedly use the following lemma, which proof is quite straightforward and  left to the reader (use the same change-of-variable arguments as in the proof of Lemma \ref{lemma:ortho:dim1}).
\begin{lemma} \label{lemma:eqInnerNorms} Assume $\nu$ satisfies \Hnu. Then, for all functions $v_1,v_2 \in \LNU$, we have the following identities
$$(v_1,v_2)_{\LNU} = (v_1\circ F_{\nu}^{-1},v_2\circ F_{\nu}^{-1})_{\LU},\qquad\norm{v_1}_{\LNU} = \norm{v_1 \circ F_{\nu}^{-1}}_{\LU}.$$
\end{lemma}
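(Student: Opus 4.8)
The plan is to reduce both identities to one multidimensional change of variables, in the same spirit as the one-dimensional computation in the proof of Lemma~\ref{lemma:ortho:dim1}. The first step is to observe that, under \Hnu, each marginal CDF $\Fnul$ is continuously differentiable with $\Fnul' = \nu_l > 0$ on all of $\R$, hence is a strictly increasing $C^1$ bijection from $\R$ onto $(0,1)$ with $C^1$ inverse $\Fnulinv{l}$. Since $F_\nu$ acts coordinatewise by $x_l\mapsto \Fnul(x_l)$, it is therefore a $C^1$ diffeomorphism of $\R^d$ onto $(0,1)^d$, and its Jacobian matrix is diagonal with determinant $\prod_{l=1}^d \nu_l(x_l)$; by the product form \eqref{eq:hnu} of the density, this determinant is exactly $\nu(\mb{x})$.

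The second step is to apply the change of variables $\mb{u} = F_\nu(\mb{x})$, equivalently $\mb{x} = F_\nu^{-1}(\mb{u})$, under which $\duu = \nu(\mb{x})\,\dxx$ and $(0,1)^d$ is swept out (and $(0,1)^d$ differs from $[0,1]^d$ only by a Lebesgue-null set). For any non-negative measurable $w$ on $[0,1]^d$ this gives $\int_{[0,1]^d} w\,\duu = \int_{\R^d} (w\circ F_\nu)\,\nu\,\dxx$. Applying this with $w = (v_1\circ F_\nu^{-1})^2$ shows simultaneously that $v_1\circ F_\nu^{-1} \in \LU$ if and only if $v_1 \in \LNU$ — so that all four quantities in the statement are well defined — and, together with the Cauchy--Schwarz bound $\int_{\R^d}|v_1 v_2|\,\nu\,\dxx \le \norm{v_1}_{\LNU}\norm{v_2}_{\LNU} < \infty$, that the change of variables may also be applied to the signed integrand $(v_1\circ F_\nu^{-1})(v_2\circ F_\nu^{-1})$ (splitting into positive and negative parts, both integrable). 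This yields $(v_1\circ F_\nu^{-1}, v_2\circ F_\nu^{-1})_{\LU} = \int_{\R^d} v_1 v_2\,\nu\,\dxx = (v_1,v_2)_{\LNU}$, which is the first identity; the norm identity follows by taking $v_2 = v_1$ and square roots.

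There is no genuine obstacle here: the only points deserving a line of justification are that $F_\nu$ is a true diffeomorphism with Jacobian $\nu$ — which relies precisely on the strict positivity of the densities $\nu_l$ assumed in \Hnu\ — and the integrability of $v_1 v_2$ against $\nu$ needed to apply the change of variables to a signed function, which is immediate from Cauchy--Schwarz since $v_1, v_2 \in \LNU$. This is exactly why the lemma can safely be quoted as having a straightforward proof.
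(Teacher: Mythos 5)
Your proof is correct and follows exactly the route the paper intends: the paper leaves this lemma to the reader with the hint to reuse the change of variable $\mb{u}=F_\nu(\mb{x})$ from the proof of Lemma \ref{lemma:ortho:dim1}, which is precisely your coordinatewise substitution with Jacobian $\prod_l \nu_l(x_l)=\nu(\mb{x})$, plus the (correct) integrability remark via Cauchy--Schwarz. One small caveat: \Hnu\ does not assume the densities $\nu_l$ are continuous, so $F_\nu$ need not be a $C^1$ diffeomorphism as you assert; the identity is safer phrased as the probability integral transform (the image of $\nu(\mb{x})\dxx$ under the continuous, strictly increasing map $F_\nu$ is the uniform measure on $(0,1)^d$), which gives the same conclusion without extra regularity.
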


As a consequence of Lemma \ref{lemma:eqInnerNorms} and Equality \eqref{eq:relation:2}, since the tensored cosine functions $(C_{\mb{k}}(.):\mb{k}\in \N^d)$ are orthonormal in $[0,1]^d$ with respect to the Lebesgue measure, it implies that  $(\phi_{\mb{k}}(.):\mb{k}\in \N^d)$ are orthonormal in $\R^d$ with respect to the probability measure $\nu(\dd \mb{x})$. Moreover, the cosine functions $(C_{\mb{k}}(.):\mb{k}\in \N^d)$ form a complete family in $\LU$: this is a standard result in Fourier analysis (observe that here, the sine functions are not needed to get a complete basis since the scalar product $(.,.)_{\LU}$ is defined on $[0,1]^d$ and not $[-1,1]^d$). We summarize our findings in a proposition ready to be used.
\begin{proposition} \label{proposition:base}Assume $\nu$ satisfies \Hnu. The family of functions $(\phi_{\mb{k}}(.):\mb{k}\in \N^d)$ forms a complete orthonomal basis of $\R^d$ with respect to the measure  $\nu$.
\end{proposition}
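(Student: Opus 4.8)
The plan is to transport the whole statement to the cube $[0,1]^d$ via the change of variables $F_\nu$ and then invoke the classical completeness of the tensored cosine system there, so that orthonormality and completeness in $\LNU$ are simply pulled back through an isometric isomorphism.

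First I would record the elementary consequences of \Hnu. Since each $\nu_l$ is a strictly positive probability density, the marginal CDF $\Fnul$ is a continuous, strictly increasing bijection from $\R$ onto $(0,1)$, hence $F_\nu:\R^d\to(0,1)^d$ is a bi-measurable bijection with inverse $F^{-1}_\nu$, and the pushforward of $\nu(\dxx)$ by $F_\nu$ is exactly the Lebesgue measure on $[0,1]^d$ (the boundary being Lebesgue-null is harmless). Consequently the linear map $T\colon v\mapsto v\circ F_\nu^{-1}$ is a well-defined isometry from $\LNU$ onto $\LU$: the isometry property is precisely Lemma \ref{lemma:eqInnerNorms}, while surjectivity holds because $w\mapsto w\circ F_\nu$ is a two-sided inverse of $T$. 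Thus $T$ is an isometric isomorphism of Hilbert spaces, and any orthonormality/completeness assertion in $\LNU$ is equivalent to the corresponding one for the images in $\LU$.

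Next, orthonormality: by relation \eqref{eq:relation:2} we have $T\phi_{\mbk}=C_{\mbk}$ for every $\mbk\in\N^d$. The one-dimensional cosine functions $C_k$ of \eqref{eq:one_basis_functions} are orthonormal in $L^2([0,1])$ for the Lebesgue measure, so by Fubini their tensor products $C_{\mbk}$ of \eqref{eq:C:dim:d} are orthonormal in $\LU$; applying $T^{-1}$ gives $(\phi_{\mbk},\phi_{\mbk'})_{\LNU}=(C_{\mbk},C_{\mbk'})_{\LU}=\1_{\mbk=\mbk'}$. For completeness, it suffices to show $(C_{\mbk}:\mbk\in\N^d)$ is complete in $\LU$: indeed, given $v\in\LNU$ with $(v,\phi_{\mbk})_{\LNU}=0$ for all $\mbk$, the function $Tv\in\LU$ satisfies $(Tv,C_{\mbk})_{\LU}=0$ for all $\mbk$, hence $Tv=0$ Lebesgue-a.e. and therefore $v=0$ $\nu$-a.e. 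Completeness of the cosine family on $[0,1]$ is classical (pass to the even $2$-periodic extension and use completeness of the trigonometric system, or invoke Stone--Weierstrass plus density of continuous functions in $\LU$), and the $d$-dimensional statement follows by the standard tensorization argument for complete orthonormal systems in a product $L^2$-space.

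I do not expect any serious obstacle here; the only step deserving a little care is the measure-transport claim --- that $F_\nu$ is a genuine bi-measurable bijection carrying $\nu(\dxx)$ onto the Lebesgue measure on $[0,1]^d$ --- which is exactly where the structural hypotheses of \Hnu\ (product form, densities, strict positivity, continuity) are used. Everything else is a formal transfer along the isometry $T$ together with the textbook completeness of the multidimensional cosine basis.
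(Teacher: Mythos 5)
Your proposal is correct and follows essentially the same route as the paper: the paper also deduces orthonormality from Lemma \ref{lemma:eqInnerNorms} together with the relation $\phi_{\mbk}\circ F_\nu^{-1}=C_{\mbk}$, and then cites the classical completeness of the tensored cosine family in $\LU$. You merely make explicit some steps the paper leaves implicit (the pushforward of $\nu$ to Lebesgue measure, surjectivity of the isometry, and the textbook completeness argument), which is fine.
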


\subsection{Global Quasi-Regression algorithm and convergence results} 
We now describe our main algorithm which is able to compute accurately the function $\yq_i$ defined in \eqref{eq:MDP:fcs:q}. For this, define
\begin{align} \label{eq:Sqi:M}
 \Sq_i(x^i_{i:N}) &:=  \frac{g(x^i_N)}{(1+|x^i_i|^2)^{q/2} } + \sum_{j=i}^{N-1} \frac{f_j\left(x^i_j, \yq_{j+1}(x^i_{j+1})(1+|x^i_{j+1}|^2)^{q/2}\right)}{(1+|x_i^i|^2)^{q/2} }\Delta,\\
 x^i_{i:N} & := (x^i_i,\dots,x^i_N) \in (\R^d)^{N-i+1}, \nonumber
\end{align}
which allows us to rewrite the exact solution as
$$\yq_i(\mb{x}) = \E\left[ \Resq{} | X_i^i = \mb{x}\right], \quad \mbx \in \R^d.$$
Note that it is easy to justify that $\Sq_i(X^i_{i:N})$ is square-integrable under \HFG, \Hnu\ and $\bf H_{\rho}(q)$ (remind of the bound \eqref{eq:prop:bound} and the integrability condition in \Hnu). Therefore,  the exact solution $\yq_i$ is in $\LNU$, and consequently (Proposition \ref{proposition:base}) it can be decomposed in
its $\phi$ series: 
\begin{align}
\label{yq:decomp:base}\yq_i(\mb{x}) = \sum_{\mb{k}\in\N^d} \Aq \phi_{\mb{k}}(\mb{x})
\end{align}
 for some coefficients $(\Aq:\mb{k}\in\N^d)$.  In practice, the coefficients $\Aq$ are  computed only in a finite set of multi-indices $\Gamma \subset \N^d$. Different choices of $\Gamma$ are discussed later in Section \ref{subsubsection:SelectionOfTheMulti-indices}.

 The orthonormality property of the $\phi_{\mb{k}}$'s (Proposition \ref{proposition:base}) combined with \eqref{yq:decomp:base} yields 
\begin{equation}
\label{def:Aq}
\Aq = (\yq_i, \phi_{\mb{k}})_{\LNU}.
\end{equation}
Thus, using the tower property of conditional expectation, we obtain, $\forall \mb{k} \in \N^d$,
 \begin{align}
\Aq   &= \Esp{\yq_i(X_i^i) \phi_{\mb{k}}(X_i^i)} = \Esp{ \Esp{\Resq{} | X_i^i } \phi_{\mb{k}}(X_i^i)}\nonumber\\
&=\Esp{ \Resq{} \phi_{\mb{k}}(X_i^i)}. \quad \label{eq:Aqi}
 \end{align}
The above expectations in \eqref{eq:Aqi} can be computed using Monte-Carlo simulations and evaluations of $\Sq_i$ along different sampled paths $x^i_{i:N}$. Since  $\Sq_i$ depends on the unknown solution, we have to replace it by approximations, using previous steps in the dynamic programming equation \eqref{eq:MDP:fcs:q}. It gives rise to the following Algorithm \ref{alg:grmdp}, which we will call \emph{Global Quasi-Regression Multistep-forward Dynamical Programming} (GQRMDP) algorithm. To maintain good integrability properties along the iteration, we use  the truncation function $\cT_{L^\star}$ \eqref{eq:TL:2} with the upper bound \eqref{eq:prop:bound}. 
\begin{algorithm}[GQRMDP algorithm]
\label{alg:grmdp}
$ $
\begin{description}
\item [\bf Initialization.]
Set $\bar{y}^{(q,M)}_{N}(x_{N}) := \frac{g(x_{N})}{(1+|x_{N}|^2)^{q/2} }$.

\item [\bf Backward iteration for $i=N-1$ to $i=0$,] 
\begin{equation}
\label{eq:yqi:bar:M} \byqiM(\cdot) := \sum_{\mb{k}\in\Gamma} \bar{\alpha}_{i,\mb{k}}^{(q,M)} \phi_{\mb{k}}(\cdot),
\end{equation}
where for all $\mb{k}\in\Gamma$,
\begin{equation} \label{eq:Aqi:bar:M}
 \AqbM := \dfrac{1}{M}\sum_{m=1}^M \SqM_{i}(X^{i,m}_{i:N})\phi_{\mb{k}}(X_i^{i,m}),
\end{equation}
and
\begin{align} \label{eq:Sqi:bar:M}
\SqM_{i}(x^i_{i:N}) & := \frac{g(x^i_{N})}{(1+|x^i_{i}|^2)^{q/2} } 
\\&\qquad+ \sum_{j=i}^{N-1} 
\dfrac{f_j\left(x_j^i,\cT_{L^\star} \left(\bar{y}^{(q,M)}_{j+1}(x^i_{j+1})(1+|x^i_{j+1}|^2)^{q/2}\right)\right)}{(1+|x^i_{i}|^2)^{q/2}}\Delta.
 \nonumber
\end{align}
\end{description}
\end{algorithm}
In the algorithm, in order to compute $\byqiM(\cdot)$ we have considered $M$ independent copies of 
$$X_i^i \xrightarrow{Euler\ scheme} X_{i+1}^i \xrightarrow{Euler\ scheme} \ldots \xrightarrow{Euler\ scheme} X_{N}^i,$$
which form a cloud of simulations $\cloud_{i} = \left\{ X_{i:N}^{i,m}: m=1,\ldots,M \right\}.$ We assume that the clouds of simulations $\left(\cloud_{i}: 0\leq i < N\right)$ are independently generated.

Our main result is the following error propagation theorem, which proof is postponed to Section \ref{section:error}. 
\begin{theorem}\label{th:main} Assume \HFG, \HBS, \Hnu\ and $\bf H_{\rho}(q)$. Let $\mb{k}\in \N^d$ be a multi-index and let $\Gamma \subset \N^d$ be a finite set of multi-indices. Let $(\phi_{\mb{k}}(.):\mb{k}\in \N^d)$ be the basis functions defined in \eqref{eq:relation:1}, as an orthonormal basis in $\LNU$, and let $\LG$ be the related Christoffel number \cite{neva:86} restricted to $\Gamma$:
\begin{equation}
\label{eq:christoffel}\LG = \sum_{\mb{k}\in\Gamma} \norm{\phi_{\mb{k}}(\cdot)}^2_\infty\leq 2^{d}\#\Gamma.
\end{equation}
For each $i\in\{0,\ldots,N-1\}$, define the following local error term
\begin{equation}
\label{eq:cEi}\cE_{i}:=\sum_{\mb{k}\in (\N^d - \Gamma)} (\Aq)^2 + \dfrac{\LG}{M} \E\left[ \left( \Sq_i(\Xi_{i:N}) \right)^2\right].\end{equation}
There is a finite positive constant $C$ (depending on model parameters but not on $N, M, \Gamma$)
 such that
\begin{align*}
\Esp{ \norm{ \byqiM(\cdot) - \yqi(\cdot) }_\nu^2 } \leq C\left( \cE_i +  \sum_{j=i}^{N-1} \cE_{j+1}\Delta\right).
\end{align*}
\end{theorem}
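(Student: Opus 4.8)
The plan is to prove the estimate by backward induction on $i$, controlling the error $\byqiM(\cdot)-\yqi(\cdot)$ in $\LNU$ by splitting it into a \emph{bias} (truncation of the infinite $\phi$-series to $\Gamma$) and a \emph{statistical} part (Monte-Carlo averaging with $M$ samples), and then propagating the errors incurred at later time steps $j>i$ through the driver $f$. The base case $i=N$ is exact since $\byqM_N=\yqN$ by the initialization. For the induction step, fix $i<N$ and write
\begin{align*}
\byqiM(\cdot) - \yqi(\cdot) = \underbrace{\Big(\byqiM(\cdot) - \Pi_\Gamma \tilde y_i(\cdot)\Big)}_{\text{statistical + driver error}} + \underbrace{\Big(\Pi_\Gamma \yqi(\cdot) - \yqi(\cdot)\Big)}_{\text{truncation bias}},
\end{align*}
where $\Pi_\Gamma$ denotes orthogonal projection in $\LNU$ onto ${\rm span}\{\phi_{\mb k}:\mb k\in\Gamma\}$ and $\tilde y_i$ is the ``idealized'' one-step update using the \emph{true} coefficients of $\SqM_i$ built from $\byqM_{j+1}$ (i.e. $\E$ of \eqref{eq:Sqi:bar:M} conditionally). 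Because the $\phi_{\mb k}$ are orthonormal, the two pieces above are orthogonal in $\LNU$, so the squared norm splits additively; the bias term is exactly $\sum_{\mb k\in\N^d-\Gamma}(\Aq)^2$, the first summand of $\cE_i$.

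For the statistical/driver term, I would again split: $\byqiM - \Pi_\Gamma\tilde y_i = (\byqiM - \E[\byqiM\mid(\cloud_j)_{j>i}]) + (\E[\byqiM\mid\cdots] - \Pi_\Gamma\tilde y_i)$. The first of these, conditionally on all later clouds, is a centered sum of $M$ i.i.d. contributions, one per $\phi_{\mb k}$, $\mb k\in\Gamma$; taking expectations, its squared $\LNU$-norm is bounded by $\frac1M\sum_{\mb k\in\Gamma}\Var(\SqM_i(\Xi_{i:N})\phi_{\mb k}(X^i_i))\le \frac1M\sum_{\mb k\in\Gamma}\E[(\SqM_i)^2\phi_{\mb k}(X^i_i)^2]\le \frac{\LG}{M}\E[(\SqM_i)^2]$, using $\sum_{\mb k\in\Gamma}\phi_{\mb k}^2\le\LG$ pointwise (the Christoffel bound). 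I then replace $\E[(\SqM_i)^2]$ by $\E[(\Sq_i)^2]$ up to the accumulated later-step errors: the difference $\SqM_i-\Sq_i$ is, by the Lipschitz assumption \HFG\ on $f$ and the fact that truncation $\cT_{L^\star}$ is $1$-Lipschitz and doesn't move $\yqjp$ (since $|\yqjp|\le L^\star$ by Proposition \ref{prop:bound}, \eqref{eq:yqi}), controlled by $L_f\Delta\sum_{j=i}^{N-1}|\byqM_{j+1}(X^i_{j+1})-\yqjp(X^i_{j+1})|(1+|X^i_{j+1}|^2)^{q/2}/(1+|X^i_i|^2)^{q/2}$; here $\bf H_{\rho}(q)$ is exactly what lets me pass from the weighted moment $\E[|\byqM_{j+1}-\yqjp|^2(X^i_{j+1})(1+|X^i_{j+1}|^2)^q/(1+|X^i_i|^2)^q]$ to $c_\nu\E[\norm{\byqM_{j+1}-\yqjp}_\nu^2]$. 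The second piece, $\E[\byqiM\mid(\cloud_j)_{j>i}] - \Pi_\Gamma\tilde y_i$, is $0$ by construction (unbiasedness of the Monte-Carlo coefficients given later clouds), or at worst again reduces to projecting the same $\SqM_i$-vs-$\Sq_i$ discrepancy onto $\Gamma$, which only loses a factor $1$ in $\LNU$.

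Assembling, I get $\Esp{\norm{\byqiM-\yqi}_\nu^2}\le \sum_{\mb k\in\N^d-\Gamma}(\Aq)^2 + \frac{\LG}{M}\E[(\Sq_i)^2] + C' L_f^2\Delta^2 N c_\nu\sum_{j=i}^{N-1}\Esp{\norm{\byqM_{j+1}-\yqjp}_\nu^2}$ after Cauchy–Schwarz on the sum over $j$ (the $\Delta^2 N = \Delta T$ giving one $\Delta$, and I absorb $N\Delta=T$, $L_f$, $c_\nu$ into $C'$); the first two terms are precisely $\cE_i$. A discrete Gronwall / backward-induction argument over $i=N-1,\ldots,0$ then yields $\Esp{\norm{\byqiM-\yqi}_\nu^2}\le C(\cE_i + \sum_{j=i}^{N-1}\cE_{j+1}\Delta)$ with $C$ depending only on $T,L_f,c_\nu$ and not on $N,M,\Gamma$. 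The main obstacle is the bookkeeping around the weights $(1+|x|^2)^{q/2}$: one must carefully carry the ratio $(1+|X^i_{j+1}|^2)^{q/2}/(1+|X^i_i|^2)^{q/2}$ through the telescoping of $\SqM_i-\Sq_i$ and invoke $\bf H_{\rho}(q)$ at each intermediate time $j$, rather than naively bounding the weight (which would not be integrable); a secondary subtlety is verifying the conditional-independence structure of the clouds $(\cloud_i)$ so that the ``unbiased given later clouds'' step is legitimate and the variance bound with the clean $\LG/M$ factor goes through.
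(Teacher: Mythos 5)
Your plan is correct and follows essentially the same route as the paper's proof: a truncation-bias/statistical-variance split using orthonormality and the Christoffel bound $\LG$, a Bessel/Jensen argument to project the $\SqM_i$-versus-$\Sq_i$ discrepancy onto $\Gamma$ with no loss, control of that discrepancy via the Lipschitz driver, the $1$-Lipschitz truncation $\cT_{L^\star}$ fixing the exact solution, Cauchy--Schwarz in $j$, the stability assumption $\bf H_{\rho}(q)$, and a discrete Gronwall iteration (the paper organizes the bookkeeping through an intermediate estimator $\byqi$ built with exact responses rather than by conditioning on $\Pi_\Gamma\tilde y_i$, but this is the same argument). Only note that your displayed decomposition omits the middle term $\Pi_\Gamma\tilde y_i-\Pi_\Gamma\yqi$, which you do in fact handle in the prose (it is exactly the projected discrepancy bounded by Jensen, the paper's Lemma \ref{lemma:secondMoment}), so this is a writing slip rather than a gap.
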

Observe that provided that $\dfrac{\LG}{M}$ remains bounded, the constant $C$ can be taken as uniform w.r.t. the algorithm parameters. This is the situation in practice since for getting accurate approximations, we need $\dfrac{\LG}{M}$ to be small. 

The convergence rate of the term $\displaystyle\sum_{\mb{k} \in (\N^d-\Gamma)}(\Aq)^2$ is discussed in Section \ref{sec:approError}  considering $\nu$ as the Student's t-distribution (Example \ref{example:student}). This analysis will be undertaken assuming that $y_i(\mbx)$ has a polynomial growth in $\mbx$ at infinity (coherently with the a priori estimate of Proposition \ref{prop:bound}) as well as its derivatives.

\subsection{Truncation error for the Student-cosine basis}\label{sec:approError}
From Equations \eqref{yq:decomp:base}-\eqref{def:Aq} and Lemma \ref{lemma:eqInnerNorms} we recall that  $\yqi = \displaystyle\sum_{\mb{k}\in \N^d} \Aq \phi_{\mb{k}},$ where 
\begin{align}
\label{eq:fourier}
\Aq = (\yqi,\phi_{\mb{k}})_{\LNU} = (\yqi\circ F_{\nu}^{-1},C_{\mb{k}})_{\LU}.
\end{align}
Define the truncation of the $\nu$-series of $\yqi$ by $$P_\Gamma \yqi := \displaystyle \sum_{\mb{k}\in \Gamma} \Aq \phi_{\mb{k}}.$$
Therefore, from orthonormal properties of the basis functions, we have
$$\norm{\yqi - P_\Gamma \yqi }^2_{\LNU} = \norm{\yqi \circ F_{\nu}^{-1} - \sum_{\mb{k}\in \Gamma} \Aq C_{\mb{k}} }^2_{\LU} = \sum_{\mb{k}\in (\N^d - \Gamma)} |\Aq|^2.$$ 
In other words, $\Aq$ are the Fourier coefficients of the projection of 
\begin{equation}
\label{eq:hqi}\hqi := \yqi \circ F_\nu^{-1} \in \LU
\end{equation} 
over cosine basis functions. In the following section we will take advantage of this property to derive  a tight bound for the truncation error $\sum_{\mb{k}\in (\N^d - \Gamma)} |\Aq|^2$, using assumptions on $\yqi$ and its derivatives only (and not on $\yqi \circ F_{\nu}^{-1}$). The specific choice of the Student's t-distribution measure $\nu$ of Example \ref{example:student} plays an important role to get tractable conditions.

Now our aim is twofold. Firstly we prove that if  $y_i$ is $r$ times ($r\geq 1$) continuously differentiable on $\R^d$, with (for some $ p\in \N$),
\begin{align*}
\limsup_{|\mbx| \to + \infty} \dfrac{|\pdyi{n}{l}(\mbx)|}{|\mb x|^{p-n}}<+\infty,
\quad \text{for} \qquad n=0,\ldots,r, \quad l=1,\dots, d,
\end{align*}
and if the parameters are such that 
\begin{equation*} 
p+\left(r-\frac{1}{2}\right)\mu < q,
\end{equation*}    
then the truncation error is controlled as
\begin{align*}
\sum_{\mbk \in \N^d: \exists k_l > K_l} |\Aq|^2& \leq \sum_{l=1}^d \dfrac{C}{{(K_l+1)}^{2r}}
\end{align*}
for some constant $C>0$. In the following, we first establish the above result in dimension 1 (Proposition \ref{proposition:approError1}) and then in dimension $d$  (Proposition \ref{proposition:approErrord}) .

Our second goal is improving this result when mixed derivatives of $y_i$ are also supposed smooth. Thus, we prove that if $y_i$ is $\Lambda$-smooth, where $\Lambda(r) = \{0,\dots,r\}^d$, with (for some $p \in \N$),
\begin{align*}
\limsup_{|\mbx| \to + \infty} \dfrac{|{\partial^{\mb{n}}_x} y_i(\mbx)|}{|\mb x|^{p-\overline{\mb{n}}}}<+\infty,
\quad \text{for} \quad \mb{n}\in {\Lambda},
\end{align*}
and if the parameters are such that $$p+\left(d\ r-\dfrac{1}{2}\right)\mu<q,$$ 
then the truncation error satisfies the tail inequality
\begin{align*}
\sum_{\mbk \in \bar{\Gamma}_H(\DEG)} |\Aq|^2& \leq  \dfrac{C}{{(\DEG+1)^{2r}}},
\end{align*}
for some constant $C>0$ (Proposition \ref{proposition:approErrord:Multivariate}).

%

\subsubsection{One-dimensional case}
In this section, we let $\nu=\nu_1$ be the Student density \eqref{eq:studentDensity}.

\begin{lemma} \label{lemma:lr_lims_zero}
Assume that $y_i$ is $r$ times ($r\in \N$) continuously differentiable on $\R$, with the existence of a polynomial growth degree $p\in\N$ such that 
\begin{equation}
\limsup_{x\to \pm \infty} \dfrac{|\dyi{n}(x)|}{|x|^{p-n}}<+\infty, \quad \text{for} \quad n=0,\ldots,r.
\label{eq:poly:growth:dyi}
\end{equation}
Assume the relation
\begin{equation} 
\label{eq:lr_lims_zero} 
p+r\mu<q 
\end{equation}    
between $r$, $p$, the Student parameter $\mu>0$ and the damping exponent $q\geq0$. Then $\hqi$ is $r$ times continuously differentiable on $(0,1)$,  and its  first $r$ derivatives vanish at $u=0$ and $u=1$:
$$\lim_{u\to0^+,1^-}\ddhqi{n}(u) = 0, \quad n=0,\ldots,r.$$
\end{lemma}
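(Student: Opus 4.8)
The plan is to write $\hqi = \yqi \circ \Fnuoinv$ and track how derivatives of $\hqi$ at $u \to 0^+, 1^-$ are built from derivatives of $\yqi$ composed with $\Fnuoinv$, times powers of derivatives of $\Fnuoinv$. The key chain of reasoning is: (1) express the factor $\yqi(x) = y_i(x)/(1+x^2)^{q/2}$ and its derivatives in terms of $y_i$ and its derivatives, using the polynomial growth hypothesis \eqref{eq:poly:growth:dyi}; (2) apply the Fa\`a di Bruno formula to $\hqi = \yqi \circ \Fnuoinv$, so that $\ddhqi{n}(u)$ is a finite sum of terms of the form $(\ddyqi{m}(\Fnuoinv(u)))\cdot \prod_{j}(\dnFnulinvapp{a_j}{1}(u))$ with $\sum_j a_j$ accounting for the derivative orders and $m \leq n$; (3) obtain quantitative bounds as $u\to 0^+$ (and symmetrically $u\to 1^-$): since $\nu_1$ is the Student density, $\Fnuoinv(u) \to -\infty$ like a power of $u$, and the a priori estimates on $\Fnuoinv$ and its derivatives $\dnFnulinvapp{n}{1}$ collected in Appendix \ref{appendix:estimatesStudent} give that $|\Fnuoinv(u)| \asymp u^{-1/\mu}$ and $|\dnFnulinvapp{n}{1}(u)| \lesssim u^{-1/\mu - n}$ (up to constants depending on $\mu$, $n$). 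Combining these, each Fa\`a di Bruno term is bounded by a constant times $|\Fnuoinv(u)|^{p-m}$ (from the growth of $\ddyqi{m}$, which behaves like $|x|^{p - m - q}$ after the damping) times $|\Fnuoinv(u)|^{m}\, u^{-n}$-type contributions from the $\Fnuoinv$-derivative factors; a careful bookkeeping of exponents shows the total power of $|\Fnuoinv(u)| \asymp u^{-1/\mu}$ is at most $p - q$, so each term is $O(u^{(q-p)/\mu}\, u^{-n})$.

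First I would reduce the statement about $\yqi$ to the statement about $y_i$: since $(1+x^2)^{-q/2}$ is smooth on $\R$ with $n$-th derivative of size $O((1+x^2)^{-q/2-n/2})= O(|x|^{-q-n})$ at infinity, the Leibniz rule gives $\limsup_{|x|\to\infty} |\ddyqi{n}(x)| / |x|^{p-q-n} < +\infty$ for $n = 0,\dots,r$. So $\yqi$ satisfies the same type of polynomial growth as $y_i$ but with degree $p-q$ instead of $p$. Next I would invoke the Student-specific estimates: from Appendix \ref{appendix:estimatesStudent}, as $u \to 0^+$, $\Fnuoinv(u) \sim -(c u)^{-1/\mu}$ for a constant $c$, and more importantly $|\dnFnulinvapp{n}{1}(u)| \leq C_n\, u^{-1/\mu-n}$ (and symmetric statements at $u\to 1^-$). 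Then, applying Fa\`a di Bruno to $\hqi(u) = \yqi(\Fnuoinv(u))$, a general term corresponding to a partition of $\{1,\dots,n\}$ into blocks of sizes $a_1,\dots,a_m$ (so $\sum a_j = n$, and $m$ blocks) is
\[
\ddyqi{m}(\Fnuoinv(u)) \prod_{j=1}^m \dnFnulinvapp{a_j}{1}(u),
\]
which in absolute value is $\lesssim |\Fnuoinv(u)|^{\,p-q-m} \cdot \prod_{j=1}^m u^{-1/\mu - a_j} = |\Fnuoinv(u)|^{\,p-q-m}\, u^{-m/\mu - n}$. Since $|\Fnuoinv(u)| \asymp u^{-1/\mu}$, this is $\asymp u^{-(p-q-m)/\mu}\, u^{-m/\mu - n} = u^{-(p-q)/\mu - n} = u^{(q-p)/\mu - n}$. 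The condition \eqref{eq:lr_lims_zero}, $p + r\mu < q$, i.e. $(q-p)/\mu > r \geq n$, ensures the exponent $(q-p)/\mu - n > 0$ for all $n \leq r$, so each term tends to $0$; hence $\ddhqi{n}(u) \to 0$ as $u \to 0^+$. The argument at $u\to 1^-$ is identical using the symmetric estimates (or the substitution $u \mapsto 1-u$, using that $\nu_1$ is even).

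The main obstacle I anticipate is making the exponent bookkeeping in step (3) fully rigorous and uniform — in particular, (a) checking that the a priori estimates on $\dnFnulinvapp{n}{1}$ from the appendix are exactly of the claimed form $u^{-1/\mu - n}$ with no logarithmic corrections or hidden dependence on $n$ that would spoil the sum over the (finitely many) Fa\`a di Bruno terms, and (b) handling the case $m = 0$, i.e. $n = 0$, separately, where one just needs $\yqi(\Fnuoinv(u)) \to 0$, which follows directly from $|\yqi(x)| \lesssim |x|^{p-q}$ and $p - q < 0$ (a consequence of \eqref{eq:lr_lims_zero}). A minor additional point is continuity (as opposed to mere existence of limits) of $\ddhqi{n}$ on the open interval $(0,1)$, which is immediate from the chain rule since $\Fnuoinv$ is $C^\infty$ on $(0,1)$ and $y_i \in C^r$; and one should note the $\limsup$ hypotheses only control behavior at infinity, so on any compact subset of $\R$ the composition is automatically $C^r$. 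I would also double-check that $r\mu$ (not $(r-\tfrac12)\mu$, which appears in the $d$-dimensional Proposition) is indeed the right threshold here — the extra $\tfrac12$ in higher dimensions comes from an $L^2$ versus $L^\infty$ trade-off in the Fourier estimate, not from this lemma, so the cleaner bound $p + r\mu < q$ is appropriate for the pointwise vanishing statement.
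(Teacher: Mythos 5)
Your proposal is correct and follows essentially the same route as the paper: bound the derivatives of $\yqi$ via the Leibniz rule and the estimate on $\partial^m_x(1+x^2)^{-q/2}$, apply Fa\`a di Bruno to $\hqi=\yqi\circ\Fnuoinv$ together with the appendix estimates $|\dnFnulinvapp{n}{1}(u)|\leq C u^{-1/\mu-n}$ and $|\Fnuoinv(u)|\asymp u^{-1/\mu}$, and conclude that each term is $O\!\left(u^{(q-p)/\mu-n}\right)$, which vanishes under $p+r\mu<q$ (with the symmetric argument at $u\to1^-$ and the $n=0$ case handled directly from $q>p$). Your exponent bookkeeping matches the paper's bound $O\!\left(u^{-\frac{p+n\mu-q}{\mu}}\right)$ exactly, so there is nothing to add.
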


\begin{proof}
The regularities of $\yqi$ and $\hqi$ are straightforward in view of \eqref{eq:yqi}-\eqref{eq:hqi} and Lemma \ref{lemma:dFnulinv:app}. The estimate $\lim_{u\to0^+,1^-}\hqi(u) = 0$ is clearly true since it is equivalent to $0=\lim_{x\to\pm\infty}\yqi(x)$, which holds owing to the condition $q>p$.

From now on, we may consider the case $n \geq 1$ and $r\geq 1$. We start showing that for $1\leq n\leq r$,
\begin{equation}
\ddyqi{n}(x)=
O(x^{p-n-q}) \mbox{ as } x\to\pm\infty.\label{eq:dyqi}
\end{equation}
In fact, using Leibniz rule, $$\ddyqi{n}(x) = \sum_{m=0}^n {n\choose m} \mbox{ } \dyi{n-m}(x) \mbox{ }{\rm d}^m_x \dfrac{1}{(1+x^2)^{q/2}}.$$ Besides,
\begin{align*}
\dyi{n-m}(x) &= 
O(x^{p-n+m}), \qquad
{\rm d}^m_x \dfrac{1}{(1+x^2)^{q/2}} = O(x^{-q-m}), 
\qquad\mbox{ as } x\to \pm \infty, 
\end{align*}
which leads directly to the desired result \eqref{eq:dyqi}. 

We now estimate $\ddhqi{n}(u)$. In order to compute the $n$-\textit{th} derivative of $\hqi$ we take advantage of the following Fa\`a di Bruno's formula \cite[pp.~224--226]{Johnson02thecurious}, 
\begin{equation} \label{eq:1dFaaDiBruno}
\begin{split}
\ddhqi{n}(u) &= {\rm d}_u^n \yqi(\Fnuoinv(u)) \\
&= \sum_{m=1}^n \frac1{m!}\ddyqi{m}(x)\big|_{x=\Fnuoinv(u)} \sum_{\mb{j}\in J_{m,n}} \dfrac{n!}{j_1!j_2!\cdots j_m!} \prod_{i=1}^m \dnFnudinv{j_i},\\
J_{m,n}&=\{\mb{j}=(j_1,\ldots,j_m)\in \N^m_+:j_1+\ldots+j_m = n\}.
\end{split}
\end{equation}
As $u\to 0^+$, using also \eqref{eq:dFnulinv:app}, 
\begin{align}\nonumber
 \ddhqi{n}(u) &= O\left(\max_{1\leq m \leq n,\ \mb{j}\in J_{m,n}} \left|\left( \Fnuoinv(u)\right)^{p-m-q} \dnFnudinv{j_1}\cdots \dnFnudinv{j_m} \right|\right) \\
\nonumber           &= O\left( \max_{1\leq m \leq n,\ \mb{j}\in J_{m,n}}
u^{-\frac{p-m-q}{\mu}} u^{-\frac{1+j_1\mu}{\mu}} \cdots u^{-\frac{1+j_m\mu}{\mu}} \right) 
           \\
           &= O\left( u^{-\frac{p+n\mu-q}{\mu}} \right).\label{eq:limit:ddhqi:0}
\end{align}
The above bound  goes to 0 as $u\to 0^+$ since  ${p}+n\mu-q\leq  {p}+r\mu -q< 0$.\\
Analogously, as $u\to 1^-$, we have
\begin{align}
 \ddhqi{n}(u) &= O\left( (1-u)^{-\frac{p+n\mu-q}{\mu}} \right)\label{eq:limit:ddhqi:1}
\end{align}
which converges to 0 as  $u\to 1^-$ since  $p+n\mu < q$.
\qed \end{proof}

\begin{lemma}\label{lemma:lr_norms_bounded}
Assume the hypotheses of Lemma \ref{lemma:lr_lims_zero} and replace the condition \eqref{eq:lr_lims_zero} by the weaker condition
\begin{equation} 
\label{eq:lr_lims_zero:weaker} p+\left(r-\frac{1}{2}\right)\mu < q.
\end{equation}    
Then the $r$ first derivatives of $\hqi$ are square integrable on $(0,1)$:
$$\norm{\ddhqi{n}}^2_{L^2_U([0,1])} < +\infty, \quad n=0,\ldots,r.$$
\end{lemma}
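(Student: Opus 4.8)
The strategy is to reuse the estimates obtained inside the proof of Lemma \ref{lemma:lr_lims_zero} but to keep track of integrability rather than pointwise vanishing. Recall that there we established, via the Fa\`a di Bruno expansion \eqref{eq:1dFaaDiBruno} together with the a priori growth \eqref{eq:dyqi} of $\ddyqi{m}$ and the singularity estimates \eqref{eq:dFnulinv:app} for the derivatives of $\Fnuoinv$, the two-sided bound
\begin{align*}
\ddhqi{n}(u) = O\!\left(u^{-\frac{p+n\mu-q}{\mu}}\right)\ \text{as }u\to0^+,\qquad
\ddhqi{n}(u) = O\!\left((1-u)^{-\frac{p+n\mu-q}{\mu}}\right)\ \text{as }u\to1^-,
\end{align*}
for every $0\le n\le r$. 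These bounds hold verbatim here: they only used the differentiability of $y_i$, its polynomial growth \eqref{eq:poly:growth:dyi}, and the Student structure of $\nu$, none of which is weakened. The point is that squaring these bounds gives $|\ddhqi{n}(u)|^2 = O(u^{-2(p+n\mu-q)/\mu})$ near $0$ and the symmetric statement near $1$.

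First I would fix $n\in\{0,\dots,r\}$ and split the integral $\int_0^1 |\ddhqi{n}(u)|^2\,\du$ into three pieces: a neighbourhood $(0,\varepsilon)$ of $0$, the compact interval $[\varepsilon,1-\varepsilon]$, and a neighbourhood $(1-\varepsilon,1)$ of $1$, for some small fixed $\varepsilon\in(0,1/2)$. On the middle interval $\hqi$ is $C^r$ (again by Lemma \ref{lemma:lr_lims_zero}'s regularity statement, which only needs $r\le$ the differentiability order of $y_i$, not the constraint \eqref{eq:lr_lims_zero}), hence $\ddhqi{n}$ is continuous and therefore bounded on the compact set $[\varepsilon,1-\varepsilon]$, contributing a finite amount. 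For the endpoint pieces, I would plug in the singularity bound: $\int_0^\varepsilon |\ddhqi{n}(u)|^2\,\du = O\!\big(\int_0^\varepsilon u^{-2(p+n\mu-q)/\mu}\,\du\big)$, and this integral converges precisely when $2(p+n\mu-q)/\mu < 1$, i.e. when $p+n\mu-q < \mu/2$, i.e. $p+(n-\tfrac12)\mu < q$. Since $n\le r$, this is implied by the hypothesis \eqref{eq:lr_lims_zero:weaker}, $p+(r-\tfrac12)\mu<q$. The neighbourhood of $1$ is handled identically using the $(1-u)$-bound and the substitution $v=1-u$.

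The only mild subtlety — and the step I expect to need the most care — is making sure the regularity part of Lemma \ref{lemma:lr_lims_zero} (that $\hqi\in C^r(0,1)$, so that $\ddhqi{n}$ is well-defined and continuous on the interior) is genuinely available under the weaker hypothesis \eqref{eq:lr_lims_zero:weaker} rather than \eqref{eq:lr_lims_zero}. Inspecting that argument, the $C^r$ regularity came only from the smoothness of $\yqi$ and the smoothness of $\Fnuoinv$ on $(0,1)$ (Lemma \ref{lemma:dFnulinv:app}), with no use of \eqref{eq:lr_lims_zero} at all — the relation \eqref{eq:lr_lims_zero} was invoked purely to force the boundary limits to be zero. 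So the regularity transfers for free, and the proof reduces to the elementary power-function integrability computation above. I would conclude by noting that since finitely many $n\in\{0,\dots,r\}$ are involved and each term is finite, $\norm{\ddhqi{n}}^2_{L^2_U([0,1])}<+\infty$ for all $n=0,\dots,r$. \qed
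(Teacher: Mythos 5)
Your proposal is correct and follows essentially the same route as the paper: both reuse the boundary estimates \eqref{eq:limit:ddhqi:0}--\eqref{eq:limit:ddhqi:1} from the proof of Lemma \ref{lemma:lr_lims_zero} (which indeed do not require \eqref{eq:lr_lims_zero} itself), invoke continuity on compacta $[\varepsilon,1-\varepsilon]$, and check that the power $u^{-2(p+n\mu-q)/\mu}$ is integrable near the endpoints exactly when $p+\left(n-\tfrac{1}{2}\right)\mu<q$, which follows from \eqref{eq:lr_lims_zero:weaker} since $n\leq r$. Your explicit verification that the interior $C^r$ regularity of $\hqi$ is independent of \eqref{eq:lr_lims_zero} is a careful touch that the paper leaves implicit, but it does not change the argument.
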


\begin{proof}
The integrability of $\ddhqi{n}$ on any compact set of the form $[\varepsilon,1-\varepsilon]$ ($\varepsilon>0$) is obvious owing to the continuity of the function to integrate. As a consequence one has only to consider what happens around 0 and 1. 
As we have just seen in the proof of Lemma \ref{lemma:lr_lims_zero} (see \eqref{eq:limit:ddhqi:0}-\eqref{eq:limit:ddhqi:1}), 
$$\ddhqi{n}(u) =              O_{u\to0^+}\left(u^{-\frac{p+n\mu-q}{\mu}}\right), \qquad
\ddhqi{n}(u) =              O_{u\to1^-}\left((1-u)^{-\frac{p+n\mu-q}{\mu}}\right);$$
therefore it is square-integrable around $u=0$ and $u=1$ provided that \\\mbox{$-\frac{p+n\mu-q}{\mu}>-\frac{1}{2 }$}. Since $n\leq r$ the condition \eqref{eq:lr_lims_zero:weaker} is sufficient.
\qed \end{proof}

\begin{proposition} \label{proposition:approError1} 
Assume that $y_i$ is $r$ times ($r \geq 1$) continuously differentiable on $\R$, with the existence of a polynomial growth degree $p\in\N$ such that \eqref{eq:poly:growth:dyi} holds. Assume the relation \eqref{eq:lr_lims_zero:weaker} holds. Then, the coefficients  $(\Aqo,k\in\N)$
defined in \eqref{eq:fourier} satisfy the tail inequality (for any $K\geq 0$)
\begin{equation}\displaystyle\sum_{k=K+1}^\infty |\Aqo|^2 \leq \dfrac{C^{(r)}_{\eqref{eq:tail:inequality:coeff:dim1}}}{{(K+1)}^{2r}},
\label{eq:tail:inequality:coeff:dim1}
\quad\text{where}\quad C^{(r)}_{\eqref{eq:tail:inequality:coeff:dim1}} := \dfrac{1}{\pi^{2r}}\norm{\ddhqi{r}}^2_{L^2_U([0,1])}<+\infty.
\end{equation}
\end{proposition}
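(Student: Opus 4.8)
The plan is to reduce the tail bound for the multidimensional-looking but actually one-dimensional Fourier coefficients $\Aqo$ to the classical decay estimate for Fourier cosine coefficients of a smooth function on $[0,1]$ whose derivatives up to order $r-1$ vanish at the endpoints. Recall from \eqref{eq:fourier} that $\Aqo = (\hqi, C_k)_{L^2_U([0,1])}$ where $\hqi = \yqi\circ\Fnuoinv \in \LU$, and by Lemma \ref{lemma:lr_lims_zero} and Lemma \ref{lemma:lr_norms_bounded} (whose hypotheses are exactly \eqref{eq:poly:growth:dyi} together with \eqref{eq:lr_lims_zero:weaker}) we know $\hqi$ is $r$ times continuously differentiable on $(0,1)$, its first $r$ derivatives vanish at $0$ and $1$, and $\ddhqi{n}\in L^2_U([0,1])$ for $n=0,\ldots,r$. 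Wait — Lemma \ref{lemma:lr_lims_zero} requires the stronger \eqref{eq:lr_lims_zero}; under the weaker \eqref{eq:lr_lims_zero:weaker} only the square-integrability (Lemma \ref{lemma:lr_norms_bounded}) holds together with vanishing of the first $r-1$ derivatives at the endpoints, which is in fact all we need.

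First I would integrate by parts $r$ times in the integral $\Aqo = \sqrt2\int_0^1 \hqi(u)\cos(k\pi u)\,\du$ (for $k\geq 1$). Each integration by parts moves one derivative from $\cos(k\pi u)$ (or its antiderivatives $\sin,\cos,\ldots$, picking up a factor $1/(k\pi)$ each time) onto $\hqi$; the boundary terms all vanish because $\hqi,\hqi',\ldots,\hqi^{(r-1)}$ vanish at $u=0$ and $u=1$, and $\cos(k\pi\cdot),\sin(k\pi\cdot)$ evaluated at the endpoints are bounded. After $r$ steps one obtains $|\Aqo| = \frac{\sqrt2}{(k\pi)^r}\bigl|\int_0^1 \ddhqi{r}(u)\,g_k(u)\,\du\bigr|$ where $g_k$ is $\pm\cos(k\pi u)$ or $\pm\sin(k\pi u)$ depending on the parity of $r$; in particular $|g_k|\leq 1$ pointwise. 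Then I would Bessel-bound the sum: since the functions $(\sqrt2 g_k)_{k\geq 1}$ together with the constant form (a subfamily of) an orthonormal system in $L^2_U([0,1])$, Bessel's inequality gives $\sum_{k=K+1}^\infty \bigl|(\ddhqi{r}, \sqrt2 g_k)_{L^2_U}\bigr|^2 \leq \norm{\ddhqi{r}}^2_{L^2_U([0,1])}$. Combining, $\sum_{k=K+1}^\infty |\Aqo|^2 = \sum_{k=K+1}^\infty \frac{1}{(k\pi)^{2r}}\bigl|(\ddhqi{r},\sqrt2 g_k)_{L^2_U}\bigr|^2 \leq \frac{1}{((K+1)\pi)^{2r}}\norm{\ddhqi{r}}^2_{L^2_U([0,1])}$, which is exactly \eqref{eq:tail:inequality:coeff:dim1}.

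The main technical point to get right is the justification that all boundary terms in the repeated integration by parts vanish, and that the differentiations are legitimate — i.e. that $\hqi$ is genuinely $C^r$ on the open interval with the endpoint limits of its first $r-1$ derivatives equal to zero, so that the integration by parts on $[\varepsilon,1-\varepsilon]$ passes to the limit $\varepsilon\to 0^+$. This is where one uses Lemma \ref{lemma:lr_lims_zero}/Lemma \ref{lemma:lr_norms_bounded} carefully: the vanishing of the first $r-1$ derivatives at the endpoints comes from the $O(u^{-(p+n\mu-q)/\mu})$ estimate \eqref{eq:limit:ddhqi:0}-\eqref{eq:limit:ddhqi:1}, which under \eqref{eq:lr_lims_zero:weaker} tends to $0$ for $n\leq r-1$ (since then $p+n\mu-q < p+(r-1)\mu-q < -\tfrac12\mu < 0$), and the square-integrability of $\ddhqi{r}$ near the endpoints (needed so the final integral is finite) is precisely Lemma \ref{lemma:lr_norms_bounded}. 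A mild care point: the antiderivatives of $\cos(k\pi u)$ used at intermediate steps also need to be evaluated at endpoints against lower-order derivatives of $\hqi$, all of which vanish, so no hidden boundary contributions survive. The case $K=0$ includes the constant mode $k=0$ trivially in the sense that the stated sum starts at $k=1$; and the finiteness $C^{(r)}_{\eqref{eq:tail:inequality:coeff:dim1}}<+\infty$ is just Lemma \ref{lemma:lr_norms_bounded}. The rest is bookkeeping.
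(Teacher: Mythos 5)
Your proposal is correct and follows essentially the same route as the paper: invoke Lemma \ref{lemma:lr_lims_zero} with $r-1$ derivatives (for which the weaker condition \eqref{eq:lr_lims_zero:weaker} suffices) and Lemma \ref{lemma:lr_norms_bounded} with $r$ derivatives, integrate by parts $r$ times with vanishing boundary terms, then apply Bessel's inequality to the orthonormal cosine/sine system and bound the tail by $(K+1)^{-2r}$. The only cosmetic difference is that the paper performs the integrations by parts with the complex exponential $e^{\mb{i}k\pi u}$ and takes real parts, whereas you track the real antiderivatives directly.
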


\begin{proof} 
Under \eqref{eq:lr_lims_zero:weaker}, we can invoke Lemma \ref{lemma:lr_lims_zero} (with $r-1$ derivatives) and Lemma \ref{lemma:lr_norms_bounded} (with $r$ derivatives), to get
\begin{equation}\label{eq:controle:derivee}
\begin{split}
 & \lim_{u\to0^+,1^-}\ddhqi{n}(u) = 0, \quad \text{for } n=0,\ldots,r-1,\\
 & \norm{\ddhqi{n}}^2_{L^2_U([0,1])} <+ \infty,\quad \text{for } n=0,\ldots,r.
\end{split}
\end{equation}
The  coefficients $(\Aqo,k\in\N)$  coincide with the Fourier coefficients of the decomposition of $\hqi$ with respect to the orthonormal basis functions $\{C_k,k\in\N\}$ in $L^2_{U}([0,1])$ (see the relations \eqref{eq:fourier}-\eqref{eq:hqi}). The $C_k$-cosine series of $\hqi$ writes
$$\hqi(u) = \displaystyle\sum_{k=0}^\infty \Aqo C_k(u) = \alpha_{i,0}^{(q)} + \sum_{k=1}^\infty \Aqo C_k(u),$$ where 
\begin{align*}
\alpha_{i,0}^{(q)} = \displaystyle\int_0^1 \hqi(u) \du\quad\text{ and }\quad \Aqo = \sqrt{2}\displaystyle\int_0^1 \hqi(u) \cos(k \pi u)\du \text{ for }
 k>0.\end{align*}
Applying integration by parts ($\hqi$ is continuously differentiable) and using that \\$\lim_{u\to 0^+,1^-} \hqi(u) = 0$ we get (for $k\neq 0$)
\begin{align} \label{eq:oneDimTrunErrorIntParts1}
\Aqo &= 
\Re{\sqrt{2} \lim_{\varepsilon\rightarrow 0} \int_{\varepsilon}^{1-\varepsilon} \hqi(u) e^{\mb{i} k \pi u}\du } \nonumber \\
&=\Re{\dfrac{\sqrt{2}}{\mb{i} k\pi} \left[\hqi(u)e^{\mb{i} k \pi u} \right]_{0^+}^{1^-} - \dfrac{\sqrt{2}}{\mb{i} k\pi}\displaystyle\int_0^1 \dhqi(u)e^{\mb{i} k \pi u}\du} \nonumber \\
&=\Re{\dfrac{\sqrt{2}}{-\mb{i}k\pi}\displaystyle\int_0^1 \dhqi(u)e^{\mb{i} k \pi u}\du}.
\end{align}
Thanks to \eqref{eq:controle:derivee}, we can repeat $r-1$ times the integration by parts as above, and we obtain
\begin{align} \label{eq:oneDimTrunErrorIntPartsr}
\Aqo  
&=\Re{\dfrac{\sqrt{2}}{(-\mb{i}k\pi)^r}\displaystyle\int_0^1 \ddhqi{r}(u)e^{\mb{i} k \pi u}\du} \nonumber \\
& =\dfrac{1}{(k\pi)^r}
\displaystyle\int_0^1 \ddhqi{r}(u)\Re{\sqrt{2}\ \mb{i}^re^{\mb{i} k \pi u}}\du.
\end{align}
Observe that the sets  $$\{u\mapsto\sqrt 2 \cos(\pi k u),k\geq 1 \},$$
 and 
 $$\{u\mapsto\sqrt 2 \sin(\pi k u),k\geq 1 \},$$
are made of orthonormal functions in $L^2_U([0,1])$. Consequently, for any value of $r$, $\{\Re{\sqrt{2}\ \mb{i}^re^{\mb{i} k \pi u}},k\geq 1 \}$ is a set of orthonormal functions and the equality \eqref{eq:oneDimTrunErrorIntPartsr} means that $\{(k\pi)^r\Aqo:k\geq 1\}$ are the projection coefficients of $\pm \ddhqi{r}$ on these orthonormal functions. As such, the Bessel inequality gives
\begin{align*}
\sum_{k=1}^{\infty}(k\pi )^{2r} |\Aqo|^2\leq \norm{\ddhqi{r}}^2_{L^2_U([0,1])}=\pi^{2r}C^{{(r)}}_{\eqref{eq:tail:inequality:coeff:dim1}}.
\end{align*}
It readily follows 
$$\sum_{k=K+1}^\infty |\Aqo|^2 \leq \dfrac{1}{\pi^{2r}}\sum_{k=K+1}^\infty \dfrac{(k\pi)^{2r}}{(K+1)^{2r}}|\Aqo|^2 \leq \dfrac{C^{{(r)}}_{\eqref{eq:tail:inequality:coeff:dim1}} }{(K+1)^{2r}}.$$
\qed \end{proof}

\subsubsection{Multi-indices sets} \label{subsubsection:SelectionOfTheMulti-indices}
In this work only arbitrary downward closed multi-indices sets are considered. {As already stated in the introduction}, the finite multi-indices set $\Gamma\subset \N^d$ is downward closed if $$(\mbk\in\Gamma \mbox{ and } \mb{j} \leq \mbk) \Rightarrow \mb{j} \in\Gamma,$$
where $\mb{j} \leq \mbk$ means that $j_l\leq k_l$ for all $l=1,\ldots,d$. The most well-known of such multi-indices sets is the full or tensor product multi-indices set. For any $\mb{K}=(K_1,\ldots,K_d)$ where $K_l\in \N$ for all $l=1,\ldots,d$, the full anisotropic multi-indices set and its complement are defined by 
\begin{align*}
 \Gamma_F(K_1,\ldots,K_d) &:= \{\mbk \in \N^d: k_l \leq K_l, \quad \forall l=1\ldots d \}, \\
 \bar{\Gamma}_F(K_1,\ldots,K_d) &:= \{ \mbk\in\N^d : \mbk\notin \Gamma_F(K_1,\ldots,K_d)\}.
\end{align*}

In high dimensions the full multi-indices set suffers from the curse of dimensionality. In order to effectively compute the projection of $\yqi$ over a multidimensional set of of basis functions we propose to consider the following two sparse sets of downward closed multi-indices sets, where $\DEG$ denotes a nonnegative integer:
\begin{itemize}
 \item Total degree set defined by $$\Gamma_{T}(\DEG) := \left\{\mb{k} \in \mathbb{N}^d: \displaystyle\sum_{l=1}^{d} k_l\leq \DEG \right\},$$ where $\#\Gamma_{T}(\DEG) = \binom{\DEG + d}{d}$.
 \item Hyperbolic cross index set defined by
$$\Gamma_{H}(\DEG) := \left\{\mb{k} \in \mathbb{N}^d: \prod_{l=1}^d \max(k_l,1) \leq \DEG \right\}.$$
In order to compute the number of elements in $\Gamma_{H}(\DEG)$ we split this set in the following two disjoint sets:
\begin{itemize}
 \item $\Gamma^d_1(\DEG):= \left\{\mb{k} \in \mathbb{N}^d, \forall l, k_l \neq 0: \prod_{l=1}^d k_l \leq \DEG \right\},$
 \item $\Gamma_2(\DEG):= \left\{\mb{k} \in \mathbb{N}^d, \exists l, k_l = 0: \prod_{l=1}^d \max(k_l,1) \leq \DEG \right\}.$
\end{itemize}
On the one hand, let us compute $\#\Gamma^d_1(\DEG)$. Firstly we define $$\Upsilon^d(\DEG) := \left\{\mb{k} \in \mathbb{N}^d, \forall l, k_l \neq 0: \prod_{l=1}^d k_l = \DEG \right\}.$$ Secondly we divide $\Gamma^d_1(\DEG)$ in the following disjoint sets, $$\Gamma^d_1(\DEG) = \bigcup\limits_{g=2}^{\DEG} \Upsilon^d(g) \cup \{ \mb{1}\}.$$ For {any integer $g\in \{ 2,\dots, \DEG\}$ having the prime} factorization $$g = \prod_i p_i^{v_i},$$ $\#\Upsilon^d(g)$ is equal to the number of possible ways to write $g$ as a product of exactly $d$ of its factors, $$\#\Upsilon^d(g) = \prod_i \binom{v_i + d - 1}{d-1}.$$ 
Therefore, $$\#\Gamma^d_1(\DEG) = \sum_{g=2}^{\DEG} \#\Upsilon^d(g) + 1.$$

On the other hand, let us compute $\#\Gamma_2(\DEG)$. Firstly we define $\varUpsilon_c(\DEG)$ as the subset of $\Gamma_2(\DEG)$ containing the multi-indices of $\Gamma_2(\DEG)$ with exactly $c$ of the $d$ indexes equal to zero, $0<c<d$. Note that $\#\varUpsilon_c(\DEG) = {d \choose c} \#\Gamma^{d-c}_1(\DEG)$. Therefore, since  $$\Gamma_2(\DEG) = \bigcup\limits_{c=1}^{d-1} \varUpsilon_c(\DEG) \cup \{ \mb{0}\},$$ $$\#\Gamma_2(\DEG) = \sum_{c=1}^{d-1} \binom{d}{c} \#\Gamma^{d-c}_1(\DEG) + 1.$$

Finally, we {conclude using} that $\#\Gamma_{H}(\DEG) = \#\Gamma^d_1(\DEG) + \#\Gamma_2(\DEG).$
\end{itemize}

Note that $\forall g \in \N, g \leq (\DEG-d+1)_+, \Gamma_H(g) \subset \Gamma_T(\DEG)$.
See Figure \ref{fig:totalHyperbolic} and Table \ref{tab:totalHyperbolic}.

\begin{center}
\begin{figure}[!htb]
  \subfigure{\includegraphics[height=3.0cm]{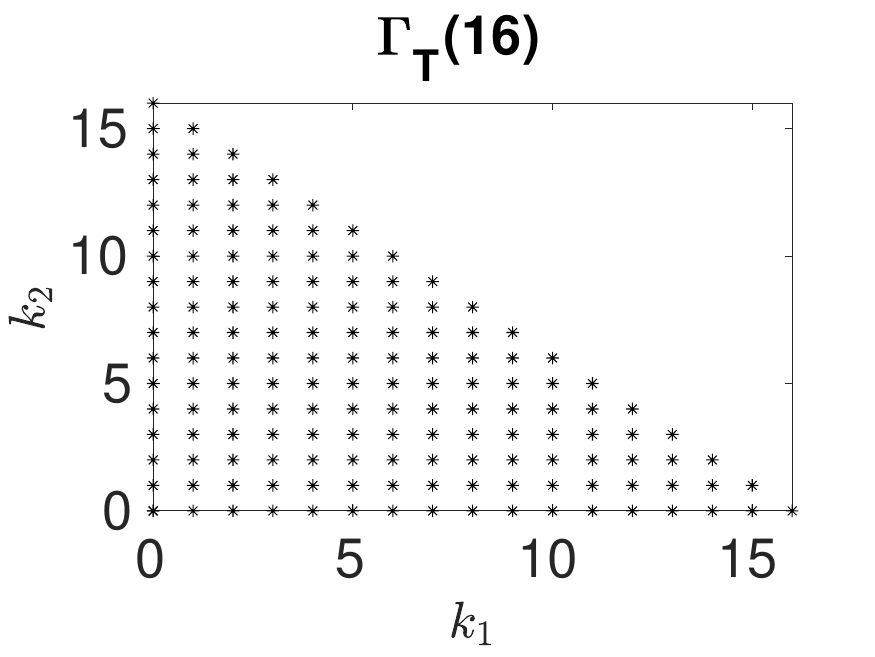}}
  \subfigure{\includegraphics[height=3.0cm]{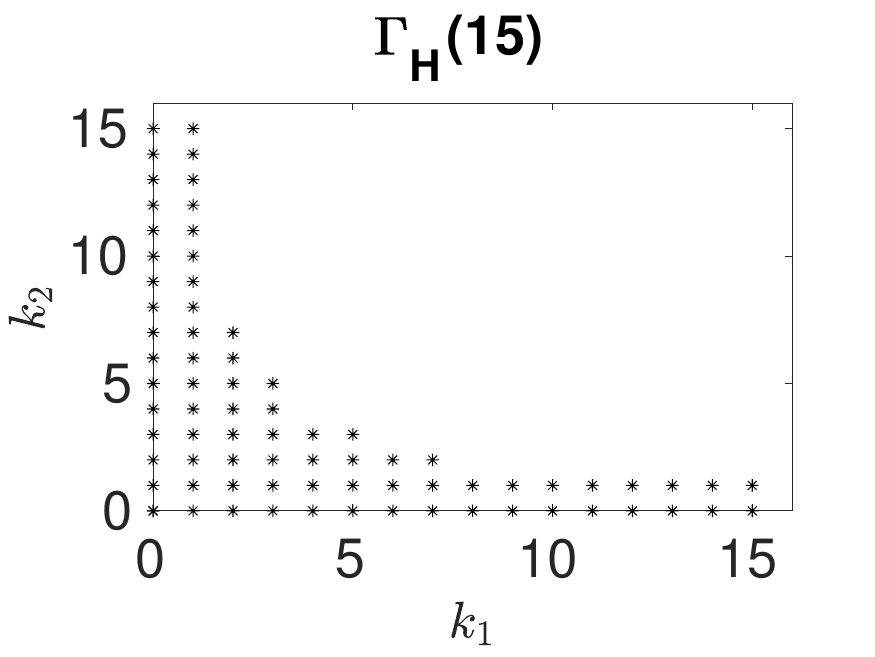}}
  \subfigure{\includegraphics[height=3.0cm]{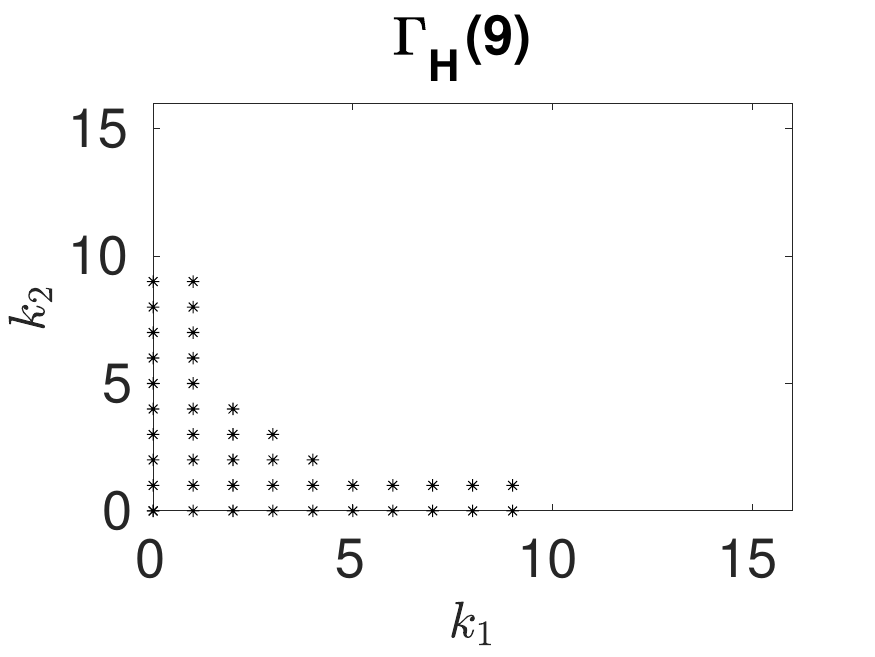}}
  \caption{For $d=2$ examples of total and hyperbolic sets. Note that $\Gamma_H(9) \subset \Gamma_H(15) \subset \Gamma_T(16)$.}
  \label{fig:totalHyperbolic}
\end{figure}

\begin{table}[ht]
\caption{Cardinalities of $\Gamma_T$ and $\Gamma_H$.}
\label{tab:totalHyperbolic}
\begin{center}\begin{tabular}{|c|c|c||c|c|c|}
    \hline
    \multicolumn{3}{|c||}{$d=3$} & \multicolumn{3}{|c|}{$d=4$} \\
    \hline
    $\DEG$ & $\#\Gamma_T(\DEG)$ & $\#\Gamma_H(\DEG-2)$ & $\DEG$ & $\#\Gamma_T(\DEG)$ & $\#\Gamma_H(\DEG-3)$ \\
    \hline    
    $6$ & $84$ & $50$ & $5$ & $126$ & $48$ \\
    \hline
    $8$ & $165$ & $86$ & $7$ & $330$ & $136$ \\
    \hline
    $10$ & $286$ & $123$ & $9$ & $715$ & $248$ \\
    \hline
    $12$ & $455$ & $165$ & $11$ & $1365$ & $368$ \\
    \hline        
\end{tabular}
\end{center}
\end{table}
\end{center}

\subsubsection{Multidimensional case} \label{sec:approError:Multidimensional}
In this section, we let $\nu$ be the Student density \eqref{eq:hnu}-\eqref{eq:studentDensity} in dimension $d$. In the first part of the section only $y_i$ and its coordinate-wise derivatives are assumed to be smooth. In the second part, on top of that mixed derivatives of $y_i$ will be supposed smooth. Let us start with the first case.
\begin{lemma} \label{lemma:lr_lims_zero:d}
Assume that $y_i$ is $r$ times ($r\in\N$) coordinate-wise continuously differentiable on $\R^d$, with the existence of a polynomial growth degree $p\in\N$ such that 
\begin{align}
\limsup_{|\mbx| \to + \infty} \dfrac{|\pdyi{n}{l}(\mbx)|}{|\mb x|^{p-n}}<+\infty,
\quad \text{for} \qquad n=0,\ldots,r, \quad l=1,\dots, d.
\label{eq:poly:growth:dyi:d}
\end{align}
Assume the relation 
\begin{equation}\label{eq:lr_lims_zero:d}   p+r\mu < q \end{equation}                                 
between $r$, $p$, the Student parameter $\mu>0$ and the damping exponent $q\geq 0$. Then $\hqi$ is $r$ times continuously differentiable on $(0,1)^d$, and its first $r$ coordinate-wise derivatives vanish at $\partial[0,1]^d$:
$$\lim_{\mbu\to\partial[0,1]^d} \pdhqi{n}{u_l}(\mbu) = 0, \qquad n=0,\ldots,r, \quad  l=1,\ldots,d.$$
\end{lemma}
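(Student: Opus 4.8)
The statement is the $d$-dimensional analogue of Lemma~\ref{lemma:lr_lims_zero}, so the plan is to reduce to the one-dimensional argument already carried out, coordinate by coordinate. Fix a coordinate $l\in\{1,\dots,d\}$ and freeze the other variables $x_{l'}$ ($l'\neq l$). Writing $\yqi(\mbx) = y_i(\mbx)/(1+|\mbx|^2)^{q/2}$, I would first note that as $|x_l|\to\infty$ with the other coordinates bounded, the factor $(1+|\mbx|^2)^{-q/2}$ behaves like $|x_l|^{-q}$ up to constants depending on the frozen variables, and the hypothesis \eqref{eq:poly:growth:dyi:d} gives $|\partial^n_{x_l} y_i(\mbx)| = O(|x_l|^{p-n})$ in that regime. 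Hence, by the same Leibniz-rule computation as in the proof of Lemma~\ref{lemma:lr_lims_zero} (leading to \eqref{eq:dyqi}), one gets $\partial^n_{x_l}\yqi(\mbx) = O(|x_l|^{p-n-q})$ as $|x_l|\to\pm\infty$, uniformly for the other coordinates in compact sets — this uniformity is the only extra bookkeeping relative to the $1$-D case.

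Next, I would express $\hqi = \yqi\circ F_\nu^{-1}$ and observe that, because $F_\nu$ acts coordinate-wise ($F_\nu(\mbx) = (\Fnuo(x_1),\dots,\Fnuo(x_d))$, by \eqref{eq:phi:dim:d}-\eqref{eq:C:dim:d} and the definitions preceding Lemma~\ref{lemma:eqInnerNorms}), the partial derivative $\partial^n_{u_l}\hqi(\mbu)$ involves only the $l$-th coordinate change of variable $\Fnuoinv$. So the one-dimensional Fa\`a di Bruno expansion \eqref{eq:1dFaaDiBruno} applies verbatim with $x$ replaced by $x_l$ and the remaining coordinates held fixed, and together with the bound $\partial^m_{x_l}\yqi = O(|x_l|^{p-m-q})$ and the estimates on $\dnFnulinvapp{j}{l}$ from Lemma~\ref{lemma:dFnulinv:app} (as used for \eqref{eq:limit:ddhqi:0}-\eqref{eq:limit:ddhqi:1}), this yields
\[
\partial^n_{u_l}\hqi(\mbu) = O\!\left(\bigl(\min(u_l,1-u_l)\bigr)^{-\frac{p+n\mu-q}{\mu}}\right)
\]
as $u_l\to 0^+$ or $u_l\to 1^-$, with the implied constant locally bounded in the other coordinates. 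Under the hypothesis \eqref{eq:lr_lims_zero:d}, namely $p+r\mu<q$, the exponent $-(p+n\mu-q)/\mu$ is strictly positive for all $n\le r$, so this tends to $0$. Since $\partial[0,1]^d$ is reached by letting at least one coordinate approach $0$ or $1$, and the bound degenerates to $0$ as soon as the $l$-th coordinate does so (the dependence on the others being locally bounded), one concludes $\lim_{\mbu\to\partial[0,1]^d}\partial^n_{u_l}\hqi(\mbu)=0$ for each $l$ and each $n=0,\dots,r$. The regularity claim ($\hqi$ is $r$ times coordinate-wise $C^1$ on $(0,1)^d$) follows as in the $1$-D case from the smoothness of $y_i$, of $(1+|\cdot|^2)^{-q/2}$, and of $\Fnuoinv$ on $(0,1)$ via Lemma~\ref{lemma:dFnulinv:app}.

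The main obstacle I anticipate is not conceptual but a matter of care with the uniformity in the frozen variables: when $u_l\to 0$ or $1$ while some other $u_{l'}$ simultaneously approaches the boundary, one must check that the "constants" in the $O(\cdot)$ estimates do not blow up. This is handled by noting that $(1+|\mbx|^2)^{-q/2}\le (1+x_l^2)^{-q/2}$ and more precisely that for $|x_l|$ large the ratio $(1+|\mbx|^2)/(1+x_l^2)$ stays bounded below by $1$, so the decay in $x_l$ dominates regardless of the other coordinates; any additional growth from the frozen $y_i$-derivatives in those coordinates is, by \eqref{eq:poly:growth:dyi:d}, at most polynomial and hence absorbed once $q$ is large enough in the sense of \eqref{eq:lr_lims_zero:d}. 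With that observation in place the proof is a routine coordinate-wise repetition of Lemma~\ref{lemma:lr_lims_zero}, and I would simply say so rather than rewriting the Fa\`a di Bruno bookkeeping in full.
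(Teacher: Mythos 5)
There is a genuine gap in your concluding step. The boundary limit in the lemma is over all of $\partial[0,1]^d$, which includes sequences along which $u_l$ stays in a compact subset of $(0,1)$ while some \emph{other} coordinate $u_{l'}$, $l'\neq l$, tends to $0$ or $1$. Your final bound for $\pdlhqi{n}{l}(\mbu)$ depends only on $\min(u_l,1-u_l)$, with constants merely ``locally bounded in the other coordinates'', so it gives no decay at all in that regime; your sentence ``the bound degenerates to $0$ as soon as the $l$-th coordinate does so, hence the limit over $\partial[0,1]^d$ is $0$'' is a non sequitur, because the $l$-th coordinate need not approach the boundary. The patch you sketch at the end does not repair this: replacing $(1+|\mbx|^2)^{-q/2}$ by $(1+x_l^2)^{-q/2}$ discards exactly the decay you need, since the hypothesis \eqref{eq:poly:growth:dyi:d} allows $\partial^{n-m}_{x_l}y_i(\mbx)$ to grow like $|\mbx|^{p-(n-m)}$, i.e.\ driven by the frozen coordinates, and a damping factor decaying only in $x_l$ cannot absorb growth in $x_{l'}$, no matter how large $q$ is.

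The correct route — and the one the paper takes — is to keep the full-norm decay throughout: by Leibniz and Lemma \ref{lemma:1overDumpingFactor:app} one gets $|\pddyqi{n}{l}(\mbx)|\leq C(1+|\mbx|^2)^{(p-n-q)/2}$ (estimate \eqref{eq:dyqi:d}), which tends to $0$ as $|\mbx|\to\infty$ in \emph{any} direction because $p<q$; then the one-dimensional Fa\`a di Bruno expansion in $u_l$ together with Lemmas \ref{lemma:Fnui:Fnulinv:app} and \ref{lemma:dFnulinv:app} yields the symmetric bound
\begin{align*}
\left|\pdlhqi{n}{l}(\mbu)\right| \leq C\left(1+\sum_{i=1}^d \frac{1}{u_i^{1/\mu}(1-u_i)^{1/\mu}}\right)^{p-q+n\mu},
\end{align*}
as in \eqref{eq:limit:ddhqi:0:d}, whose base blows up as soon as \emph{any} coordinate of $\mbu$ approaches $0$ or $1$; since the exponent $p-q+n\mu$ is negative under \eqref{eq:lr_lims_zero:d}, the derivative vanishes on the whole boundary. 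Apart from this uniformity issue your computational ingredients (Leibniz, coordinate-wise Fa\`a di Bruno, the estimates on $\Fnulinv{l}$ and its derivatives) coincide with the paper's; the fix is to track the dependence on all $d$ coordinates in the bound rather than freezing them in compacts.
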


\begin{proof} As in dimension 1, the regularities of $\yqi$ and $\hqi$ are straightforward to obtain. 
The statement $\lim_{\mbu\to\partial[0,1]^d} \hqi(\mbu) = 0$ is clear since  the limit coincides with that of 
$\yqi(\mbx)$ as $|\mbx|\to +\infty$, which equals to 0 in view of  $q>p$. Therefore, from now on, we may consider $n\geq 1$ and let $l=1,\dots,d$ be fixed.\\ 
In the following, to deal with comparison of functions  as $|\mbx|$ tends to $+ \infty$, 
we use the same notation $C$ for any generic constants, that do not depend on $\mbx$, but that may depend on the order $n,r,\dots$ of derivatives under consideration. We will keep the same $C$ from line to line, in order the alleviate the computations, although its value changes.

We start by showing that for $n=1,\dots, r$, 
\begin{equation}\label{eq:dyqi:d} |\pddyqi{n}{l}(\mbx)| \leq C (1+|\mbx|^2)^{p/2-n/2-q/2}.\end{equation}
By applying the Leibniz rule to the product \eqref{eq:yqi} (as in dimension 1, since we differentiate $n$ times with respect to the same variable $x_l$) we get 
\begin{align}
\label{eq:leibniz}\pddyqi{n}{l}(\mbx) = \sum_{m=0}^n {n\choose m} \mbox{ } \pdyi{n-m}{l}(\mbx) \mbox{ }\partial^m_{x_l} \dfrac{1}{(1+|\mbx|^2)^{q/2}}.
\end{align}
Our standing assumptions write
\begin{align}
\label{eq:standing:assumptions}
|\pdyi{n-m}{l}(\mbx)| \leq C(1+|\mbx|)^{p-(n-m)}.
\end{align}
From \eqref{eq:1overDumpingFactor:app} we know that
$$\left|\partial^m_{x_l} \dfrac{1}{(1+|\mbx|^2)^{q/2}}\right|\leq C( 1+|\mbx|^2)^{-q/2-m/2}.$$
By plugging the previous upper bound and \eqref{eq:standing:assumptions} into the Leibniz formula \eqref{eq:leibniz}, we get
\begin{align*}
\left|\pddyqi{n}{l}(\mbx)\right| &\leq  C\max_{0\leq m \leq n}(1+|\mbx|)^{p-(n-m)}(1+|\mbx|^2)^{-q/2-m/2} \\
&\leq C (1+|\mbx|^2)^{p/2-q/2-n/2},
\end{align*}
we are done with \eqref{eq:dyqi:d}.


In order to compute the $n$-\textit{th} derivative of $\hqi(\mbu)= \yqi(\Fnulinv{1}(u_1),\ldots,\Fnulinv{d}(u_d))$ we again take advantage of the Fa\`a di Bruno's formula (as in \eqref{eq:1dFaaDiBruno})
$$
\pddlhqi{n}{l}(\mbu) = \sum_{m=1}^n \frac1{m!}\pddyqi{m}{l}(\mbx)\big|_{\mbx=(\Fnulinv{1}(u_1),\ldots,\Fnulinv{d}(u_d))} \sum_{\mb{j} \in J_{m,n} }\dfrac{n!}{j_1!j_2!\cdots j_m!}  \prod_{i=1}^m \dnFnulinv{j_i}{l}.
$$
Combining the above with Lemma \ref{lemma:Fnui:Fnulinv:app}  and the estimates \eqref{eq:dFnulinv:app}, we obtain
\begin{align}
&\left| \pddlhqi{n}{l}(\mbu)\right| \leq C \max_{1\leq m \leq n,\ \mb{j}\in J_{m,n}} \left(1+\sum_{i=1}^d| \Fnulinv{i}(u_i)|^2\right)^{p/2-q/2-m/2}
\prod_{i=1}^m \left|\dnFnulinv{j_i}{l}\right|  \nonumber \\
           &
           \leq C \max_{1\leq m \leq n,\ \mb{j}\in J_{m,n}} \left(1+\sum_{i=1}^d \frac{1}{u_i^{1/\mu}(1-u_i)^{1/\mu} }\right)^{p-q-m}
\left(\frac{1}{u_l^{1/\mu}(1-u_l)^{1/\mu} }\right)^{m+\mu\sum_{i=1}^m j_i}\nonumber\\
&           \leq C \max_{1\leq m \leq n} \left(1+\sum_{i=1}^d \frac{1}{u_i^{1/\mu}(1-u_i)^{1/\mu} }\right)^{p-q-m}
\left(1+\sum_{i=1}^d \frac{1}{u_i^{1/\mu}(1-u_i)^{1/\mu} }\right)^{m+n\mu}\nonumber\\
&           \leq C  \left(1+\sum_{i=1}^d \frac{1}{u_i^{1/\mu}(1-u_i)^{1/\mu} }\right)^{p-q+n\mu}.
\label{eq:limit:ddhqi:0:d}
\end{align}
Since $p+n\mu<q$, we get that $\lim_{\mbu \to \partial [0,1]^d} \pddlhqi{n}{l}(\mbu)=0$.
\qed \end{proof}

\begin{lemma}\label{lemma:lr_norms_bounded:d}
Assume the hypotheses of Lemma \ref{lemma:lr_lims_zero:d} and replace the condition \eqref{eq:lr_lims_zero:d} by the weaker condition
\begin{equation}
\label{eq:lr_lims_zero:weaker:d} 
p+\left(r-\frac{1}{2}\right) \mu< q.
\end{equation}
Then the $r$ first coordinate-wise derivatives of $\hqi$ are square integrable on $(0,1)^d$:
$$\norm{\pdlhqi{n}{l}(\mbu)}^2_{\LU} < +\infty, \qquad n=0,\ldots,r, \quad  l=1,\ldots,d.$$
\end{lemma}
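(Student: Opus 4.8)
The plan is to follow the one-dimensional argument of Lemma~\ref{lemma:lr_norms_bounded} almost verbatim, exploiting that the substantial analytic work has already been carried out in the proof of Lemma~\ref{lemma:lr_lims_zero:d}. First I would observe that, by the regularity part of Lemma~\ref{lemma:lr_lims_zero:d}, every $\pdlhqi{n}{l}$ with $n\le r$ is continuous on the open cube $(0,1)^d$, hence automatically square integrable on each compact subcube $[\varepsilon,1-\varepsilon]^d$; thus only the behaviour near the boundary $\partial[0,1]^d$ needs to be controlled. The case $n=0$ is immediate, since $\hqi$ is bounded (as $q>p$, which is forced by \eqref{eq:lr_lims_zero:weaker:d}). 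For $1\le n\le r$ I would invoke the pointwise bound \eqref{eq:limit:ddhqi:0:d},
\begin{equation*}
\left|\pddlhqi{n}{l}(\mbu)\right|\le C\left(1+\sum_{i=1}^d\frac{1}{u_i^{1/\mu}(1-u_i)^{1/\mu}}\right)^{p-q+n\mu},
\end{equation*}
which, importantly, was established in the proof of Lemma~\ref{lemma:lr_lims_zero:d} using only the polynomial growth assumption \eqref{eq:poly:growth:dyi:d} and the a priori estimates on $\Fnulinv{l}$, and not the stronger relation \eqref{eq:lr_lims_zero:d}.

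Next I would put $\beta_n:=p-q+n\mu$ and note that, since $n\le r$, the weakened hypothesis \eqref{eq:lr_lims_zero:weaker:d} gives $\beta_n\le\beta_r<\mu/2$. If $\beta_n\le 0$, the base of the power above is $\ge 1$, so the bound is $\le C$ and $\pdlhqi{n}{l}$ is bounded on $(0,1)^d$, a fortiori square integrable. If $0<\beta_n<\mu/2$, I would factorize using the elementary inequality $1+\sum_{i=1}^d a_i\le\prod_{i=1}^d(1+a_i)$, valid for $a_i\ge 0$, to get
\begin{equation*}
\left|\pddlhqi{n}{l}(\mbu)\right|^2\le C^2\prod_{i=1}^d\left(1+\frac{1}{u_i^{1/\mu}(1-u_i)^{1/\mu}}\right)^{2\beta_n},
\end{equation*}
so that, by Tonelli's theorem, $\norm{\pdlhqi{n}{l}}^2_{\LU}\le C^2\left(\int_0^1\left(1+u^{-1/\mu}(1-u)^{-1/\mu}\right)^{2\beta_n}\du\right)^{d}$. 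Each one-dimensional integral is finite because near $u=0$ (resp. $u=1$) its integrand is $O(u^{-2\beta_n/\mu})$ (resp. $O((1-u)^{-2\beta_n/\mu})$) with $2\beta_n/\mu<1$, which is exactly the integrability threshold already used in Lemma~\ref{lemma:lr_norms_bounded}.

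I do not expect any real obstacle: the delicate part, namely the near-boundary control of the coordinate-wise derivatives of $\hqi$, is imported directly from Lemma~\ref{lemma:lr_lims_zero:d}, and the rest is elementary. The only step deserving a little attention is the factorization of $\bigl(1+\sum_i a_i\bigr)^{2\beta_n}$ into a product, which turns the $d$-dimensional integral into a product of convergent one-dimensional Beta-type integrals, together with the bookkeeping showing that $2\beta_n/\mu<1$ holds for every $n\le r$ precisely under \eqref{eq:lr_lims_zero:weaker:d}.
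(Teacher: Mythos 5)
Your proof is correct and follows essentially the same route as the paper: both rest on the pointwise bound \eqref{eq:limit:ddhqi:0:d} imported from the proof of Lemma \ref{lemma:lr_lims_zero:d} (which, as you note, does not use \eqref{eq:lr_lims_zero:d}), together with the observation that \eqref{eq:lr_lims_zero:weaker:d} puts the exponent below the square-integrability threshold $2(p-q+r\mu)<\mu$. The only difference is the elementary verification of the resulting boundary integral: you factorize via $1+\sum_{i=1}^d a_i\le\prod_{i=1}^d(1+a_i)$ and apply Tonelli to reduce to a product of one-dimensional Beta-type integrals, whereas the paper symmetrizes to $(0,1/2)^d$, changes variables $z_i=u_i^{-1/(2\mu)}$ and uses $\bigl(1+\sum_{i=1}^d x_i\bigr)^\alpha\le C_{\alpha,d}\bigl(1+\sum_{i=1}^d x_i^\alpha\bigr)$ — both are equally valid.
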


\begin{proof}
In view of the bound \eqref{eq:limit:ddhqi:0:d}, it is sufficient to check the  integrability on $(0,1)^d$ of 
$$l(\mbu):=\left(1+\sum_{i=1}^d \frac{1}{u_i^{1/\mu}(1-u_i)^{1/\mu} }\right)^{2(p-q+r\mu)}.$$
Observe that the function is symmetric with respect to each hyperplan $\{\mbu\in(0,1)^d:u_l=\frac{1}{2 }\}$, $l=1,\dots, d$. Therefore, only the integrability on $(0,1/2)^d$ has to be established, which is equivalent to prove 
$$\int_{(0,1/2)^d} \left(1+\sum_{i=1}^d \frac{1}{u_i^{1/\mu} }\right)^{2(p-q+r\mu)} \duu<+\infty$$
Use the change of variables $z_i=u_i^{-1/{2\mu}}$, then the above integral writes (up to a non-zero factor) 
$${\cal I}:=\int_{(2^{1/(2\mu)},+\infty)^d} \left(1+\sum_{i=1}^d z^2_i \right)^{\mu-\varepsilon} \left(\prod_{i=1}^d z^{-2\mu-1}_i \right)\dd {\bf z},$$
where  $\varepsilon>0$ is given by $2(p-q+r\mu)=\mu-\varepsilon$. To show that ${\cal I}<+\infty
$, we may decrease $\varepsilon\downarrow 0$ as much as needed to get  $\mu-\varepsilon>0$. Once done, using  the inequality 
$$(x_1+x_2)^\alpha \leq 2^{(\alpha-1)_+}(x_1^\alpha+x_2^\alpha),$$
available for any $x_1\geq 0,x_2\geq 0,\alpha>0$, it yields  (by direct induction) the existence of a constant $C_{\alpha,d}$ such that 
$$\left(1+\sum_{i=1}^d x_i\right)^\alpha \leq C_{\alpha,d} \left(1+\sum_{i=1}^d x_i^\alpha\right)$$
for any $x_1\geq 0,\dots, x_d\geq 0$.  Consequently,
$${\cal I}\leq C_{\mu-\varepsilon,d}\int_{(2^{1/(2\mu)},+\infty)^d} \left(1+\sum_{i=1}^d z_i^{2(\mu-\varepsilon)}\right) \left(\prod_{i=1}^d z^{-2\mu-1}_i \right)\dd {\bf z}<+\infty.$$ We are done.
\qed \end{proof}

\begin{proposition} \label{proposition:approErrord} 
Assume that $y_i$ is $r$ times ($r\geq 1$) coordinate-wise continuously differentiable on $\R^d$ with the existence of a polynomial growth degree $p \in \N$ such that \eqref{eq:poly:growth:dyi:d} holds. Assume the relation \eqref{eq:lr_lims_zero:weaker:d} holds. Then, the coefficients $(\Aq, \mbk\in\N^d)$ defined in \eqref{eq:fourier} satisfy the tail inequality 
\begin{align}\sum_{\mbk \in \bar{\Gamma}_F(K_1,\ldots,K_d)} |\Aq|^2& \leq \sum_{l=1}^d \dfrac{C^{(l,r)}_{\eqref{eq:tail:inequality:coeff:dimd}}}{{(K_l+1)}^{2r}}
\label{eq:tail:inequality:coeff:dimd}
\end{align}
where $\displaystyle C^{(l,r)}_{\eqref{eq:tail:inequality:coeff:dimd}} := \dfrac{1}{\pi^{2r}}\norm{\pdlhqi{r}{l}}^2_{\LU}<+\infty.$
 \end{proposition}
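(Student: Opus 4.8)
The plan is to run the one-dimensional argument of Proposition~\ref{proposition:approError1} separately in each coordinate direction. Recall from \eqref{eq:fourier}--\eqref{eq:hqi} that the $\Aq$ are the Fourier coefficients of $\hqi=\yqi\circ F_\nu^{-1}\in\LU$ with respect to the tensorized cosine basis $(C_{\mbk})$. First I would record that the assumed relation \eqref{eq:lr_lims_zero:weaker:d} implies $p+(r-1)\mu<q$, so that Lemma~\ref{lemma:lr_lims_zero:d} (applied with $r-1$ derivatives) yields, for each $l=1,\dots,d$,
\[
\lim_{\mbu\to\partial[0,1]^d}\pdlhqi{n}{l}(\mbu)=0,\qquad n=0,\dots,r-1,
\]
while Lemma~\ref{lemma:lr_norms_bounded:d} (applied with $r$ derivatives) gives $\norm{\pdlhqi{n}{l}}_{\LU}<+\infty$ for $n=0,\dots,r$. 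These are exactly the two ingredients needed to iterate integration by parts $r$ times in the single variable $u_l$.

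Next, fixing a coordinate $l$ and writing $C_{\mbk}(\mbu)=C^{(l)}_{k_l}(u_l)\prod_{j\neq l}C^{(j)}_{k_j}(u_j)$, for any $\mbk$ with $k_l\geq1$ I would integrate the inner $u_l$-integral defining $\Aq$ by parts $r$ times with the remaining coordinates frozen, invoking Fubini together with the vanishing of the first $r-1$ $u_l$-derivatives of $\hqi$ at $u_l\in\{0,1\}$ and the $\LU$-integrability of $\pdlhqi{r}{l}$. Exactly as in \eqref{eq:oneDimTrunErrorIntParts1}--\eqref{eq:oneDimTrunErrorIntPartsr}, this produces
\[
\Aq=\frac{1}{(k_l\pi)^r}\int_{[0,1]^d}\pdlhqi{r}{l}(\mbu)\,\Re{\sqrt2\,\mb{i}^r e^{\mb{i}k_l\pi u_l}}\prod_{j\neq l}C^{(j)}_{k_j}(u_j)\,\duu.
\]
Since the functions $u_l\mapsto\Re{\sqrt2\,\mb{i}^r e^{\mb{i}k_l\pi u_l}}$ ($k_l\geq1$) are orthonormal on $[0,1]$ and the one-dimensional cosine families are orthonormal, the resulting tensor family (indexed by those $\mbk$ with $k_l\geq1$) is orthonormal in $\LU$; hence $\{(k_l\pi)^r\Aq:k_l\geq1\}$ are the projection coefficients of $\pm\pdlhqi{r}{l}$, and Bessel's inequality gives $\sum_{\mbk\in\N^d,\,k_l\geq1}(k_l\pi)^{2r}|\Aq|^2\leq\norm{\pdlhqi{r}{l}}^2_{\LU}=\pi^{2r}C^{(l,r)}_{\eqref{eq:tail:inequality:coeff:dimd}}$.

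To finish, I would use the inclusion $\bar{\Gamma}_F(K_1,\dots,K_d)=\{\mbk:\exists l,\ k_l>K_l\}\subseteq\bigcup_{l=1}^d\{\mbk:k_l\geq K_l+1\}$ together with the bound $(k_l\pi)^{2r}\geq((K_l+1)\pi)^{2r}$ valid on $\{k_l\geq K_l+1\}$, obtaining $\sum_{\mbk\in\bar{\Gamma}_F}|\Aq|^2\leq\sum_l((K_l+1)\pi)^{-2r}\sum_{\mbk:\,k_l\geq K_l+1}(k_l\pi)^{2r}|\Aq|^2\leq\sum_l C^{(l,r)}_{\eqref{eq:tail:inequality:coeff:dimd}}/(K_l+1)^{2r}$, which is \eqref{eq:tail:inequality:coeff:dimd}. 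The main obstacle is the careful justification of the one-variable integration by parts with the remaining coordinates as parameters — that the boundary terms vanish for (a.e.) fixed $(u_j)_{j\neq l}$, which is precisely what Lemma~\ref{lemma:lr_lims_zero:d} supplies, and that the use of Fubini is legitimate thanks to the square-integrability from Lemma~\ref{lemma:lr_norms_bounded:d}; once this and the orthonormality of the mixed $\cos$/$\Re(\cdot)$ tensor family are in place, everything else is bookkeeping identical to the one-dimensional proof.
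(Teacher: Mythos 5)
Your proposal is correct and follows essentially the same route as the paper's proof: reduce via the union bound over coordinates, invoke Lemma~\ref{lemma:lr_lims_zero:d} with $r-1$ derivatives and Lemma~\ref{lemma:lr_norms_bounded:d} with $r$ derivatives, perform $r$ integrations by parts in the single variable $u_l$, and conclude with orthonormality of the mixed trigonometric tensor family and Bessel's inequality. The only cosmetic difference is that you treat a generic coordinate $l$ directly while the paper fixes $l=1$ without loss of generality; the bookkeeping is otherwise identical.
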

\begin{proof} By definition of ${\Gamma}_F(K_1,\ldots,K_d)$, we have the union bound
$$
\{\mbk \in \bar{\Gamma}_F(K_1,\ldots,K_d)\}=\bigcup_{l=1}^d\{\mbk \in \N^d:k_l>K_l\},$$
therefore the advertised inequality \eqref{eq:tail:inequality:coeff:dimd} directly follows from 
\begin{align}
\label{eq:eq:tail:inequality:coeff:dimd:coordonnee1}
\sum_{\mbk \in \N^d:k_l>K_l} |\Aq|^2 \leq \frac{C^{(l,r)}_{\eqref{eq:tail:inequality:coeff:dimd}}}{(K_l+1)^{2r}}, \qquad l=1,\dots,d.
\end{align}
We only prove the case $l=1$, the arguments apply similarly for other $l$.

Under \eqref{eq:lr_lims_zero:weaker:d}, we can invoke Lemma \ref{lemma:lr_lims_zero:d} (with $r-1$ derivatives) and Lemma \ref{lemma:lr_norms_bounded:d} (with $r$ derivatives), to get
\begin{equation}\label{eq:controle:derivee:d}
\begin{split}
 & \lim_{\mbu\to\partial[0,1]^d}\pdlhqi{n}{1}(\mbu) = 0, \quad \text{for } n=0,\ldots,r-1,\\
 & \norm{\pdlhqi{n}{1}}^2_{\LU} < +\infty,\quad \text{for } n=0,\ldots,r.
\end{split}
\end{equation}
In view of \eqref{eq:fourier}-\eqref{eq:hqi}, the Fourier coefficients of the decomposition of $\hqi$ with respect to the orthonormal basis functions $\{C_{\mbk}, \mbk \in\N^d\}$ in $\LU$ are given by $(\Aq,\mbk\in\N^d)$n and the $C_{\mbk}$-cosine series of $\hqi$ writes $$\hqi(u) = \sum_{\mbk \in \N^d} \Aq C_{\mbk}(u),$$ where
\begin{align*}
\Aq &=\beta_{\mbk}\int_{[0,1]^d} \hqi(\mbu) \cos(k_1\pi u_1) \cos(k_2\pi u_2) \cdots \cos(k_d\pi u_d) \duu \\
    & = \beta_{\mbk}\Re{\int_{[0,1]^d} \hqi(\mbu) e^{\mb{i}k_1\pi u_1} \cos(k_2\pi u_2) \cdots \cos(k_d\pi u_d) \duu},\\
  \beta_{\mbk}&:= \prod_{l=1}^d (\1_{k_l=0}+\1_{k_l>0}\sqrt 2 ).
\end{align*}
Applying integration by parts in $u_1$ ($\hqi$ is uniformly bounded on $(0,1)^d$, continuously differentiable and the derivative with respect to $u_1$ is  integrable) and using that $\lim_{u_1\to 0^+,1^-}\hqi(\mbu) = 0$, we get for $k_1> 0$
\begin{align*} 
\Aq 
 & = {\rm{Re}}\Bigg( \dfrac{\beta_{\mbk}}{\mb{i}k_1\pi} \left[ \int_{[0,1]^{d-1}} \hqi(\mbu) e^{\mb{i}k_1\pi u_1} \cos(k_2\pi u_2) \cdots \cos(k_d\pi u_d)  \du_2 \ldots \du_d \right]^{u_1=1^-}_{u_1=0^+}   \\
 & \qquad \qquad - \dfrac{\beta_{\mbk}}{\mb{i}k_1\pi} \int_{[0,1]^d} \pdlhqi{}{1}(\mbu) e^{\mb{i}k_1\pi u_1} \cos(k_2\pi u_2) \cdots \cos(k_d\pi u_d) \duu \Bigg)  \\
& =\Re{\dfrac{\beta_{\mbk}}{-\mb{i}k_1\pi}\int_{[0,1]^d} \pdlhqi{}{1}(\mbu) e^{\mb{i}k_1\pi u_1} \cos(k_2\pi u_2) \cdots \cos(k_d\pi u_d) \duu}.
\end{align*}
Thanks to \eqref{eq:controle:derivee:d}, we can repeat $r-1$ times the integration by parts as above, and we obtain
\begin{align} \label{eq:multiDimTrunErrorIntPartsr}
\Aq  
&=\Re{\dfrac{\beta_{\mbk}}{(-\mb{i}k_1\pi)^r}\displaystyle\int_{[0,1]^d} \pdlhqi{r}{1}(\mbu) e^{\mb{i}k_1\pi u_1} \cos(k_2\pi u_2) \cdots \cos(k_d\pi u_d) \duu}  \nonumber \\
& =\dfrac{1}{(k_1\pi)^r}
\displaystyle\int_{[0,1]^d} \pdlhqi{r}{1}(\mbu)\Re{\beta_{\mbk} \mb{i}^r  e^{\mb{i}k_1\pi u_1} \cos(k_2\pi u_2) \cdots \cos(k_d\pi u_d)}\duu.
\end{align}
Observe that the sets  
\begin{align*}
 & \{\mbu\mapsto \beta_{\mbk} \cos(k_1 \pi u_1)\cos(k_2 \pi u_2)\cdots \cos(k_d \pi u_d),\mbk \in \N^d \}, \\
 & \{\mbu\mapsto \beta_{\mbk} \sin(k_1 \pi u_1)\cos(k_2 \pi u_2)\cdots \cos(k_d \pi u_d),\mbk \in \N^d \}, 
\end{align*}
are made of orthonormal functions in $\LU$. Consequently, $$\left\{\Re{\beta_{\mbk} \mb{i}^r e^{\mb{i}k_1\pi u_1} \cos(k_2\pi u_2) \cdots \cos(k_d\pi u_d)},\mbk \in \N^d \right\}$$ is (for any value of $r$) a set of orthonormal functions and the equality \eqref{eq:multiDimTrunErrorIntPartsr} means that $\{(k_1\pi)^r\Aq:\mbk \in \N^d\}$ are the projection coefficients of $\pm\pdlhqi{r}{1}$ on these orthonormal functions. As such, the Bessel inequality gives
\begin{align*}
\sum_{\mbk \in \N^d}(k_1\pi )^{2r} |\Aq|^2\leq \norm{\pdlhqi{r}{1}}^2_{\LU}=\pi^{2r}C^{{(1,r)}}_{\eqref{eq:tail:inequality:coeff:dimd}}.
\end{align*}
It readily follows
$$\sum_{\mbk \in \N^d: k_1>K_1} |\Aq|^2 \leq\frac{1}{\pi^{2r}} \sum_{\mbk \in \N^d: k_1>K_1} \dfrac{(k_1\pi)^{2r}}{(K_1+1)^{2r}}|\Aq|^2 \leq \dfrac{C^{{(1,r)}}_{\eqref{eq:tail:inequality:coeff:dimd}}}{(K_1+1)^{2r}}.$$
The proof of \eqref{eq:eq:tail:inequality:coeff:dimd:coordonnee1} is now finished and Proposition \ref{proposition:approErrord} is proved. \qed
\end{proof}

Now we consider that not only $y_i$ and its coordinate-wise derivatives are smooth but also its mixed derivatives.
\begin{lemma} \label{lemma:lr_lims_zero:d:Multivariate}
Let ${\Lambda}\subset\N^d$ be an arbitrary downward closed multi-index set {and set $r =\max_{\mb{r}\in {\Lambda}}(\overline{\mb{r}})$.}
Assume that $y_i$ is ${\Lambda}$-smooth with the existence of a polynomial growth degree $p\in\N$ such that 
\begin{align}
\limsup_{|\mbx| \to + \infty} \dfrac{|{\partial^{\mb{n}}_x} y_i(\mbx)|}{|\mb x|^{p-\overline{\mb{n}}}}<+\infty,
\quad \text{for} \quad \mb{n}\in {\Lambda}.
\label{eq:poly:growth:dyi:d:Multivariate}
\end{align}
Assume the relation 
\begin{equation}\label{eq:lr_lims_zero:d:Multivariate}   p+r\mu < q \end{equation}                                 
between $r$, $p$, the Student parameter $\mu>0$ and the damping exponent $q\geq 0$. Then $\hqi$ is {${\Lambda}$-smooth on $(0,1)^d$ and its derivatives} vanish at $\partial[0,1]^d$:
$$\lim_{\mbu\to\partial[0,1]^d} {\partial^{\mb{n}}_u} \hqi(\mbu) = 0, \qquad \mb{n}\in {\Lambda}.$$
\end{lemma}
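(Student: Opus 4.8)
The statement is the multivariate analogue of Lemma \ref{lemma:lr_lims_zero:d}, where the single coordinate-wise derivative $\pddlhqi{n}{l}$ is replaced by a mixed derivative ${\partial^{\mb{n}}_u}\hqi$ with $\mb{n}\in\Lambda$. The plan is to follow the same two-stage scheme as in Lemma \ref{lemma:lr_lims_zero:d}: first establish a polynomial-type bound on the mixed derivatives ${\partial^{\mb{n}}_x}\yqi$ of the damped value function, and then push this bound through the change of variables $\mbx=F_\nu^{-1}(\mbu)$ via a multivariate Faà di Bruno formula, using the a priori estimates on $\Fnulinv{l}$ and its derivatives from the Appendix (Lemma \ref{lemma:Fnui:Fnulinv:app} and \eqref{eq:dFnulinv:app}). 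The reduction to the case $r=\max_{\mb r\in\Lambda}(\overline{\mb r})$ handles the worst multi-index, and downward-closedness of $\Lambda$ ensures every lower-order mixed derivative needed along the way is also covered by the hypothesis \eqref{eq:poly:growth:dyi:d:Multivariate}.

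First I would prove that for every $\mb n\in\Lambda$,
\begin{align*}
\left|{\partial^{\mb{n}}_x}\yqi(\mbx)\right|\leq C\,(1+|\mbx|^2)^{(p-\overline{\mb n}-q)/2},
\end{align*}
by applying the multivariate Leibniz rule to the product \eqref{eq:yqi}: each term is a product of a partial derivative ${\partial^{\mb{m}}_x}y_i$ (with $\mb m\leq\mb n$, hence $\mb m\in\Lambda$ by downward-closedness, so \eqref{eq:poly:growth:dyi:d:Multivariate} applies and gives growth $O((1+|\mbx|)^{p-\overline{\mb m}})$) times a derivative of $(1+|\mbx|^2)^{-q/2}$ of order $\overline{\mb n}-\overline{\mb m}$, which by the appendix estimate \eqref{eq:1overDumpingFactor:app} is $O((1+|\mbx|^2)^{-q/2-(\overline{\mb n}-\overline{\mb m})/2})$; summing gives the claimed bound, exactly as in the derivation of \eqref{eq:dyqi:d} but with multi-indices. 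Next I would apply the multivariate Faà di Bruno formula to $\hqi(\mbu)=\yqi(\Fnuoinv(u_1),\dots,\Fnudinv(u_d))$. Since the change of variables is diagonal (the $l$-th new variable only affects the $l$-th old variable), the formula factorizes coordinate by coordinate: ${\partial^{\mb{n}}_u}\hqi(\mbu)$ is a finite sum of terms of the form $c\cdot{\partial^{\mb{m}}_x}\yqi(\mbx)\big|_{\mbx=F_\nu^{-1}(\mbu)}\prod_{l=1}^d\big(\text{product of derivatives }\dnFnulinvapp{\cdot}{l}(u_l)\big)$ with $\mb m\leq\mb n$ and the total order of the $\Fnulinv{l}$-derivatives in coordinate $l$ being exactly $n_l$. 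Plugging in the bound above together with the appendix estimates $|\Fnulinv{l}(u_l)|\leq C(u_l(1-u_l))^{-1/\mu}$ and $|\dnFnulinvapp{j}{l}(u_l)|\leq C(u_l(1-u_l))^{-(1+j\mu)/\mu}$ (uniformly in the relevant range), and bounding $1+\sum_i|\Fnulinv{i}(u_i)|^2\leq C(1+\sum_i(u_i(1-u_i))^{-1/\mu})$, one arrives — after collecting the exponents exactly as in \eqref{eq:limit:ddhqi:0:d} — at an estimate of the shape
\begin{align*}
\left|{\partial^{\mb{n}}_u}\hqi(\mbu)\right|\leq C\left(1+\sum_{i=1}^d\frac{1}{u_i^{1/\mu}(1-u_i)^{1/\mu}}\right)^{p-q+\,\overline{\mb n}\,\mu}.
\end{align*}
Since $\overline{\mb n}\leq r$ and $p+r\mu<q$ by \eqref{eq:lr_lims_zero:d:Multivariate}, the exponent $p-q+\overline{\mb n}\mu$ is strictly negative, so the right-hand side tends to $0$ as $\mbu\to\partial[0,1]^d$ (along any boundary face some $u_i(1-u_i)\to0$, making the bracket blow up), and the $\Lambda$-smoothness on $(0,1)^d$ follows from the smoothness of $\yqi$ and of the $\Fnulinv{l}$ on $(0,1)$ (Lemma \ref{lemma:dFnulinv:app}). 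The statement $\lim_{\mbu\to\partial[0,1]^d}\hqi(\mbu)=0$ (the case $\mb n=\mb 0$) is the same as in the coordinate-wise lemma and follows from $q>p$.

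The main obstacle I anticipate is purely bookkeeping rather than conceptual: writing out the multivariate Faà di Bruno formula cleanly enough that the coordinate-wise factorization is manifest, and then tracking the exponents of the singular factors $u_i^{-1/\mu}(1-u_i)^{-1/\mu}$ through the composition so that they aggregate to exactly $p-q+\overline{\mb n}\mu$. The key simplification that makes this manageable is that the transformation $F_\nu^{-1}$ acts diagonally, so the chain rule never mixes different coordinates and each coordinate contributes a one-dimensional Faà di Bruno sum identical to the one already handled in Lemma \ref{lemma:lr_lims_zero}; the only genuinely new ingredient over Lemma \ref{lemma:lr_lims_zero:d} is that the $x$-side derivative ${\partial^{\mb{m}}_x}\yqi$ is now a true mixed derivative, which is why downward-closedness of $\Lambda$ is invoked to guarantee the hypothesis covers all $\mb m\leq\mb n$.
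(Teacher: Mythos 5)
Your proposal is correct and follows essentially the same route as the paper: a multivariate Leibniz bound $|{\partial^{\mb{n}}_x}\yqi(\mbx)|\leq C(1+|\mbx|^2)^{(p-\overline{\mb n}-q)/2}$ (using downward-closedness of $\Lambda$), then the multivariate Fa\`a di Bruno formula for the diagonal change of variables together with the appendix estimates on $\Fnulinv{l}$ and its derivatives, yielding the bound with exponent $p-q+\overline{\mb n}\mu<0$. The only slip is a citation: the mixed derivative of the damping factor requires the multivariate estimate \eqref{eq:1overDumpingFactorMultivariate:app} (Lemma \ref{lemma:1overDumpingFactorMultivariate:app}) rather than the coordinate-wise \eqref{eq:1overDumpingFactor:app}, which the paper indeed provides.
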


\begin{proof}
We start by showing that for $\overline{ \mb{n}}> 0$, $\mb{n}\in {\Lambda}$, 
\begin{equation}\label{eq:dyqi:d:Multivariate} |{\partial^{\mb{n}}_x}\yqi(\mbx)| \leq C (1+|\mbx|^2)^{p/2-\overline{\mb{n}}/2-q/2}.\end{equation}
By applying the multivariate Leibniz rule to the product \eqref{eq:yqi}  we get 
\begin{align}
\label{eq:leibnizMultivariate}{\partial^{\mb{n}}_x} \yqi(\mbx) = \sum_{\{\mb{m}\in\N^d: \mb{m}\leq \mb{n}\}} C_{\mb{n},\mb{m}} \mbox{ } {\partial^{\mb{n}-\mb{m}}_x} y_i(\mb{x})\ {\partial^{\mb{m}}_x} \dfrac{1}{(1+|\mbx|^2)^{q/2}}.
\end{align}
Our standing assumptions write
\begin{align}
\label{eq:standing:assumptions:Multivariate}
|{\partial^{\mb{n}-\mb{m}}_x} y_i(\mbx)| \leq C(1+|\mbx|)^{p-\overline{(\mb{n}-\mb{m})}} {\leq} C(1+|\mbx|^2)^{p/2-\overline{\mb{n}}/2+\overline{\mb{m}}/2}.
\end{align}
From \eqref{eq:1overDumpingFactorMultivariate:app} we know that
$$\left|{\partial^{\mb{m}}_x} \dfrac{1}{(1+|\mbx|^2)^{q/2}}\right|\leq C( 1+|\mbx|^2)^{-q/2-\overline{\mb{m}}/2}.$$
By plugging the previous upper bound and \eqref{eq:standing:assumptions:Multivariate} into the Leibniz formula \eqref{eq:leibnizMultivariate}, we get
\begin{align*}
\left|{\partial^{\mb{n}}_x}\yqi(\mbx)\right| &\leq  C\max_{\mb{0}\leq \mb{m}\leq \mb{n}}(1+|\mbx|^2)^{p/2-\overline{\mb{n}}/2+\overline{\mb{m}}/2}(1+|\mbx|^2)^{-q/2-\overline{\mb{m}}/2} \\
&\leq C (1+|\mbx|^2)^{p/2-q/2-\overline{\mb{n}}/2},
\end{align*}
we are done with \eqref{eq:dyqi:d:Multivariate}.

{
Let $[*,\ldots,*]$ denotes a list. In order to compute the $\mb{n}$ ($\overline{\mb{n}}>0$, $\mb{n}\in\Lambda$ ) derivative of $\hqi(\mbu)= \yqi(\Fnulinv{1}(u_1),\ldots,\Fnulinv{d}(u_d))$ we take advantage of the multivariate Fa\`a di Bruno's formula derived from \cite[Theorem 2.1]{constantine:savits:96}:
\begin{align*}
{\partial^{\mb{n}}_u} \hqi(\mbu) &= \sum_{\mb{m} \in M_{\mb{n}}} {\partial^{\mb{m}}_x}\yqi(\mbx)\big|_{\mbx=(\Fnulinv{1}(u_1),\ldots,\Fnulinv{d}(u_d))} \sum_{J \in A_{\mb{m},\mb{n}} } C_{\mb{m},J} \prod_{{i=1}}^d \prod_{k=1}^{{m_i}} \dnFnulinv{j_{ik}}{i}, \\
M_{\mb{n}} &= \{ \mb{m}= (m_1,\ldots,m_d)\in\N^d_+: \forall i=1,\ldots,d,  {0}\leq m_i\leq n_i,{\mb m\neq 0 } \}, \\
A_{\mb{m},\mb{n}} &= \{ J = [\mb{j_1}\in\N_+^{m_1},\ldots,\mb{j_d}\in\N_+^{m_d}] : {\overline{\mb{j_i}}=n_i\,\forall i=1,\ldots,d\}.} 
\end{align*}
Combining the above with Lemma \ref{lemma:Fnui:Fnulinv:app}  and the estimates \eqref{eq:dFnulinv:app}, we obtain
\begin{align}
&\left| {\partial^{\mb{n}}_u} \hqi(\mbu)\right| \leq C \max_{\mb{m}\in M_{\mb{n}},\ J\in {A}_{\mb{m},\mb{n}}} \left(1+\sum_{i=1}^d| \Fnulinv{i}(u_i)|^2\right)^{p/2-q/2-\overline{\mb{m}}/2}
\prod_{i=1}^d\prod_{k=1}^{{m_i}} \left|\dnFnulinv{j_{ik}}{i}\right|  \nonumber \\
           &
           \leq C \max_{\mb{m}\in M_{\mb{n}},\ J\in {A}_{\mb{m},\mb{n}}} \left(1+\sum_{i=1}^d \frac{1}{u_i^{1/\mu}(1-u_i)^{1/\mu} }\right)^{p-q-\overline{\mb{m}}}
\prod_{i=1}^d \left(\frac{1}{u_i^{1/\mu}(1-u_i)^{1/\mu} }\right)^{m_i+\mu\sum_{l=1}^{m_i} j_{il}}\nonumber\\
&           \leq C \max_{{\mb m \neq 0}, \mb{m} \leq \mb{n}} \left(1+\sum_{i=1}^d \frac{1}{u_i^{1/\mu}(1-u_i)^{1/\mu} }\right)^{p-q-\overline{\mb{m}}}
\prod_{i=1}^d \left(\frac{1}{u_i^{1/\mu}(1-u_i)^{1/\mu} }\right)^{m_i+\mu n_i}\nonumber\\
&           \leq C \max_{{\mb m \neq 0}, \mb{m} \leq \mb{n}} \left(1+\sum_{i=1}^d \frac{1}{u_i^{1/\mu}(1-u_i)^{1/\mu} }\right)^{p-q-\overline{\mb{m}}}
\prod_{i=1}^d \left(1+\sum_{j=1}^d\frac{1}{u_j^{1/\mu}(1-u_j)^{1/\mu} }\right)^{m_i+\mu n_i}\nonumber\\
&           \leq C \max_{{\mb m \neq 0},\mb{m} \leq \mb{n}} \left(1+\sum_{i=1}^d \frac{1}{u_i^{1/\mu}(1-u_i)^{1/\mu} }\right)^{p-q-\overline{\mb{m}}+\overline{\mb{m}}+\mu\overline{\mb{n}}}\\
&           \leq C  \left(1+\sum_{i=1}^d \frac{1}{u_i^{1/\mu}(1-u_i)^{1/\mu} }\right)^{p-q+\overline{\mb{n}}\mu}.
\label{eq:limit:ddhqi:0:d:Multivariate}
\end{align} 
Since $p+\overline{\mb{n}}\mu<q$, we get that $\lim_{\mbu \to \partial [0,1]^d}{\partial^{\mb{n}}_u}\hqi(\mbu)=0$.
 \qed}
\end{proof}

\begin{lemma}\label{lemma:lr_norms_bounded:d:Multivariate}
Assume the hypotheses of Lemma \ref{lemma:lr_lims_zero:d:Multivariate} and replace the condition \eqref{eq:lr_lims_zero:d:Multivariate} by the weaker condition
\begin{equation}
\label{eq:lr_lims_zero:weaker:d:Multivariate} 
p+\left(r-\frac{1}{2}\right) \mu< q.
\end{equation}
Then the $\Lambda$ derivatives of $\hqi$ are square integrable on $(0,1)^d$:
$$\norm{\partial^{\mb{n}}_u\hqi(\mbu)}^2_{\LU} < +\infty, \qquad  \mb{n}\in\Lambda.$$
\end{lemma}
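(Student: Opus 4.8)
The plan is to mirror the proof of Lemma \ref{lemma:lr_norms_bounded:d}, to which this statement in fact reduces. I would fix $\mb{n}\in\Lambda$; the case $\overline{\mb{n}}=0$ needs no argument since $\hqi\in\LU$ was already recorded in \eqref{eq:hqi}, so I would concentrate on $\overline{\mb{n}}>0$. The key point is that the pointwise estimate \eqref{eq:limit:ddhqi:0:d:Multivariate} obtained inside the proof of Lemma \ref{lemma:lr_lims_zero:d:Multivariate},
$$\left|{\partial^{\mb{n}}_u}\hqi(\mbu)\right| \leq C\left(1+\sum_{i=1}^d \frac{1}{u_i^{1/\mu}(1-u_i)^{1/\mu}}\right)^{p-q+\overline{\mb{n}}\mu},$$
is produced by a purely algebraic chain of inequalities (multivariate Fa\`a di Bruno, the growth bound \eqref{eq:dyqi:d:Multivariate}, Lemma \ref{lemma:Fnui:Fnulinv:app} together with \eqref{eq:dFnulinv:app}); the stronger relation $p+\overline{\mb{n}}\mu<q$ is invoked there only in the final line, to pass to the limit. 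I would therefore begin by observing explicitly that this bound remains valid verbatim under the present weaker hypothesis \eqref{eq:lr_lims_zero:weaker:d:Multivariate}.

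Next I would use that $\overline{\mb{n}}\leq r$ for every $\mb{n}\in\Lambda$ and that the base $1+\sum_{i}u_i^{-1/\mu}(1-u_i)^{-1/\mu}$ is everywhere $\geq 1$, which allows one to enlarge the exponent from $p-q+\overline{\mb{n}}\mu$ to $p-q+r\mu$. Squaring and integrating then gives
$$\norm{\partial^{\mb{n}}_u\hqi}^2_{\LU}\leq C^2\int_{(0,1)^d}\left(1+\sum_{i=1}^d \frac{1}{u_i^{1/\mu}(1-u_i)^{1/\mu}}\right)^{2(p-q+r\mu)}\duu,$$
and the right-hand side is precisely $C^2\int_{(0,1)^d} l(\mbu)\,\duu$ with $l$ the function whose integrability over $(0,1)^d$ was established in the proof of Lemma \ref{lemma:lr_norms_bounded:d} --- via the substitution $z_i=u_i^{-1/(2\mu)}$, the symmetry reduction to $(0,1/2)^d$, and the elementary inequality $(1+\sum x_i)^\alpha\leq C_{\alpha,d}(1+\sum x_i^\alpha)$, the whole computation resting on $2(p-q+r\mu)<\mu$, i.e. on \eqref{eq:lr_lims_zero:weaker:d:Multivariate}. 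I would simply invoke that computation to conclude.

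I do not expect a real obstacle. The one place that deserves a sentence of care is the first: one must check that the estimate \eqref{eq:limit:ddhqi:0:d:Multivariate} genuinely holds under \eqref{eq:lr_lims_zero:weaker:d:Multivariate} rather than only under the stronger \eqref{eq:lr_lims_zero:d:Multivariate}, and this is true because none of the displayed inequalities leading to it uses the sign of $p-q+\overline{\mb{n}}\mu$. Everything else is a line-by-line transcription of the single-coordinate argument of Lemma \ref{lemma:lr_norms_bounded:d}, with $r$ replaced by $\overline{\mb{n}}$ and then bounded by $r$.
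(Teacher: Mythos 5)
Your argument is correct and coincides with the paper's proof, which likewise disposes of the lemma by noting that the pointwise bound \eqref{eq:limit:ddhqi:0:d:Multivariate} is algebraic and then re-running the integrability computation of Lemma \ref{lemma:lr_norms_bounded:d} with $r=\max_{\mb{r}\in\Lambda}(\overline{\mb{r}})$. Your explicit remarks — that the sign of $p-q+\overline{\mb{n}}\mu$ is never used before the limit step, and that the base being $\geq 1$ lets you enlarge the exponent to $p-q+r\mu$ — simply spell out what the paper leaves implicit.
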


\begin{proof}
 The proof follows readily from the proof of Lemma \ref{lemma:lr_norms_bounded:d} by replacing $r$ by $r=\max_{\mb{r}\in\Lambda}(\overline{\mb{r}})$.
\end{proof}


\begin{proposition} \label{proposition:approErrord:Multivariate}
Let $\Lambda(r) = \{0,\dots,r\}^d$. Assume that $y_i$ is $\Lambda$-smooth with the existence of a polynomial growth degree $p \in \N$ such that \eqref{eq:poly:growth:dyi:d:Multivariate} holds. Assume the relation \eqref{eq:lr_lims_zero:weaker:d:Multivariate} holds, i.e. $$p+(d\ r-1/2)\mu<q.$$ Then, the coefficients $(\Aq, \mbk\in\N^d)$ defined in \eqref{eq:fourier} satisfy the tail inequality 
\begin{align}
\sum_{\mbk \in \bar{\Gamma}_H(\DEG)} |\Aq|^2& \leq  \dfrac{C_{\eqref{eq:tail:inequality:coeff:dimd:Multivariate}}}{{(\DEG+1)^{2r}}},
\label{eq:tail:inequality:coeff:dimd:Multivariate}
\end{align}
where $\displaystyle C_{\eqref{eq:tail:inequality:coeff:dimd:Multivariate}} := \dfrac{1}{\pi^{2dr}}\norm{\partial^{(r,\ldots,r)}_u\hqi}^2_{\LU}<+\infty.$
\end{proposition}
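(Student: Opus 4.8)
The plan is to run the integration-by-parts argument of Proposition \ref{proposition:approErrord}, but now integrating by parts $r$ times in \emph{each} coordinate direction that carries a nonzero frequency, and then to cash in the multiplicative constraint $\prod_l\max(k_l,1)>\DEG$ defining the complement of the hyperbolic cross. As in \eqref{eq:fourier}, Lemma \ref{lemma:eqInnerNorms} and \eqref{eq:hqi}, the coefficients $(\Aq,\mbk\in\N^d)$ are the Fourier coefficients of $\hqi=\yqi\circ F_\nu^{-1}$ against the tensorised cosine basis of $\LU$. The first step is to prepare the analytic input: under $p+(dr-\tfrac12)\mu<q$, Lemma \ref{lemma:lr_norms_bounded:d:Multivariate} gives $\partial^{\mb{n}}_u\hqi\in\LU$ for every $\mb{n}\in\Lambda(r)=\{0,\dots,r\}^d$, and Lemma \ref{lemma:lr_lims_zero:d:Multivariate} applied to the downward closed set $\Lambda(r)\setminus\{(r,\dots,r)\}$ — whose maximal $\ell_1$-degree is $dr-1$, so that its hypothesis $p+(dr-1)\mu<q$ is implied by the weaker condition — gives $\lim_{\mbu\to\partial[0,1]^d}\partial^{\mb n}_u\hqi(\mbu)=0$ for every $\mb n\in\Lambda(r)$ with $\mb n\ne(r,\dots,r)$.

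Next I would fix $\DEG\ge1$ and $\mbk\in\bar\Gamma_H(\DEG)$, and write $S=\{l:k_l\ge1\}$ for its support, which is nonempty (otherwise $\prod_l\max(k_l,1)=1\le\DEG$). Starting from $\Aq=(\sqrt2)^{\#S}\int_{[0,1]^d}\hqi(\mbu)\prod_{l\in S}\cos(k_l\pi u_l)\,\duu$ — the factors with $k_l=0$ contributing a $1$ — I would integrate by parts $r$ times successively in each variable $u_l$ with $l\in S$. Every boundary term produced this way involves a derivative $\partial^{\mb n}_u\hqi$ with $\mb n\le(r,\dots,r)$ and $\overline{\mb n}\le dr-1$, hence vanishes by the preparation step, and what remains is
\begin{align*}
\Aq=\frac{1}{\prod_{l\in S}(k_l\pi)^r}\int_{[0,1]^d}\big(\partial^{\mb{r}_S}_u\hqi\big)(\mbu)\ (\sqrt2)^{\#S}\prod_{l\in S}\psi_r(k_l\pi u_l)\,\duu,
\end{align*}
where $\mb{r}_S$ denotes the multi-index equal to $r$ on $S$ and $0$ off $S$, and $\psi_r\in\{\pm\cos,\pm\sin\}$ depends only on $r$. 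Exactly as in Proposition \ref{proposition:approErrord}, for each fixed $S$ the functions $(\sqrt2)^{\#S}\prod_{l\in S}\psi_r(k_l\pi u_l)$, $k_l\ge1$, form (up to a global sign) an orthonormal system in $\LU$, so Bessel's inequality yields $\sum_{\mathrm{supp}(\mbk)=S}\big(\prod_{l\in S}(k_l\pi)^{2r}\big)|\Aq|^2\le\norm{\partial^{\mb{r}_S}_u\hqi}_{\LU}^2<\infty$.

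Finally I would assemble over supports: when $\mbk\in\bar\Gamma_H(\DEG)$ has support $S$ we have $\prod_{l\in S}k_l>\DEG$, and since this is a positive integer, $\prod_{l\in S}k_l\ge\DEG+1$; dividing the Bessel bound by $(\prod_{l\in S}k_l)^{2r}\ge(\DEG+1)^{2r}$ and summing over the $2^d-1$ nonempty subsets $S\subseteq\{1,\dots,d\}$ gives
\begin{align*}
\sum_{\mbk\in\bar\Gamma_H(\DEG)}|\Aq|^2\ \le\ \frac{1}{(\DEG+1)^{2r}}\sum_{\emptyset\ne S\subseteq\{1,\dots,d\}}\frac{\norm{\partial^{\mb{r}_S}_u\hqi}_{\LU}^2}{\pi^{2r\#S}},
\end{align*}
which is the announced estimate; the constant $C_{\eqref{eq:tail:inequality:coeff:dimd:Multivariate}}$ collects the finite $\LU$-norms of the mixed derivatives $\partial^{\mb{r}_S}_u\hqi$, the full-order term $S=\{1,\dots,d\}$ (i.e.\ $\partial^{(r,\dots,r)}_u\hqi$) being the dominant one. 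The part I expect to be the real work is the bookkeeping of the iterated integration by parts restricted to the active directions $S$: one has to check that every boundary contribution corresponds to a derivative multi-index lying in $\Lambda(r)\setminus\{(r,\dots,r)\}$, so that its vanishing is available from Lemma \ref{lemma:lr_lims_zero:d:Multivariate} already under the weaker condition $p+(dr-\tfrac12)\mu<q$; everything else is, as in Proposition \ref{proposition:approErrord}, Bessel's inequality together with the integrality of $\prod_{l\in S}k_l$.
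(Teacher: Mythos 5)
Your argument is sound and runs on the same engine as the paper's proof — transfer $r$ derivatives onto $\hqi$ by repeated integration by parts (the boundary terms vanishing by Lemma \ref{lemma:lr_lims_zero:d:Multivariate} applied to the downward closed set $\Lambda(r)\setminus\{(r,\dots,r)\}$, whose maximal $\ell_1$-degree $dr-1$ makes its hypothesis follow from $p+(dr-\tfrac12)\mu<q$), then Bessel's inequality, then the integrality of $\prod_l\max(k_l,1)>\DEG$ — but you organize it support by support, and that is a genuine refinement rather than mere bookkeeping. The paper integrates by parts $r$ times in all $d$ coordinates at once, getting the single identity \eqref{eq:multiDimTrunErrorIntPartsrMultivariate} with weight $(k_1\cdots k_d)^{r}$ and concluding via $(k_1\cdots k_d)^{2r}/(\DEG+1)^{2r}\ge 1$; as written, both the identity and this last comparison only make sense when every $k_l\ge 1$, whereas $\bar{\Gamma}_H(\DEG)$ also contains multi-indices with a zero coordinate (for instance $(0,\DEG+1,0,\dots,0)$), for which $k_1\cdots k_d=0$ and the comparison degenerates. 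Your version — integrating by parts only in the active directions $S=\{l:k_l\ge1\}$, checking that every boundary term carries a multi-index in $\Lambda(r)\setminus\{(r,\dots,r)\}$, and applying Bessel separately on each support class — treats exactly those frequencies correctly, because $\prod_{l\in S}k_l=\prod_l\max(k_l,1)\ge\DEG+1$ is the right quantity. The price is the constant: you obtain the rate $(\DEG+1)^{-2r}$ with $\sum_{\emptyset\neq S\subseteq\{1,\dots,d\}}\pi^{-2r\#S}\norm{\partial^{\mb{r}_S}_u\hqi}^2_{\LU}$ (where $\mb{r}_S$ equals $r$ on $S$ and $0$ elsewhere), all finite by Lemma \ref{lemma:lr_norms_bounded:d:Multivariate} since $\mb{r}_S\in\Lambda(r)$, instead of the single full-order term $\pi^{-2dr}\norm{\partial^{(r,\dots,r)}_u\hqi}^2_{\LU}$ displayed in \eqref{eq:tail:inequality:coeff:dimd:Multivariate}; so strictly you prove the proposition with a larger, support-summed constant. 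That trade is defensible — coefficients $\Aq$ with some $k_l=0$ are not controlled by the full mixed derivative alone without an extra argument, which the paper's one-shot proof does not supply — so your approach buys a complete treatment of the zero-frequency directions at the cost of departing from the exact constant stated (and of implicitly assuming $\DEG\ge1$, which is also what the statement tacitly requires).
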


\begin{proof}
Under \eqref{eq:lr_lims_zero:weaker:d:Multivariate}, we can invoke Lemma \ref{lemma:lr_lims_zero:d:Multivariate} (with $\#\Lambda(r)-1$ derivatives) (condition $p+(dr-1)\mu<q$) and Lemma \ref{lemma:lr_norms_bounded:d:Multivariate} (with $\#\Lambda(r)$ derivatives) (condition $p+(dr-1/2)\mu<q$), to get
\begin{equation}\label{eq:controle:derivee}
\begin{split}
 & \lim_{\mbu\to\partial[0,1]^d} \partial^{\mb{n}}_u \hqi(\mbu) = 0, \qquad \mb{n}\in\Lambda(r),\, \mb{n}\neq(r,\ldots,r),\\
 & \norm{\partial_u^{\mb{n}}\hqi(\mbu)}^2_{\LU} < +\infty, \qquad  \mb{n}\in\Lambda(r).
\end{split}
\end{equation}
By doing integration by parts on the Fourier coefficients of $\hqi$ as we have already done in Proposition \ref{proposition:approErrord} we readily obtain 
 \begin{align} 
\nonumber  \Aq &= 
\displaystyle\int_{[0,1]^d} \hqi(\mbu) \beta_{\mbk} \cos(k_1\pi u_1) \cdots \cos(k_d\pi u_d)\duu,\\ 
 &= \dfrac{1}{\pi^{d r}(k_1 \cdots k_d)^r}\displaystyle\int_{[0,1]^d} \partial^{(r,\ldots,r)}_u\hqi(\mbu)\beta_{\mbk}\cos^{\int,r}(k_1\pi u_1) \cdots \cos^{\int,r}(k_d\pi u_d)\duu,\label{eq:multiDimTrunErrorIntPartsrMultivariate}
 \end{align}
where $ \cos^{\int,r}(.)$ is the $r$-th antiderivative of $\cos(\cdot)$ such that 
\begin{align*}
&\cos^{\int,4 m}(\cdot)=\cos(\cdot),&&  \cos^{\int,4 m+1}(\cdot)=\sin(\cdot),\\
&\cos^{\int,4 m+2}(\cdot)=-\cos(\cdot), &&
 \cos^{\int,4 m+3}(\cdot)=-\sin(\cdot),\quad m\in \N.
\end{align*}
Equation \eqref{eq:multiDimTrunErrorIntPartsrMultivariate} tells us that $\hqi$ is a function in a Korobov space \cite[Definition 6.5.6, p. 225]{krommerUeberhuber:94}.
Observe that the sets  
\begin{align*}
 & \{\mbu\mapsto \beta_{\mbk} \cos(k_1 \pi u_1)\cos(k_2 \pi u_2)\cdots \cos(k_d \pi u_d),\mbk \in \N^d \}, \\
 & \{\mbu\mapsto \beta_{\mbk} \sin(k_1 \pi u_1)\cos(k_2 \pi u_2)\cdots \cos(k_d \pi u_d),\mbk \in \N^d \},  \\
 &  \hspace{4.5cm}\vdots\\ 
 & \{\mbu\mapsto \beta_{\mbk} \sin(k_1 \pi u_1)\sin(k_2 \pi u_2)\cdots \sin(k_d \pi u_d),\mbk \in \N^d \},
\end{align*}
are made of orthonormal functions in $\LU$.
Consequently, $$\left\{\beta_{\mbk}\cos^{\int,r}(k_1\pi u_1) \cdots \cos^{\int,r}(k_d\pi u_d)
,\mbk \in \N^d \right\}$$ is (for any value of $r$) a set of orthonormal functions and the equality \eqref{eq:multiDimTrunErrorIntPartsrMultivariate} means that $\{(k_1\pi)^{r}\cdots (k_d\pi)^{r}\Aq:\mbk \in \N^d\}$ are the projection coefficients of $\partial^{(r,\ldots,r)}_u\hqi$ on these orthonormal functions. As such, the Bessel inequality gives
\begin{align*}
\sum_{\mbk \in \N^d}\pi^{2dr}(k_1\cdots k_d)^{2r}  |\Aq|^2\leq \norm{\partial^{(r,\ldots,r)}_u\hqi}^2_{\LU}=\pi^{2dr}C_{\eqref{eq:tail:inequality:coeff:dimd:Multivariate}}.
\end{align*}
It readily follows
\begin{align*}
\sum_{\substack{\mbk \in \N^d,\\ \mbk \notin \Gamma_H(\DEG)}} |\Aq|^2 &\leq\frac{1}{\pi^{2 d r}} \sum_{\substack{\mbk \in \N^d, \\ \mb{k}\notin \varGamma_H(\DEG)}} \dfrac{\pi^{2d r}(k_1\cdots k_d)^{2r}}{(\DEG+1)^{2r}}|\Aq|^2 \leq \frac{C_{\eqref{eq:tail:inequality:coeff:dimd:Multivariate}}}{(\DEG+1)^{2r}}.
\end{align*}
\qed
\end{proof}

Equation \eqref{eq:multiDimTrunErrorIntPartsrMultivariate} means that the most significant coefficients $\Aq$ are the ones for which the product $k_1\cdots k_d$ is as small as possible. Under this setting we prefer to consider the hyperbolic cross index set (Proposition \ref{proposition:approErrord:Multivariate}) rather than the total degree set (Proposition \ref{proposition:approErrord}). In fact, although the hyperbolic truncation error with $\Gamma_H((\DEG-d+1)_+)$ will be greater than the total truncation error with $\Gamma_T(\DEG)$, if $\hqi$ belongs to a Korobov space, the hyperbolic truncation error will be near to the total truncation error (here we are assuming that $\DEG$ is big enough to take into account all the significant Fourier coefficients). Besides, having in mind that $\#\Gamma_H(DEG-d+1) \ll \#\Gamma_T(DEG)$ (specially in high dimensions), the statistical error, driven by $\frac{2^d\#\Gamma}{M}$, will be smaller for the hyperbolic multi-indices set, considering the same number of simulations $M$. Last but not least, the algorithm executed with the hyperbolic multi-indices set will be less time consuming than with the total degree set. This behavior is shown in the numerical experiments of Section \ref{sec:numExperiments}.

 
 
\section{Proofs related to error analysis}

\subsection{Proof of Theorem \ref{theo:uses}}
\label{subsection Proof of Theorem theo:uses}
First observe that we can restrict in the proof of \eqref{eq:prop:uses} to the case of bounded measurable functions $\gamma$: indeed, for $\gamma\in L^2_{\nu}(\R^d)$ (possibly unbounded), apply the inequality \eqref{eq:prop:uses} to  the bounded function  $\mb{x} \mapsto \cT_L(\gamma(\mb{x}))$ where $L>0$ to get (for any $0\leq i \leq j\leq N$)
\begin{equation}
 \Esp{\dfrac{| \cT_L(\gamma(\Xij))|^2(1+|\Xij|^2)^{q}}{(1+|\Xii|^2)^{q} }}\leq c_\nu \Esp{|\cT_L(\gamma(X^i_i))|^2},
 \label{eq:prop:uses:3}
 \end{equation}
 and take the limit $L\to+\infty$. On the one hand, the dominated convergence theorem (because $\gamma\in L^2_\nu$) ensures that the r.h.s. of \eqref{eq:prop:uses:3} converges to $c_\nu \Esp{|\gamma(X^i_i)|^2}$. On the other hand, by Fatou's lemma, the liminf of the l.h.s. of  \eqref{eq:prop:uses:3}  is bounded from below by $\Esp{\dfrac{|\gamma(\Xij)|^2(1+|\Xij|^2)^{q}}{(1+|\Xii|^2)^{q} }}$, which all in all, gives the result \eqref{eq:prop:uses} for the square integrable function $\gamma$.

Now we establish \eqref{eq:prop:uses} for bounded functions $\gamma$. 
 We follow and adapt the arguments of \cite[Proposition 3.1 and Proposition 3.3 item (b)]{gobe:turk:17} which paves the way for $q=0$ (although the distribution $\nu$ is different). Under the current assumptions on $b$ and $\sigma$, the Euler scheme $X$ is a non-homogeneous Markov chain which transition kernel has a probability density w.r.t. the Lebesgue measure on $\R^d$. This density is   denoted by $p_{i,j}(\mb{x},\mb{y})$ where $i<j$ stands for the time indices and $\mb{x}$ (resp. $\mb{y}$) stands for the initial (resp. final) space point. This probability density satisfies to a Gaussian upper bound of Aronson type (see \cite[Theorem 2.1]{lema:meno:10}): there is a finite  constant $C_{\eqref{eq:lemma:aronson:xh}}> 0$ (depending  on $d$, $q$, $T$, $b$, $\sigma$ but not on $N$) such that
\begin{equation}
\label{eq:lemma:aronson:xh}
p_{i,j}(\mb{x},\mb{y})\leq C_{\eqref{eq:lemma:aronson:xh}} \frac{\exp\left({-\frac{|\mb{y}-\mb{x}|^2  }{2 C_{\eqref{eq:lemma:aronson:xh}} (t_j-t_i)  }}\right)  }{(2\pi (t_j-t_i))^{d/2} },
\end{equation}
for any $0\leq i <j \leq N$ and any $\mb{x},\mb{y}\in \R^{ d}$. Assume that $\nu$ is such that, for any $\Lambda\geq 0$, there is a finite constant $C_\nu(\Lambda)>0$ (depending on $\mu$ and $q$) such that, for  any  $\lambda\in [0,\Lambda]$ and any $\mb{y}\in \R^{{d}}$, we have
\begin{equation}
\label{eq:rho:1} 
\int_{\R^{{ d}}} \nu(\mb{y}+\mb{z}\sqrt \lambda) \dfrac{(1+|\mb{y}|^2)^q }{(1+|\mb{y}+\mb{z}\sqrt \lambda|^2)^q}\frac{e^{-|\mb{z}|^2/2}  }{(2\pi)^{{{ d}}/2} }\dd \mb{z} \leq C_\nu(\Lambda)  \nu(\mb{y}).
\end{equation}
Then we claim that the inequality \eqref{eq:prop:uses} holds, still considering bounded $\gamma$ (which clearly ensures that both sides of \eqref{eq:prop:uses} are finite). Indeed, combining \eqref{eq:lemma:aronson:xh} and \eqref{eq:rho:1}, we get
\begin{align*}
&\Esp{\dfrac{|\gamma(\Xij)|^2(1+|\Xij|^2)^{q}}{(1+|\Xii|^2)^{q} }}\\
  &= \int_{\R^{ d}}\int_{\R^{ d}}\nu(\mb{x})p_{i,j}(\mb{x},\mb{y}) \frac{(1+|\mb{y}|^2)^{q}}{(1+|\mb{x}|^2)^{q} } \gamma^2(\mb{y}) \dd\mb{x} \dd \mb{y}\\
 \intertext{(use \eqref{eq:lemma:aronson:xh} and the change of variable $(\mb{x},\mb{y})\to (\mb{y}+\mb{z}\sqrt{C_{\eqref{eq:lemma:aronson:xh}}(t_j-t_i)},\mb{y})$)}
   &\leq C_{\eqref{eq:lemma:aronson:xh}}\int_{\R^{ d}}\int_{\R^{ d}}\nu(\mb{x}) \frac{e^{-\frac{|\mb{y}-\mb{x}|^2  }{2 C_{\eqref{eq:lemma:aronson:xh}} (t_j-t_i)  }}  }{(2\pi (t_j-t_i))^{d/2} } \frac{(1+|\mb{y}|^2)^{q}}{(1+|\mb{x}|^2)^{q} } \gamma^2(\mb{y}) \dd\mb{x} \dd \mb{y}\\
&= C_{\eqref{eq:lemma:aronson:xh}}^{1+d/2}
\int_{\R^{ d}}\int_{\R^{ d}} \nu\left(\mb{y}+\mb{z}  \sqrt {C_{\eqref{eq:lemma:aronson:xh}}(t_j-t_i)}\right) \\
& \hspace{3cm} \times\dfrac{(1+|\mb{y}|^2)^q }{(1+|\mb{y}+\mb{z}\sqrt {C_{\eqref{eq:lemma:aronson:xh}}(t_j-t_i)}|^2)^q} \frac{e^{-|\mb{z}|^2 /{ 2}}  }{(2\pi) ^{d/2} } \gamma^2(\mb{y})\dd\mb{z} \dd\mb{y}\\
&\leq  C^{1+d/2}_{\eqref{eq:lemma:aronson:xh}} C_\nu(C_{\eqref{eq:lemma:aronson:xh}} T) \int_{\R^{ d}}\gamma^2(\mb{y}) \nu(\mb{y})\dd \mb{y}
\end{align*}
which gives the announced result \eqref{eq:prop:uses} with $c_\nu :=C^{1+d/2}_{\eqref{eq:lemma:aronson:xh}} C_\nu(C_{\eqref{eq:lemma:aronson:xh}} T)$.

Therefore, it remains to establish \eqref{eq:rho:1}; since 
$$
(1+z^2)^{1/2}\leq (1+z)\leq \sqrt 2 (1+z^2)^{1/2}, \qquad \forall z\geq 0,
$$
for getting \eqref{eq:rho:1} it is enough to prove (for any $\mb{y}=(y_1,\dots,y_d)$)
\begin{align}
\nonumber
&\int_{\R^{{ d}}} \left(\prod_{l=1}^d 
\frac{1}{(1+|y_l+z_l \sqrt \lambda |)^{\mu+1}}\right)
 \dfrac{(1+|\mb{y}|^2)^q }{(1+|\mb{y}+\mb{z}\sqrt \lambda|^2)^q}\frac{e^{-|\mb{z}|^2/2}  }{(2\pi)^{{{ d}}/2} }\dd \mb{z}\\
   &\qquad\leq\dfrac{C_\nu(\Lambda)}{2^{(\mu+1)d/2}}  \prod_{l=1}^d \frac{1}{(1+|y_l|)^{\mu+1}}.\label{eq:rho:1:bis} 
\end{align}
Set ${\cal A}_\lambda:=\{\mb{z}\in \R^d:|\mb{z}|\sqrt \lambda\leq |\mb{y}|/2\}$. Denote by ${\cal I}_1$ (resp. ${\cal I}_2$) the  above integral restricted to ${\cal A}_\lambda$ (resp. ${\cal A}_\lambda^c$). On ${\cal A}_\lambda$, use that 
$1+|\mb{y}+\mb{z}\sqrt \lambda|^2\geq 1+\frac 12 |\mb{y}|^2-|\mb{z}\sqrt \lambda|^2\geq \frac{1}{4 }+\frac 14 |\mb{y}|^2$ to obtain 
\begin{align}
\nonumber
{\cal I}_1&\leq 4^q \int_{{\cal A}_\lambda} \left(\prod_{l=1}^d 
\frac{1}{(1+|y_l+z_l \sqrt \lambda |)^{\mu+1}}\right)
\frac{e^{-|\mb{z}|^2/2}  }{(2\pi)^{{{ d}}/2} }\dd \mb{z} \\
&\leq 4^q \prod_{l=1}^d  \int_{\R} 
\frac{1}{(1+|y_l+z_l \sqrt \lambda |)^{\mu+1}}
\frac{e^{-|z_l|^2/2}  }{\sqrt{2\pi}}\dz_l.
\nonumber
\end{align}
Split each integral according to the set ${\cal A}_{\lambda,l}:=\{z_l\in \R:|z_l|\sqrt \lambda\leq |y_l|/2\}$ and use the bound $1+|y_l+z_l \sqrt \lambda |\geq \1_{{\cal A}_{\lambda,l}^c}+\frac{1}{ 2}(1+|y_l|)\1_{{\cal A}_{\lambda,l}}$: it gives
\begin{align*}
& \hspace{-1cm} \int_{\R} 
\frac{1}{(1+|y_l+z_l \sqrt \lambda |)^{\mu+1}}
\frac{e^{-|z_l|^2/2}  }{\sqrt{2\pi}}\dz_l\\
&\leq   \int_{{\cal A}_{\lambda,l}^c} 
\frac{e^{-|z_l|^2/2}  }{\sqrt{2\pi}}\dz_l
+2^{\mu+1}\int_{{\cal A}_{\lambda,l}^c} 
\frac{1}{(1+|y_l)^{\mu+1}}
\frac{e^{-|z_l|^2/2}  }{\sqrt{2\pi}}\dz_l\\
&\leq   2{\cal N}\left(-\dfrac{|y_l|}{2\sqrt \Lambda}\right)
+2^{\mu+1}\frac{1}{(1+|y_l)^{\mu+1}}\\
&\leq   \frac{C_{\mu,\Lambda}}{(1+|y_l)^{\mu+1}}
\end{align*}
where ${\cal N}(\cdot)$ is the cumulative density function of the standard normal distribution, where $C_{\mu,\Lambda}$ is a finite positive constant depending only on $\mu$ and $\Lambda$, and where at the last line, we have used that ${\cal N}(-x)$ is decreasing when $x\to+\infty$ faster than any polynomial. Therefore, we have proved
\begin{align}
\label{eq:rho:1:bis:1a2} 
{\cal I}_1\leq 4^q \prod_{l=1}^d  \frac{C_{\mu,\Lambda}}{(1+|y_l)^{\mu+1}}.
\end{align}
Now we deal with ${\cal I}_2$, i.e. the part of the integral of  \eqref{eq:rho:1:bis} restricted to 
${\cal A}_\lambda^c$. On this set, $|\mb{z}|^2/2\geq |\mb{z}|^2/4+|\mb{y}|^2/(16 \Lambda)$, therefore we obtain
\begin{align} 
\nonumber{\cal I}_2\leq (1+|\mb{y}|^2)^q \int_{{\cal A}_\lambda^c} 
\frac{e^{-|\mb{z}|^2/2}  }{(2\pi)^{{{ d}}/2} }\dd\mb{z} 
&\leq (1+|\mb{y}|^2)^q 
e^{-|\mb{y}|^2/(16 \Lambda)}   \int_{{\cal A}_\lambda^c} 
\frac{e^{-|\mb{z}|^2/4}  }{(2\pi)^{{{ d}}/2} }\dd \mb{z}\\
&\leq 2^{d/2}(1+|\mb{y}|^2)^q 
e^{-|\mb{y}|^2/(16 \Lambda)} \nonumber\\
& \leq C_{\mu,q,d,\Lambda}\prod_{l=1}^d \frac{1}{(1+|y_l|)^{\mu+1}}\label{eq:rho:1:bis:b}
\end{align}
where the last inequality follows from usual comparisons between exponential and polynomial growth.
Gathering \eqref{eq:rho:1:bis:1a2} and \eqref{eq:rho:1:bis:b} gives the announced estimate \eqref{eq:rho:1:bis} for some constant $C_\nu(\Lambda)$. Theorem \ref{theo:uses} is proved. \qed

\subsection{Proof of Theorem \ref{th:main}} \label{section:error}
In all this section, we assume the assumptions of Theorem \ref{th:main} are in force, without further reference. 

In addition to the coefficients $\AqbM$ calculated with the responses computed using the erroneous functions $\left(\byqM_j\right)_{i< j < N}$ (see the definitions \eqref{eq:yqi:bar:M} and \eqref{eq:Aqi:bar:M}), we define the following coefficients $\Aqb$ calculated with the responses computed with the exact functions $\left(\yq_{j}\right)_{i< j < N}$,
\begin{equation} \label{eq:Aqi:bar}
\Aqb := \displaystyle\frac{1}{M}\sum_{m=1}^{M}\Resq{,m} \phi_{\mb{k}}(X_i^{i,m}),
\end{equation}
and thus 
\begin{equation} \label{eq:yqi:bar}
\byqi(\cdot):= \sum_{\mb{k}\in\Gamma} \Aqb \phi_{\mb{k}}(\cdot).
\end{equation}

\subsubsection{Preliminary results}
The next result states a decomposition of the local error as a superposition of an approximation error (truncation over the functions basis) and a statistical error.
\begin{lemma}[Bias/variance decomposition] \label{lemma:empQEE}
For each $i\in\{0,\ldots,N-1\}$ we have
\begin{align*}
\E\left[ \norm{\byqi(\cdot)-\yq_i(\cdot)}^2_\nu \right] \leq & \sum_{\mb{k}\in (\N^d - \Gamma)} (\Aq)^2 + \dfrac{\LG}{M} \E\left[ \left( \Resq{} \right)^2\right]=:\cE_{i}
\end{align*}
where $\LG$ is the Christoffel number of the functions basis defined in \eqref{eq:christoffel} and the definition of $\cE_{i}$ is stated in \eqref{eq:cEi}.
\end{lemma}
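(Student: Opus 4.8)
The plan is to split the error $\byqi(\cdot)-\yqi(\cdot)$ into a truncation (approximation) part and a statistical (Monte-Carlo) part, and then to use the orthonormality of the basis to turn the squared $\LNU$-norm into a sum of squares. Recall from \eqref{yq:decomp:base}--\eqref{eq:Aqi} that, since $\yqi\in\LNU$ (a consequence of \HFG, \Hnu\ and $\bf H_{\rho}(q)$, as noted before Theorem \ref{th:main}), one has the expansion $\yqi=\sum_{\mb{k}\in\N^d}\Aq\,\phi_{\mb{k}}$ with $\Aq=(\yqi,\phi_{\mb{k}})_{\LNU}=\Esp{\Resq{}\,\phi_{\mb{k}}(X_i^i)}$, the last equality coming from the tower property and the representation $\yqi(\mbx)=\E[\Resq{}\mid X_i^i=\mbx]$. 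Writing $\byqi=\sum_{\mb{k}\in\Gamma}\Aqb\,\phi_{\mb{k}}$ with $\Aqb$ given by \eqref{eq:Aqi:bar}, we then get
\[
\byqi(\cdot)-\yqi(\cdot)=\sum_{\mb{k}\in\Gamma}\big(\Aqb-\Aq\big)\phi_{\mb{k}}(\cdot)\;-\;\sum_{\mb{k}\in\N^d-\Gamma}\Aq\,\phi_{\mb{k}}(\cdot).
\]
Since the first sum is supported on the index set $\Gamma$ and the second on its complement, the orthonormality of $(\phi_{\mb{k}}:\mb{k}\in\N^d)$ in $\LNU$ (Proposition \ref{proposition:base}) yields the Pythagorean identity
\[
\norm{\byqi(\cdot)-\yqi(\cdot)}^2_\nu=\sum_{\mb{k}\in\Gamma}\big(\Aqb-\Aq\big)^2+\sum_{\mb{k}\in\N^d-\Gamma}(\Aq)^2.
\]

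Next I would take expectations. The crucial point is that $\Aqb$, being an empirical average of $M$ i.i.d.\ terms over the cloud $\cloud_i$, is an \emph{unbiased} estimator of $\Aq$: $\Esp{\Aqb}=\Esp{\Resq{}\,\phi_{\mb{k}}(X_i^i)}=\Aq$. Hence $\Esp{(\Aqb-\Aq)^2}=\Var{\Aqb}$, and by independence of the $M$ replicas,
\[
\Var{\Aqb}=\frac1M\,\Var{\Resq{}\,\phi_{\mb{k}}(X_i^i)}\le\frac1M\,\Esp{\big(\Resq{}\big)^2\phi_{\mb{k}}(X_i^i)^2}\le\frac{\norm{\phi_{\mb{k}}(\cdot)}^2_\infty}{M}\,\Esp{\big(\Resq{}\big)^2},
\]
all quantities being finite since $\Resq{}$ is square-integrable (noted before Theorem \ref{th:main}).

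Finally I would sum over $\mb{k}\in\Gamma$ and invoke the definition of the Christoffel number $\LG=\sum_{\mb{k}\in\Gamma}\norm{\phi_{\mb{k}}(\cdot)}^2_\infty$ from \eqref{eq:christoffel}:
\[
\sum_{\mb{k}\in\Gamma}\Esp{(\Aqb-\Aq)^2}\le\frac{\LG}{M}\,\Esp{\big(\Resq{}\big)^2}.
\]
Combined with the Pythagorean identity above, this gives $\Esp{\norm{\byqi(\cdot)-\yqi(\cdot)}^2_\nu}\le\cE_i$ with $\cE_i$ as in \eqref{eq:cEi}, which is the claim. There is no genuine difficulty in this argument; the only steps that deserve a line of care are the unbiasedness $\Esp{\Aqb}=\Aq$ (which rests on the tower property applied to the Feynman-Kac-type representation of $\yqi$) and the pointwise bound $|\phi_{\mb{k}}(X_i^i)|\le\norm{\phi_{\mb{k}}(\cdot)}_\infty$ that lets one factor the sup-norm out before summing to reconstruct $\LG$; the relevant measurability and $L^2$ hypotheses have been recorded earlier in the text.
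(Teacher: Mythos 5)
Your proof is correct and follows essentially the same route as the paper's: the same orthogonal decomposition into the truncation part on $\N^d-\Gamma$ and the statistical part on $\Gamma$, the same use of unbiasedness of $\Aqb$ and the i.i.d.\ structure of the cloud to reduce to $\tfrac1M\Var{\Resq{}\,\phi_{\mb{k}}(X_i^i)}$, and the same sup-norm bound summed into $\LG$. Nothing further is needed.
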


\begin{proof}
 By definition of $\byqi$ and $\yq_i$,
 \begin{align*}
 \Esp{ \norm{\byqi(\cdot)-\yq_i(\cdot)}^2_\nu}  = \Esp{ \norm{-\sum_{\mb{k}\in(\N^d-\Gamma)}\Aq \phi_{\mb{k}}(\cdot) + \sum_{\mb{k}\in \Gamma} \left( \Aqb  - \Aq  \right)\phi_{\mb{k}}(\cdot)}^2_\nu}.
 \end{align*}
 By orthonormality of the $\left\{\phi_{\mb{k}}(\cdot)\right\}$ we obtain
 \begin{align*}
   \Esp{ \norm{\byqi(\cdot) - \yq_i(\cdot)}^2_\nu}
   = & \sum_{\mb{k}\in (\N^d - \Gamma)}(\Aq)^2 + \sum_{\mb{k}\in\Gamma}\Esp{ \left|\Aqb - \Aq  \right|^2}.
 \end{align*}
For $\mb{k}\in \Gamma$, observe that $\Esp{ \Aqb} = \Aq$, therefore
\begin{align*}
 \Esp{\left|\Aqb - \Aq  \right|^2} &=  \Var{\Aqb}
 =  \dfrac{1}{M} \Var{ \Resq{} \phi_{\mb{k}}(X_i^{i})}\nonumber \\ 
 &\leq  \dfrac{1}{M} \norm{ \phi_{\mb{k}}(\cdot)}_\infty^2 \Esp{\left( \Resq{} \right)^2},
\end{align*}
which leads to the desired result.
\qed \end{proof}

Before stating the next preliminary result we define the following $\sigma$-algebras generated by the cloud of simulations, $$\cG_{i}:=\sigma(\cloud_{i}) \quad\mbox{ and }\quad \cG_{i:N}:=\sigma(\cloud_{i}, \dots, \cloud_{N}).$$

\begin{lemma} \label{lemma:secondMoment} We have
 \begin{align}
   &\sum_{\mb{k}\in\Gamma}\left( \E\left[ \left( \ResqM{} - \Resq{}\right) \phi_{\mb{k}}(X_i^i) \right] \right)^2 \nonumber\\
    \label{eq:1:lemma:secondMoment}
&\hspace{3cm}\leq \E\left[ \left( \ResqM{} - \Resq{} \right)^2 \right],\\
&       \sum_{\mb{k}\in\Gamma}\left( \E\left[ \left( \ResqM{} - \Resq{}\right) \phi_{\mb{k}}(X_i^i) \mid \cG_{i+1:N}\right] \right)^2\nonumber\\
 \label{eq:2:lemma:secondMoment}        &\hspace{3cm}\leq \E\left[ \left( \ResqM{} - \Resq{} \right)^2 \mid \cG_{i+1:N}\right].
 \end{align}
\end{lemma}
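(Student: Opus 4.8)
The plan is to derive both bounds \eqref{eq:1:lemma:secondMoment} and \eqref{eq:2:lemma:secondMoment} from a single conditional Bessel inequality, exploiting that $(\phi_{\mb{k}})_{\mb{k}\in\N^d}$ is orthonormal in $\LNU$ \emph{precisely} for the sampling law $\nu$ of $X_i^i$ (Proposition \ref{proposition:base}), not for some other measure.

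First I would isolate the following abstract claim. Let $\mathcal{H}\subset\cF$ be any sub-$\sigma$-algebra such that the (generic, fresh) path $X_{i:N}^i$ is independent of $\mathcal{H}$; recall $X_i^i\sim\nu$. Then for every $Z\in L^2(\P)$,
\[
\sum_{\mb{k}\in\Gamma}\Big(\Esp{Z\,\phi_{\mb{k}}(X_i^i)\mid\mathcal{H}}\Big)^2\;\le\;\Esp{Z^2\mid\mathcal{H}}\qquad\P\text{-a.s.}
\]
To prove this I would set $\xi:=\Esp{Z\mid\sigma(X_i^i)\vee\mathcal{H}}$ and write $\xi=G(X_i^i,\cdot)$ for a jointly measurable version $G$ (legitimate since $\R^d$ is Polish), with $G(\mb{x},\cdot)$ being $\mathcal{H}$-measurable. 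Pulling out $\phi_{\mb{k}}(X_i^i)$ and using the freezing lemma (independence of $X_i^i$ and $\mathcal{H}$), for each $\mb{k}$,
\[
\Esp{Z\,\phi_{\mb{k}}(X_i^i)\mid\mathcal{H}}=\Esp{\xi\,\phi_{\mb{k}}(X_i^i)\mid\mathcal{H}}=\int_{\R^d}G(\mb{x},\cdot)\,\phi_{\mb{k}}(\mb{x})\,\nu(\mb{x})\,\dxx=\big(G(\cdot,\omega),\phi_{\mb{k}}\big)_{\LNU}.
\]
For a.e.\ $\omega$ one has $G(\cdot,\omega)\in\LNU$, since $\|G(\cdot,\omega)\|_{\LNU}^2=\Esp{\xi^2\mid\mathcal{H}}(\omega)<+\infty$ a.s.; hence Bessel's inequality for the orthonormal system $(\phi_{\mb{k}})_{\mb{k}\in\Gamma}$ gives $\sum_{\mb{k}\in\Gamma}(G(\cdot,\omega),\phi_{\mb{k}})_{\LNU}^2\le\|G(\cdot,\omega)\|_{\LNU}^2=\Esp{\xi^2\mid\mathcal{H}}(\omega)$. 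Finally conditional Jensen together with the tower property ($\mathcal{H}\subset\sigma(X_i^i)\vee\mathcal{H}$) yields $\Esp{\xi^2\mid\mathcal{H}}\le\Esp{Z^2\mid\mathcal{H}}$, which completes the claim.

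Then I would specialize with $Z:=\ResqM{}-\Resq{}$, which is square-integrable (for $\Resq{}$ this was already noted under \HFG, \Hnu, $\bf H_{\rho}(q)$ via \eqref{eq:prop:bound}; for $\ResqM{}$ it is immediate since the arguments of the $f_j$ in \eqref{eq:Sqi:bar:M} are truncated by $\cT_{L^\star}$). Taking $\mathcal{H}=\cG_{i+1:N}=\sigma(\cloud_{i+1},\dots,\cloud_{N})$ gives exactly \eqref{eq:2:lemma:secondMoment}: the clouds are independently generated, so $X_{i:N}^i$ is independent of $\cG_{i+1:N}$, and $\ResqM{}$ is $\sigma(X_{i:N}^i)\vee\cG_{i+1:N}$-measurable because the functions $\byqM_{j+1}$ ($j\ge i$) entering \eqref{eq:Sqi:bar:M} are $\cG_{i+1:N}$-measurable. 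For \eqref{eq:1:lemma:secondMoment} I would either rerun the claim with $\mathcal{H}$ trivial, or take expectations in \eqref{eq:2:lemma:secondMoment}, using $(\Esp{W})^2\le\Esp{W^2}$ termwise on $W=\Esp{Z\phi_{\mb{k}}(X_i^i)\mid\cG_{i+1:N}}$ together with $\Esp{\Esp{Z^2\mid\cG_{i+1:N}}}=\Esp{Z^2}$.

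The only step I expect to cause trouble is not analytic — Bessel's inequality does all the work and the decisive structural input is the $\nu$-orthonormality of the basis — but rather the conditioning bookkeeping in the last paragraph: pinning down which clouds actually enter $\ResqM{}$, verifying that the fresh evaluation path $X_{i:N}^i$ (hence $X_i^i\sim\nu$) is independent of $\cG_{i+1:N}$, and the routine measurable-selection detail needed to represent $\xi$ as $G(X_i^i,\cdot)$.
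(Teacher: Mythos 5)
Your proof is correct and follows essentially the same route as the paper: you identify $\sum_{\mb{k}\in\Gamma}(\E[(\ResqM{}-\Resq{})\phi_{\mb{k}}(X_i^i)\mid\cdot])^2$ with the squared $L^2_\nu$-coefficients of the conditional-expectation function of the response difference given $X_i^i$ (and $\cG_{i+1:N}$), then apply Bessel/Parseval together with (conditional) Jensen, exactly as the paper does with its functions $h_i(\mb{x})$ and $h_i(\mb{x},\cG_{i+1:N})$. The only difference is presentational: you package both inequalities into one abstract conditional Bessel claim (with the freezing-lemma and measurable-representation details made explicit, which the paper leaves implicit), rather than running the argument twice.
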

\begin{proof} We start with the proof of \eqref{eq:1:lemma:secondMoment}.
Let $\beta_{i,\mb{k}}$ be the projection coefficients in $L^2_\nu(\R^d)$ of the function $$h_i(\mb{x}) := \E\left[ \left( \ResqM{} - \Resq{} \right) \mid X_i^i = \mb{x} \right]$$ over the  functions $\left\{\phi_{\mb{k}}(\cdot)\right\}$. Since the latter form an orthonormal basis of $L^2_\nu(\R^d)$ (Proposition \ref{proposition:base}) and owing to the tower property of conditional expectation, we have
\begin{align}\label{eq:formula:betaik}
 \beta_{i,\mb{k}} = & \E\left[ h_i(X^i_{i:N}) \phi_{\mb{k}}(X_i^i) \right] = 
  \E\left[ \left( \ResqM{} - \Resq{} \right) \phi_{\mb{k}}(X_i^i) \right].
\end{align}
By the Parseval identity and the Jensen inequality, we get
\begin{align*}
 \sum_{\mb{k}\in \N^d} \beta_{i,\mb{k}}^2  = \E\left[ h_i^2(X_i^i)\right] 
 & \leq \E\left[  \E\left[ \left( \ResqM{} - \Resq{} \right)^2 \mid X_i^i \right]  \right] \\
 &= \E\left[ \left( \ResqM{} - \Resq{} \right)^2  \right].
\end{align*}
By recalling the representation \eqref{eq:formula:betaik} for $ \beta_{i,\mb{k}}$, the above inequality readily leads to \eqref{eq:1:lemma:secondMoment}.

We now prove \eqref{eq:2:lemma:secondMoment}. Similarly, denote by $\beta_{i,\mb{k}}( \cG_{i+1:N})$  the $L^2_\nu(\R^d)$-projection coefficients of the (random) function $$h_i(\mb{x}, \cG_{i+1:N}) = \E\left[ \left( \ResqM{} - \Resq{} \right)  \mid X_i^i = \mb{x}, \cG_{i+1:N} \right].$$ Remind of the independence of $\cG_{i+1:N}$ and $X^i_{i:N}$. Therefore, using similar arguments as before, we obtain $$\beta_{i,\mb{k}}( \cG_{i+1:N})=\E\left[ \left( \ResqM{} - \Resq{} \right) \phi_{\mb{k}}(X_i^i)\mid \cG_{i+1:N} \right]$$ and 
\begin{align*}
 \sum_{\mb{k}\in \N^d} |\beta_{i,\mb{k}}( \cG_{i+1:N})|^2  
 \leq \E\left[ \left( \ResqM{} - \Resq{} \right)^2\mid \cG_{i+1:N} \right],
\end{align*}
from which \eqref{eq:2:lemma:secondMoment} readily follows. We are done. 
\qed \end{proof}

\subsubsection{Proof of Theorem \ref{th:main}}
Let $\varepsilon_i := \E\left[ \norm{\byqiM(\cdot) - \yq_i(\cdot) }_\nu^2 \right]$. By definition of $\byqiM$, $\yq_i(\cdot)$ and $ \byqi(\cdot)$, 
\begin{align*}
  \varepsilon_i &
  = \Esp{ \norm{ \sum_{\mb{k}\in \Gamma} \AqbM  \phi_{\mb{k}}(\cdot) -\yq_i(\cdot)}_\nu^2} \\
  &= \Esp{ \norm{ \byqi(\cdot) - \yq_i(\cdot) +  \sum_{\mb{k}\in\Gamma} \left( \AqbM - \Aqb \right) \phi_{\mb{k}}(\cdot) }_\nu^2}.
\end{align*}
Then, using the triangular inequality for $\norm{\cdot}_\nu$ norm and the inequality $(a+b)^2 \leq 2a^2 + 2b^2$, we obtain
\begin{align}\label{eq:decomp:ei}
  \varepsilon_i &\leq 2 \Esp{\norm{ \byqi(\cdot) - \yq_i(\cdot)}_\nu^2} + 2\Esp{\norm{\sum_{\mb{k}\in\Gamma} \left( \AqbM - \Aqb \right) \phi_{\mb{k}}(\cdot)}_\nu^2}:=\varepsilon_{i}^{(a)}+\varepsilon_{i}^{(b)}.
\end{align}
The first term $\varepsilon_{i}^{(a)}$ is bounded taking into account Lemma \ref{lemma:empQEE}:
\begin{align}
\label{eq:bound:eia}
\varepsilon_{i}^{(a)} \leq 2 \cE_{i},
\end{align}
Regarding the second term $\varepsilon_{i}^{(b)}$, the orthonormality of $\{\phi_{\mb{k}}(\cdot)\}$ gives
\begin{align}
\varepsilon_{i}^{(b)}
 &= 2 \sum_{\mb{k}\in\Gamma} \E \left[ \left(\AqbM - \Aqb \right)^2\right] \nonumber \\
 & = 2 \sum_{\mb{k}\in\Gamma} \Var{\AqbM - \Aqb} + 2 \sum_{\mb{k}\in\Gamma} \left( \E\left[ \AqbM - \Aqb \right] \right)^2\nonumber\\
 &:=\varepsilon_{i}^{(b,c)}+\varepsilon_{i}^{(b,d)}.\label{eq:eic:eid}
\end{align}
To handle $\varepsilon_{i}^{(b,d)}$ write
$$\AqbM - \Aqb = \displaystyle\frac{1}{M}\sum_{m=1}^{M} \left( \ResqM{,m} - \Resq{,m} \right) \phi_{\mb{k}}(X_i^{i,m}),$$
and apply Lemma \ref{lemma:secondMoment}, thus obtaining
\begin{align}
\nonumber\varepsilon_{i}^{(b,d)}
& = 2 \sum_{\mb{k}\in\Gamma} \left( \E\left[ \left( \ResqM{} - \Resq{} \right) \phi_{\mb{k}}(X_i^{i}) \right] \right)^2  \\
 & \leq 2 \E\left[ \left( \ResqM{} - \Resq{} \right)^2 \right]. \label{eq:eibd}
\end{align}
Now we focus on $\varepsilon_{i}^{(b,c)}$. We recall the conditional variance formula $\Var{X} = \Esp{\Var{X\mid \cG}} + \Var{\Esp{X\mid \cG}}$, available for any scalar random variable $X$ and any sigma-algebra $\cG$: then, 
\begin{align*}
\varepsilon_{i}^{(b,c)}
& =  2 \sum_{\mb{k}\in\Gamma} \Esp{\Var{ \AqbM - \Aqb \mid \cG_{i+1:N}}} +  2\sum_{\mb{k}\in\Gamma} \Var{\Esp{\AqbM - \Aqb\mid\cG_{i+1:N}}}\\
&:=\varepsilon_{i}^{(b,c,e)}+\varepsilon_{i}^{(b,c,f)}.
\end{align*}
We start with $\varepsilon_{i}^{(b,c,e)}$:
\begin{align*}
\varepsilon_{i}^{(b,c,e)} & = 2 \sum_{\mb{k}\in\Gamma} \Esp{ \Var{\frac{1}{M} \sum_{m=1}^{M} \left( \ResqM{,m} - \Resq{,m} \right) \phi_{\mb{k}}(X_i^{i,m}) \mid \cG_{i+1:N}} } \\
 & = 2 \sum_{\mb{k}\in\Gamma} \Esp{ \frac{1}{M^2} \sum_{m=1}^{M} \Var{  \left( \ResqM{,m} - \Resq{,m} \right) \phi_{\mb{k}}(X_i^{i,m}) \mid \cG_{i+1:N}} } \\
 & \leq 2 \sum_{\mb{k}\in\Gamma} \Esp{ \frac{1}{M^2} \sum_{m=1}^{M} \norm{\phi_{\mb{k}}(\cdot)}^2_\infty \Esp{  \left( \ResqM{,m} - \Resq{,m} \right)^2  \mid \cG_{i+1:N}}} \\
 & \leq \frac{2 \LG}{M} \Esp{ \left( \ResqM{} - \Resq{} \right)^2}.
\end{align*}
We continue with $\varepsilon_{i}^{(b,c,f)}$:  use Lemma \ref{lemma:secondMoment} to write
\begin{align*}
\varepsilon_{i}^{(b,c,f)}&= 2 \sum_{\mb{k}\in\Gamma} \Var{\E\left[\AqbM - \Aqb\mid\cG_{i+1:N}\right]} \\
& \leq 2 \sum_{\mb{k}\in\Gamma} \E\left[ \left( \E\left[\AqbM - \Aqb\mid\cG_{i+1:N}\right] \right)^2\right] \\
 & = 2  \Esp{ \sum_{\mb{k}\in\Gamma} \left( \Esp{ \left( \ResqM{} - \Resq{} \right) \phi_{\mb{k}}(X_i^i)\mid\cG_{i+1:N}}\right)^2} \\
 & \leq 2  \Esp{  \Esp {\left( \ResqM{} - \Resq{} \right)^2 \mid\cG_{i+1:N}} } \\
& = 2  \Esp{ \left( \ResqM{} - \Resq{} \right)^2 }. 
\end{align*}
Summarizing the above computations, we get a bound for $\varepsilon_{i}^{(b,c)}$:
\begin{equation}\nonumber
\varepsilon_{i}^{(b,c)} \leq 2 \left( \dfrac{\LG}{M} + 1 \right) \Esp{ \left( \ResqM{} - \Resq{} \right)^2}.
\end{equation}
All in all, in view of \eqref{eq:eic:eid}-\eqref{eq:eibd} and the above, we have derived a bound on $\varepsilon_{i}^{(b)}$:
\begin{equation}\label{eq:eib}
\varepsilon_{i}^{(b)}
\leq 2  \left( \frac{\LG}{M} + 2\right) \E\left[\left(\ResqM{} - \Resq{}\right)^2 \right].
\end{equation}
It remains to estimate the above term with the difference of the responses. Using the Cauchy-Schwarz inequality for the summation in $j$, and  taking into account that $f$ is Lipschitz in $y$, we obtain
\begin{align*}
  &\left(\ResqM{} - \Resq{} \right)^2 \\
  & =\dfrac{1}{(1+|\Xii|^2)^q }\Bigg( \sum_{j=i}^{N-1} \Big( f_j\big(\Xij, \cT_{L^\star}(\byqM_{j+1}(\Xijp)(1+|\Xijp|^2)^{q/2})\big) \\
  &\qquad\qquad\qquad\qquad\qquad- f_j\big( \Xij, \yq_{j+1}(\Xijp)(1+|\Xijp|^2)^{q/2} \big) \Big) \Delta\Bigg)^2  \\
  &\leq \dfrac{(T-t_i)}{(1+|\Xii|^2)^q } \sum_{j=i}^{N-1} \Bigg( f_j\left(X_j^i, \cT_{L^\star}(\byqM_{j+1}(\Xijp)(1+|\Xijp|^2)^{q/2})\right) \\
    &\qquad\qquad\qquad\qquad\qquad- f_j\left(X_j^i, \yq_{j+1}(\Xijp)(1+|\Xijp|^2)^{q/2} \right) \Bigg)^2\Delta \\
\intertext{using that ${\cT_{L^\star}}(\yq_{j+1}(\mb{x})(1+|\mb{x}|^2)^{q/2})=\yq_{j+1}(\mb{x})(1+|\mb{x}|^2)^{q/2}$ and that ${\cT_{L^\star}}$ is $1$-Lipschitz (see Proposition \ref{prop:bound} and Definition \eqref{eq:TL:2})}
& \leq  \dfrac{(T-t_i)}{(1+|\Xii|^2)^q }\sum_{j=i}^{N-1} L_{f}^2\left| \byqM_{j+1}(\Xijp) -  \yq_{j+1}(X_{j+1}^i)\right|^2(1+|\Xijp|^2)^q \Delta.
\end{align*}
Combine this with \eqref{eq:eib} to obtain,
\begin{align*}
 \varepsilon_{i}^{(b)}
 & \leq 2  \left( \frac{\LG}{M} + 2\right) T L_{f}^2 \sum_{j=i}^{N-1} \E\left[\dfrac{(1+|\Xijp|^2)^q}{(1+|\Xii|^2)^q }\left| \byqM_{j+1}(X_{j+1}^i) -  \yq_{j+1}(X_{j+1}^i)\right|^2\right] \Delta.
\end{align*}
Finally, taking advantage of the norm-stability property of the distribution $\nu$ (see \eqref{eq:prop:uses}) we get,
\begin{align} \label{eq:bound:eib}
 \varepsilon_{i}^{(b)} \leq 2 \left( \frac{\LG}{M} + 2\right) T  L_{f}^2 c_\nu \sum_{j=i}^{N-1}  \varepsilon_{j+1} \Delta.
\end{align}
Let us set $C_\varepsilon := 2 ( \dfrac{\LG}{M} + 2) T L_{f}^2 c_\nu$. At this point, in view of \eqref{eq:decomp:ei}, \eqref{eq:bound:eia} and \eqref{eq:bound:eib} we have obtained 
\begin{align*}
 \varepsilon_i \leq & 2\cE_i + C_\varepsilon \sum_{j=i}^{N-1}  \varepsilon_{j+1} \Delta.
\end{align*}
Apply now \cite[Proposition 2.4]{gt2016} with $\alpha=\beta=1/2$ to get the existence of a constant $\cC(C_{\varepsilon},T)$
 (depending only on $C_{\varepsilon},T$) such that 
 \begin{align*}
 \varepsilon_i \leq 2\cE_i + \cC(C_{\varepsilon},T) C_\varepsilon \sum_{j=i}^{N-1} \cE_{j+1}\Delta.
\end{align*}
We are done.
\qed

 \subsection{Proof of Proposition \ref{prop:bound}}
\label{subsectionProof of Proposition prop:bound}
Without loss of generality, we can assume from now that $\eta_f=\eta_g$ because assumptions on $f$ and $g$ are still valid by choosing the maximum parameter between $\eta_f$ and $\eta_g$. Set $\eta:=\eta_f=\eta_g$. First note that the constant $C_\eta$ in \eqref{eq:prop:bound:Ceta} is finite since $b$ and $\sigma$ are bounded, this follows from standard manipulations which are left to the reader. 

Second we prove \eqref{eq:prop:bound} by induction on $i=N,\dots,0$. It is obvious for $i=N$ since $C_y\geq C_g$ (note that $C_\eta\geq 1$). Now assume that
\begin{align}
\label{eq:tmp:1}
\sup_{\mb{x}\in \R^d} \dfrac{|y_{j}(\mb{x})|}  {(1+|\mb{x}|^2)^{\frac\eta 2}}  \le C_j
\end{align}
for $j=N,\dots,i+1$ for some $i\in\{N,\dots,0\}$ and for some finite $C_j$'s, and let us prove that the inequality holds also for $j=i$. In the proof below, we will get a relation  between the $C_j$'s leading to the statement of Proposition \ref{prop:bound}. We start from  \eqref{eq:MDP:fcs} and make use of the growth assumptions on $f$, $g$ and the Lipschitz property of $f$: we have
\begin{align*}
|y_i(\mb{x})|&\leq \E\Bigg[  C_g(1+|X_N|^2)^{\frac\eta 2}\\
&\qquad\quad+\sum_{j=i}^{N-1} \left(C_f(1+|X_j|^2)^{\frac\eta 2}+L_fC_{j+1} (1+|X_{j+1}|^2)^{\frac\eta 2}\right)\Delta\mid X_i=\mb{x}\Bigg]\\
&\leq (1+|\mb{x}|^2)^{\frac\eta 2} \left(C_g C_\eta+TC_f C_\eta+C_\eta L_f \sum_{j=i+1}^{N}  C_j\Delta\right).
\end{align*}
Therefore, \eqref{eq:tmp:1} holds for $j=i$ too, and thus owing to the induction principle, it holds for any $j\in\{0,\dots,N\}$. Moreover, we can take
$$C_i:=C_g C_\eta+TC_f C_\eta+C_\eta L_f \sum_{j=i+1}^{N}  C_j\Delta.$$
The Gronwall lemma implies that $C_i\leq C_\eta(C_g +TC_f )e^{C_\eta L_f T}.$
\qed

 
\section{GPU implementations}
\label{section:GPU implementations}
Two implementations on GPUs of Algorithm \ref{alg:grmdp} are proposed in this section. The first one is a parallelization of the algorithm on a single GPU, we will refer to it as the (mono-)GPU version. This version includes two kernels, one simulates the paths of the forward process and computes the associated responses, the {other one} computes the regression coefficients $(\Aq, \mbk \in \Gamma)$. In the first kernel the initial value of each simulated path of the forward process is stored in a device vector in global memory, it will be read later in the second kernel. In order to minimize the number of memory transactions and therefore maximize performance, all accesses to global memory have been implemented in a coalesced way. The random numbers needed for the path generation of the forward process were generated on the fly (inline generation) taking advantage of the NVIDIA cuRAND library \cite{ref:curand} and the generator MRG32k3a proposed by L'Ecuyer in \cite{ecuyer:99}. Therefore, inside this kernel the random number generator is called as needed. Another approach would be the pre-generation of the random numbers in a separate previous kernel, storing them in GPU global memory and reading them back from this device memory in the next kernel. Both alternatives have advantages and drawbacks. In this work we have chosen inline generation having in mind that this option is faster and saves global memory. Besides, register swapping was not observed on the implementation and the quality of the obtained solutions is similar to the accuracy of pure sequential traditional CPU solutions achieved employing more complex random number generators. Additionally, the pre-generation of random numbers is not feasible when dealing with a huge number of simulations $M$, as required for high dimensional problems, since it will require more memory than the available in the GPU. In the second kernel, in order to compute the regression coefficients, a parallelization not only over the multi-indices $\mbk \in \Gamma$ but also over the simulations $1\leq m \leq M $ was proposed. Thus, blocks of threads parallelize the outer for loop $\forall \mbk \in \Gamma$, whilst the threads inside each block carry out in parallel the inner loop traversing the vectors of the responses and the simulations. We stress that the parallel sum reduction \eqref{eq:Aqi:bar:M} has been implemented in shared memory avoiding bank conflicts in pursuance of getting the most of the GPU.

A host can have several GPUs attached (in our machine, 4 GPUs TITAN BLACK). When dealing with high dimensional problems demanding a huge number of simulations $M$ it is compelling to harness the computing power of these extra GPUs to further reduce execution times. The approach we have followed is to use OpenMP \cite{ref:openmp} to control  all GPUs inside the host node. This strategy involves launching as many CPU threads as GPUs available on the node, so that each CPU thread handles one GPU. We will allude this version as M(ulti)GPU. In order to simulate the forward process and compute the responses, each GPU runs in parallel the first previous mono-GPU kernel, storing in its global memory the initial Student random variable of each path and the computed responses of the paths. Having done that, in order to compute the regression coefficients, the outer loop $\forall \mbk \in \Gamma$ is executed sequentially in CPU, whereas the inner loop on the simulations $1 \leq m \leq M$ is performed in parallel across all GPUs. More precisely, the parallel reduction operation is executed in two levels. The first reduction level is carried out inside each one of the GPUs, it has been performed using the \textit{reduce} primitive of the NVIDIA Thrust library \cite{ref:thrust}. The second level of reduction was made in the plane of CPU threads. This approach of parallelizing  only over the simulations of the forward process behaves optimally when the number of simulations is large enough to fully occupy the GPU. In fact, this is the scenario we will be dealing with when tackling high dimensional problems. Actually, as sketched in Theorem \ref{th:main}, in order to get accurate approximations we need $\frac{2^d \#\Gamma}{M}$ to be small, therefore one need to choose $M$ much bigger than $\#\Gamma$. This fact motivates a small variant of this multi-GPU version, specifically designed for problems where $M$ is so large that we cannot store the initial values of the simulated paths in the set of global memories of the GPUs. In the first kernel in charge of simulating the paths of the forward process and computing the associated responses, now the initial values of the forward process at each time step are not stored, thus saving $d\times M \times 4$ bytes of global memory\footnote{Single precision data type is assumed.}. Instead, we just store the initial random seeds of the launched threads, which will be used in the next kernel in order to resimulate the same initial values of the forward process and allow the computation of the regression coefficients. This strategy is even as efficient as the previous one, on the grounds that GPUs are (most often) faster recomputing data than getting this data back from global memory. This is due to the fact that the version recomputing data is less memory bounded than the first one.


\section{Numerical experiments} \label{sec:numExperiments}

We introduce the following functions 
\begin{align*}
  g(\mbx) &:= 1 + \kappa + \sin\left(\lambda \sum_{l=1}^d x_l\right), \\
  f(t,\mbx,y) &:= \min\left(1, \left[ y - \kappa - 1 -\sin\left( \lambda \sum_{l=1}^d x_l\right)\exp\left(\dfrac{\lambda^2 d(t-T)}{2}\right) \right]^2 \right).
\end{align*}
We aim to solve the PDE:
\begin{align}
 \begin{split} & \partial_t u(t,\mbx) + \dfrac{1}{2} (\Delta_x u)(t,\mbx) + f(t,\mbx,u(t,\mbx)) = 0, \quad t\leq T,\\
 &u(T,\mbx) = g(\mbx). 
 \end{split}\label{eq:pde2OnlyY}
\end{align}
Note that $$u(t,\mbx) = 1+\kappa+\sin\left(\lambda \sum_{l=1}^d x_l\right)\exp\left(\dfrac{\lambda^2 d (t-T)}{2}\right),$$ is solution to \eqref{eq:pde2OnlyY}.
The probabilistic formulation goes as follows:
\begin{align*}
 \dd X_t &= \dd W_t, \\
- \dd Y_t &= f(t,X_t,Y_t)\dt - Z_t\dd W_t,\\
 Y_T &=  g(X_T).
\end{align*}
Then, the processes $(Y,Z)$ satisfy $Y_t = u(t,X_t)$ and $Z_t = \nabla_x u(t,X_t)$. 
For the numerical results the final time $T$ is fixed to 1, $\kappa = \frac{6}{10}$ and $\lambda = \frac{1}{\sqrt{d}}$.

The $\mu$ parameter of the $\nu_l$ Student's t-distribution is chosen as $\mu = 2$. Under this choice $\nu_l(x_l) = 0.5 (1+x_l^2)^{-3/2}$. The marginal CDF and its inverse are given by
\begin{equation}\label{eq:Fnui:Fnulinv}
\begin{split}\Fnul(x)  &= \dfrac{1}{2}\left(\dfrac{x}{\sqrt{x^2+1}}+1\right), \\
    \Fnulinv{l}(u) &= \left\{ \begin{array}{lcc}
             - \sqrt{\dfrac{-u^2+u-0.25}{u(u-1)}} &   \mbox{if}  & 0 < u \leq 0.5, \\ \\
             \sqrt{\dfrac{-u^2+u-0.25}{u(u-1)}} & \mbox{if} & 0.5 < u < 1. \\
             \end{array}
   \right.
   \end{split}
   \end{equation}
   In order to sample according to $\nu$, we draw $d$ independent random variables $(U_1,\ldots,U_d)$ with uniform distribution on $(0,1)$, and we compute
$$X_{i}^i:=\left(\Fnulinv{1}(U_1),\dots,\Fnulinv{d}(U_d)\right)\overset{d}{\sim} \nu.$$

First we present some numerical results in dimension 1. Taking into account that 
\begin{align*}
\dyi{r}(x) &= \pm \lambda\exp\left(\dfrac{t-1}{2}\right) \left[ \1_{r \tiny{\mbox{ even}}} \sin(x) + \1_{r \tiny{\mbox{ odd}}} \cos(x)\right],
\end{align*}
in order to fit the assumptions of Lemma \ref{lemma:lr_lims_zero} we select $p=r$. The values of $q$ are selected using relation \eqref{eq:lr_lims_zero:weaker} under $p=r$ for $r=0,1,2$ and $3$. Therefore, for $q=0$ only the function $\hqi$ is square integrable; for $q=2.1$ the function $\hqi$ and its first spatial derivative  are square integrable and the function $\hqi$ vanish for $u\to 0^+,1^-$; finally, for $q=8.1$ the function $\hqi$ and its first two spatial derivatives are square integrable and vanish for $u\to 0^+,1^-$, its third spatial derivative is square integrable, as well.

In Figure \ref{fig:1D:yqi}, the analytical solution $\yqi$ versus the computed approximation $\byqiM$ are plotted for $t=0.5$ and the values of $q=0,2.1,5.1,8.1$.  The function was approximated on $1000$ independent and identically $\nu$ distributed simulations. As long as $q$ increases the convergence of the algorithm is improved (the truncation  error shrinks to 0, coherently with Proposition \ref{proposition:approError1}). In Figure \ref{fig:1D:hqi} the corresponding results are presented for $\hqi$ and $\bar{h}_{i}^{(q,M)}$; for $q$ large the functions are very flat at the boundaries $0$ and $1$, as expected. The parameters of the GQRMDP algorithm were chosen $\Delta=0.01$, $K=200$ meaning that $\yqi$ was projected over $201$ basis functions, and $M=2\times 10^6$. This selection makes $\frac{2^d \#\Gamma}{M}$ small, in fact it is $2.01\times 10^{-4}$, thus we control well the statistical error of the algorithm.
\begin{figure}[htbp]
\centering
 \subfigure{\includegraphics[scale=0.4]{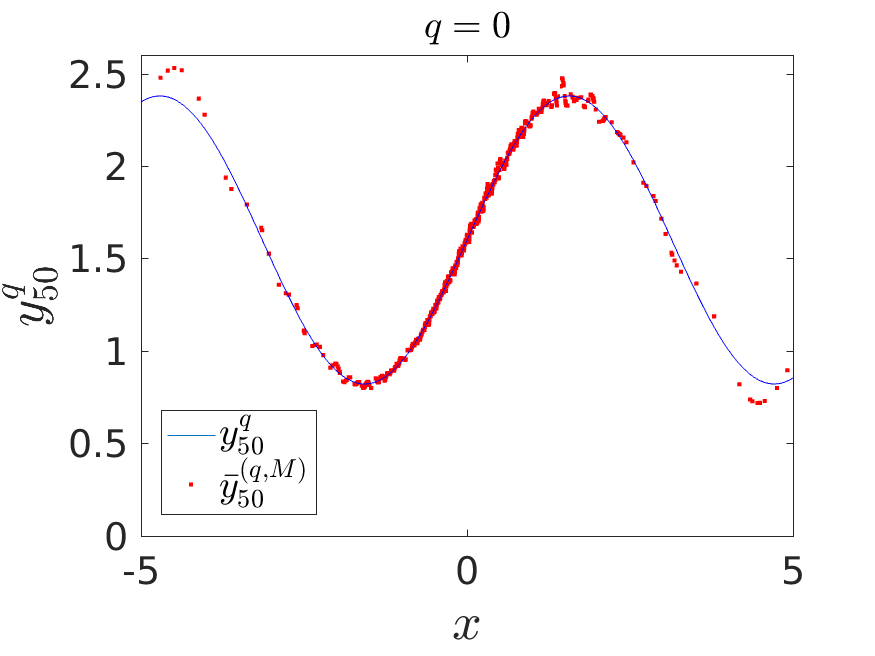}}
 \subfigure{\includegraphics[scale=0.4]{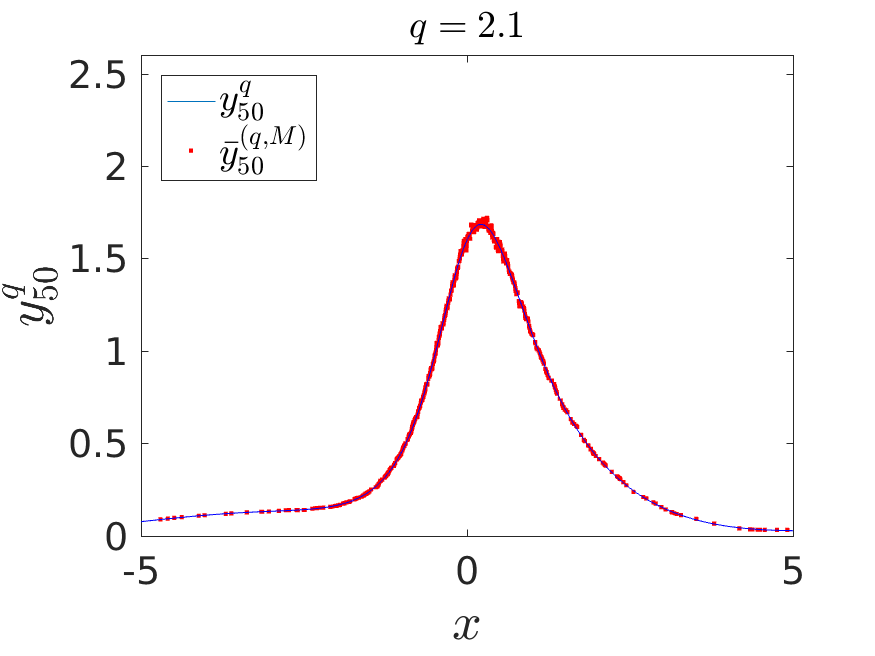}}
 \subfigure{\includegraphics[scale=0.4]{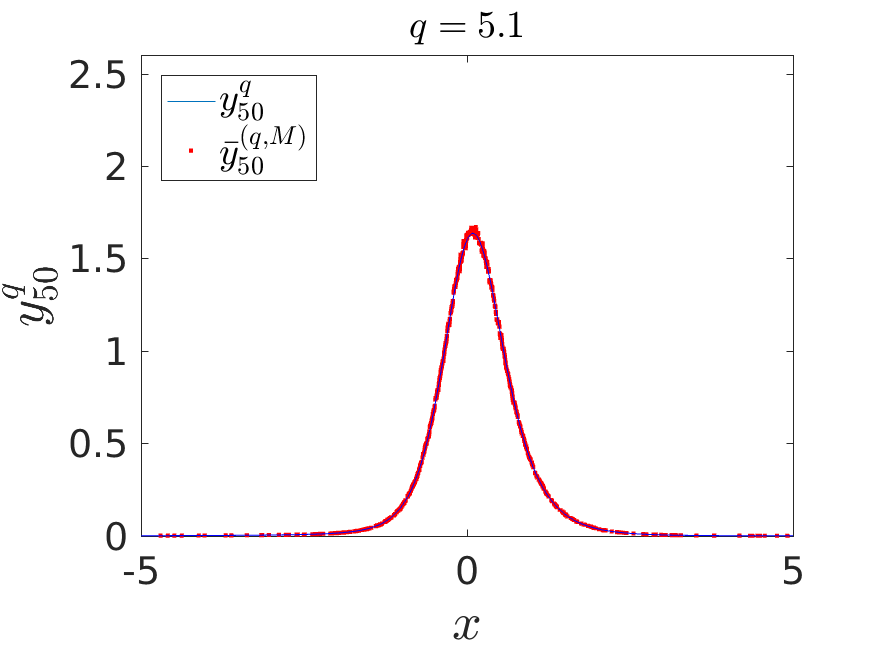}} 
 \subfigure{\includegraphics[scale=0.4]{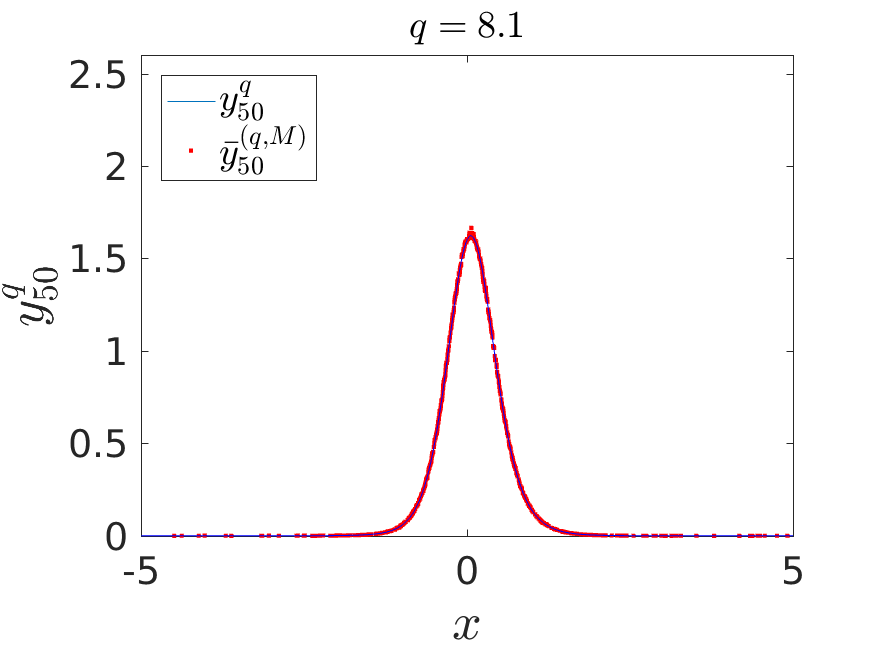}}   
\caption{For $t=0.5$, $\yqi$ versus $\byqiM$ considering $q=0,2.1,5.1,8.1$. $\byqiM$ were computed using $\Delta=0.01$, $K=200$, $M=2\times 10^6$.} 
\label{fig:1D:yqi}
\end{figure}
\begin{figure}[htbp]
\centering
 \subfigure{\includegraphics[scale=0.4]{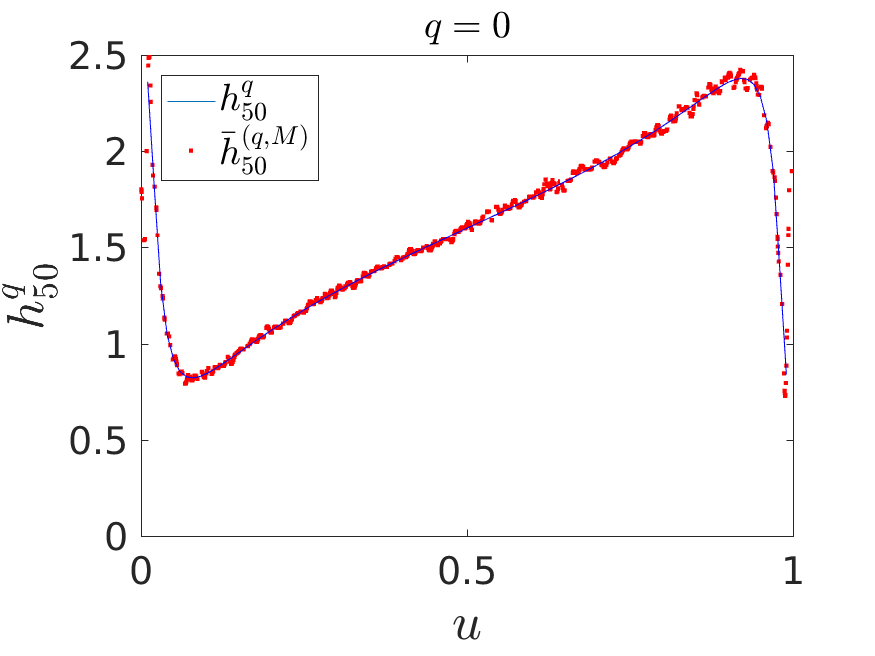}}
 \subfigure{\includegraphics[scale=0.4]{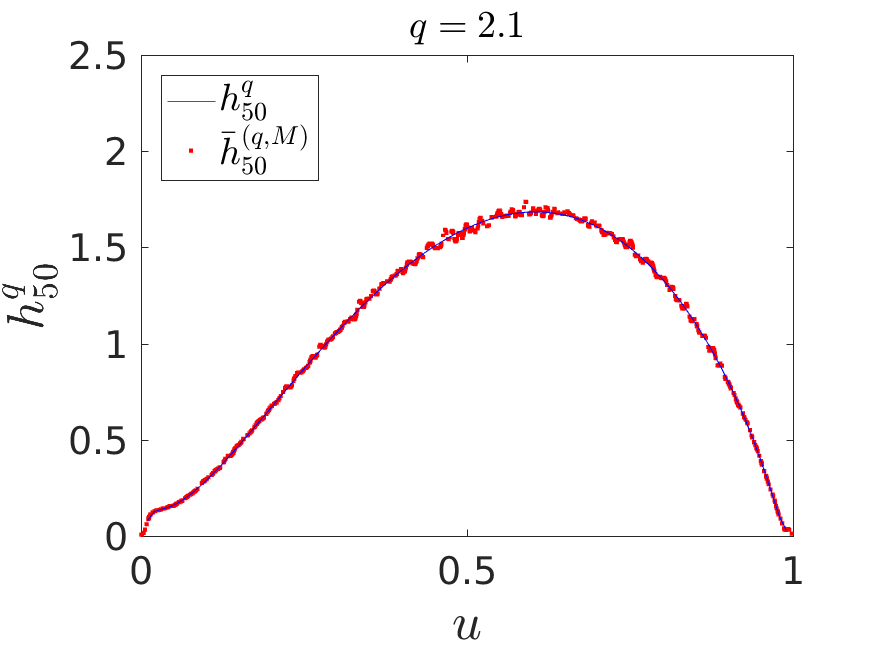}}
  \subfigure{\includegraphics[scale=0.4]{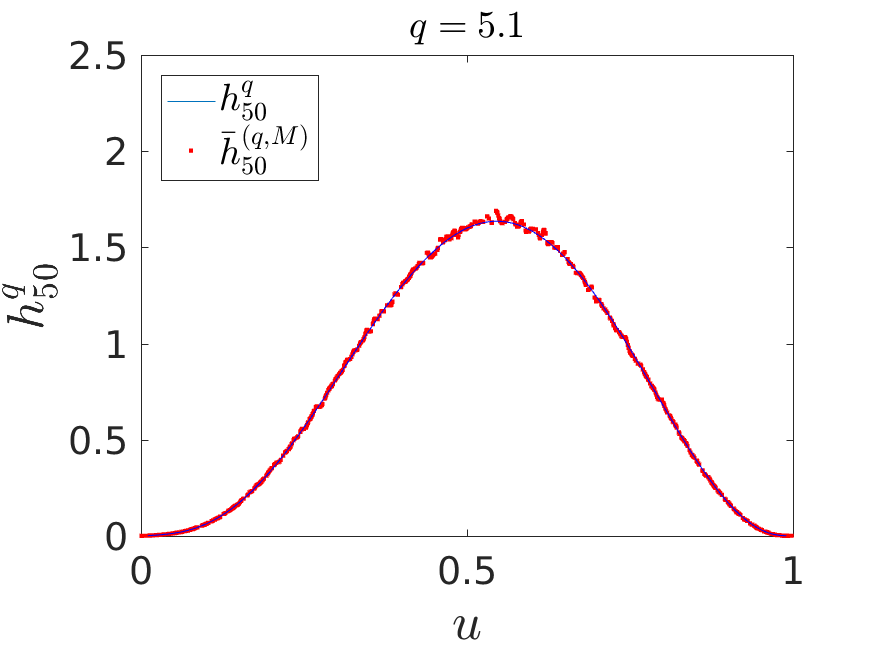}} 
  \subfigure{\includegraphics[scale=0.4]{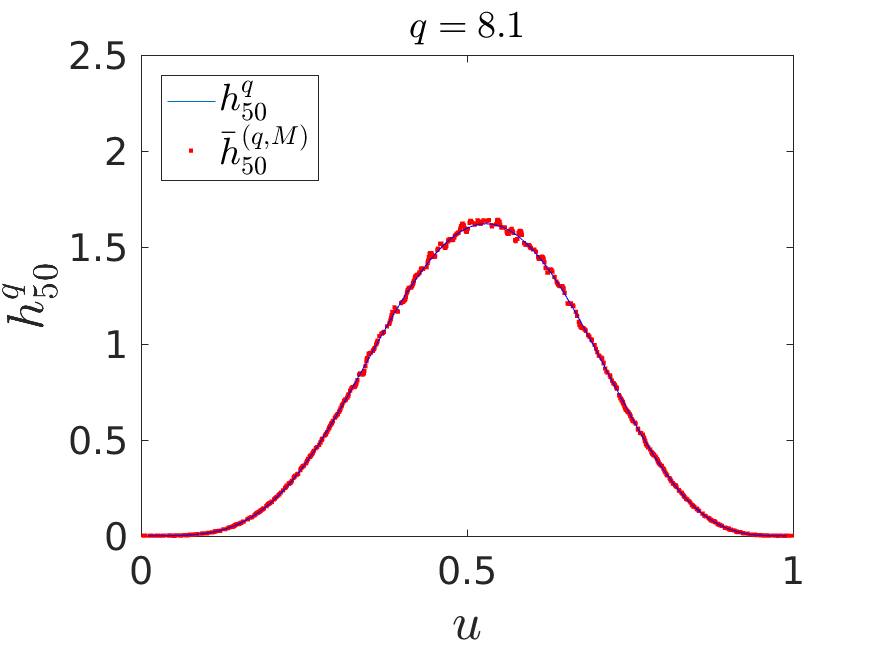}} 
\caption{For $t=0.5$, $\hqi$ versus $\bar{h}_{i}^{(q,M)}$ considering $q=0,2.1,5.1,8.1$. $\bar{h}_{i}^{(q,M)}$ were computed using $\Delta=0.01$, $K=200$, $M=2\times 10^6$.} 
\label{fig:1D:hqi}
\end{figure}

In order to assess  the performance of the algorithm, we compute the average mean squared error ($MSE$) over $10^3$ independent runs of the algorithm for two error indicators:
\begin{equation}
\begin{split}
MSE_{\text{max}}&:= \ln \left\{ 10^{-3} \max_{0\le i \le N-1} \sum_{m=1}^{10^3} \left| \yqi(R_{i,m}) - \byqiM(R_{i,m}) \right|^2 \right\} ,\\
MSE_{\text{av}}&:= \ln \left\{ 10^{-3} N^{-1} \sum_{m=1}^{10^3} \sum_{i=0}^{N-1}  | \yqi(R_{i,m}) - \byqiM(R_{i,m})|^2 \right\} ,
\end{split}
\label{mse}
\end{equation}
where the simulations $\{R_{i,m}; i = 0,\ldots,N-1, \ m=1,\ldots,10^3\}$ are independent and identically $\nu$-distributed, and independently drawn from the simulations used for the GQRMDP scheme.

In order to test the theoretical results we compare the performance of the algorithm according to the choice of the damping exponent $q$ and the multi-indices set. We will show the impact of these choices on the convergence of the approximation of the semi-linear PDE and on the computational performance in terms of computational time and memory consumption.

The numerical experiments have been performed with the following hardware and software configurations: four GPUs GeForce GTX TITAN Black with $6$ GBytes of global memory per device (see \cite{CUDA:keplerArchitecture} for details in the architecture), two multicore Intel Xeon CPUs E5-2620 v2 clocked at $2.10$ GHz ($6$ cores per socket) with $62$ GBytes of RAM, CentOS Linux, NVIDIA CUDA SDK 7.5 and INTEL C compiler 15.0.6. The CPU programs  were optimized and parallelized using OpenMP \cite{ref:openmp}. The multicore CPUs time (CPU) and the GPU time (GPU) will all be measured in seconds in the forthcoming tables. CPU times correspond to executions using $24$ threads so as to take advantage of Intel Hyperthreading. {Unless otherwise indicated}, the results are obtained using single precision, both in CPU and GPU.

All examples will be run using $64$ thread blocks, each with $256$ threads. In the following Table \ref{table:d1_mse} we show results for $d=1$ and the choices of the damping exponent $q=0,2.1,5.1,8.1$. As we have just seen in Figure \ref{fig:1D:yqi}, we expect (see Proposition \ref{proposition:approError1}) that the convergence of the algorithm is improved for large values of the damping exponent $q$. Besides, as long as the time step decreases, our parameter choice is such that $K$ increases and $\frac{2 K}{M}$ decreases so that the algorithm is more and more accurate. The GPU implementation provides a significant reduction in the computational time: the GPU speed-up reaches the value $12.2$. One usually expects a GPU speed-up factor between $3$x-$10$x vs two sockets of CPU \cite{VictorWLee}, therefore both CPU and GPU codes were thoroughly optimized. 

\begin{table}[h]
\caption{$d=1$.}
\label{table:d1_mse}
{\footnotesize
\begin{tabular}{|r|r|r||rr|rr|rr|}
\hline
 & & & \multicolumn{2}{c|}{$q=0$} & \multicolumn{2}{c|}{$q=2.1$} & \multicolumn{2}{c|}{$q=5.1$} \\
\hline         
$\Delta$ & $K$ & $M$ & $MSE_{\text{max}}$ & $MSE_{\text{av}}$ & $MSE_{\text{max}}$ & $MSE_{\text{av}}$ & $MSE_{\text{max}}$ & $MSE_{\text{av}}$ \\
\hline
\hline
 $0.05$ & $100$ & $2\times 10^4$ & $-3.658$ & $-3.868$ & $-4.615$ & $-4.874$ & $-4.517$ & $-5.130$  \\ 
\hline
 $0.02$ & $150$ & $2\times 10^5$ & $-4.427$ & $-4.900$ & $-6.450$ & $-6.740$ & $-6.685$ & $-7.094$  \\ 
\hline
 $0.01$ & $200$ & $2\times 10^6$ & $-4.704$ & $-5.524$ & $-8.465$ & $-8.729$ & $-8.717$ & $-9.096$  \\ 
\hline
\end{tabular}
\begin{tabular}{|r|r|r||rr|r|r|}
\hline
 & & & \multicolumn{2}{c|}{$q=8.1$}  &  &  \\
\hline         
$\Delta$ & $K$ & $M$ & $MSE_{\text{max}}$ & $MSE_{\text{av}}$ & CPU & GPU \\
\hline
\hline
 $0.05$ & $100$ & $2\times 10^4$ & $-4.781$ & $-5.319$ & $0.94$ & $0.09$ \\ 
\hline
 $0.02$ & $150$ & $2\times 10^5$ & $-6.740$ & $-7.221$ & $89.31$ & $7.37$ \\ 
\hline
 $0.01$ & $200$ & $2\times 10^6$ & $-8.830$ & $-9.301$ & $5056.16$ & $414.44$ \\ 
\hline
\end{tabular}
}
\end{table}

In order to assess  the accuracy of the proposed algorithm, in the following Table \ref{table:d1_accuracy}, for $t=0$, $q=0$ and different values of $N$, $K$ and $M$, $99\%$ confidence intervals for $\byqiM(0)$ are shown in the knowledge that $u(0,0) = 1.6$. These confidence intervals were computed using 50 independent runs of the algorithm. Since we are computing the solution at the origin, similar results are obtained for values of $q>0$.
\begin{table}[h]
\caption{$d=1$, $i=0$, $99\%$ confidence intervals for $\byqiM(0)$. $\yqi(0) = 1.6$.}
\label{table:d1_accuracy}
\begin{tabular}{|r|r|r||r|r|}
\hline
$\Delta$ & $K$ & $M$ & $q=0$   \\
\hline
\hline
 $0.05 $ & $100$ & $2\times 10^4$ & $[1.578,1.667]$ \\ %
\hline
$0.02 $ & $150$ & $2\times 10^5$ & $[1.595,1.624]$   \\ %
\hline
 $0.01$ & $300$ & $2\times 10^7$ & $[1.598,1.612]$  \\ %
\hline
\end{tabular}
\end{table}

Now we present results in dimensions greater than one, for which the total degree and the hyperbolic cross index sets are different. {Coherently with Section \ref{sec:approError:Multidimensional}, we start showing the behaviour of the algorithm when the discussed conditions are enforced only on coordinate-wise derivatives of $\hqi$.} Table \ref{table:d2_mse} shows results for $d=2$. Convergence is clearly improved by increasing $q$, considering $q=5.1$ ($p=r=2$), not only $\hqi$ and its first two coordinate-wise derivatives are square integrable but also $\hqi$ and its first coordinate-wise derivatives vanish at the boundaries of $[0,1]^2$. Additionally, convergence is enhanced as long as $\Delta$ decreases: $\DEG$ and  the number of simulations $M$ increase in a way such that $\frac{2^2 \#\Gamma}{M}$ decreases. Finally, the hyperbolic cross index set offers better performance than the total degree set, not only an improved rate of convergence but also much faster computational times. In fact, for $\Delta=0.01$, the hyperbolic cross index set is not only almost $3.4$ times faster than the total degree set (on GPU), but also more accurate achieving \mbox{$MSE_{\text{av}} = -10.137$} instead of \mbox{$MSE_{\text{av}} = -9.107 $}. The GPU speed-up with respect to a fully optimized CPU parallel version reaches the value $11.21$.
\begin{table}[h]
\caption{$d=2$.}
\label{table:d2_mse}
{\footnotesize
\begin{tabular}{|c|r|rr|r||rr|rr|r|r|}
\hline
& & & & & \multicolumn{2}{c|}{$q=0$} & \multicolumn{2}{c|}{$q=5.1$} &  & \\
\hline         
& $\Delta$ & $\DEG$ & $\#\Gamma(\cdot)$ & $M$ & $MSE_{\text{max}}$ & $MSE_{\text{av}}$ & $MSE_{\text{max}}$ & $MSE_{\text{av}}$ & CPU & GPU \\
\hline
\hline
\multirow{3}{*}{\begin{sideways}Total\end{sideways}} & $0.05$ & $20$ & $231$ & $2 \times 10^4$ & $-2.322$ & $-2.773$ & $-4.646$ & $-5.239$ & $3.24$ & $0.33$   \\ 
\cline{2-11}
& $0.02$ & $25$ & $351$ & $2 \times 10^5$ & $-2.869$ & $-3.688$ & $-6.761$ & $-7.186$ & $362.55$ & $35.51$ \\ 
\cline{2-11}
& $0.01$ & $31$ & $528$ & $2\times 10^6$ & $-2.978$ & $-3.966$ & $-8.793$ & $-9.107$ & $25144.70$ & $2243.06$\\ 
\hline
\multirow{3}{*}{\begin{sideways}Hyper.\end{sideways}} & $0.05$ & $19$ & $99$ & $2 \times 10^4$ & $-2.526$ & $-3.062$ & $-5.588$ & $-6.196$ & $1.43$ & $0.16$  \\ 
\cline{2-11}
& $0.02$ & $24$ & $133$ & $2 \times 10^5$ & $-2.745$ & $-3.550$ & $-7.487$ & $-8.191$ & $129.99$ & $12.38$\\ 
\cline{2-11}
& $0.01$ & $30$ & $172$ & $2\times 10^6$ & $-2.898$ & $-3.770$ & $-9.389$ & $-10.137$ & $7221.68$ & $662.54$\\ 
\hline
\end{tabular}
}
\end{table}

Tables \ref{table:d3_mse} and \ref{table:d5_mse} show results for $d=3$ and $5$, respectively. As before, the convergence of the algorithm improves for large values of the damping exponent, so that we benefit from small approximation error (see Proposition \ref{proposition:approErrord} with the conditions  \eqref{eq:lr_lims_zero:d}-\eqref{eq:lr_lims_zero:weaker:d}). Furthermore, the hyperbolic cross index set outperforms the total degree set (which can be expected from the equation \eqref{eq:multiDimTrunErrorIntPartsrMultivariate}). Except for the cases with $\Delta=0.05$, where there are not enough simulations to fully take advantage of four GPUs, the multi-GPU implementation provides a significant reduction in the computational time: the speed-up with respect to the mono-GPU version reaches the value $3.75$. The ideal speed-up would be $4$, although the multi-GPU parallelization introduces some overhead which makes this value unattainable.

\begin{table}[h]
\caption{$d=3$.}
\label{table:d3_mse}
{\footnotesize
\begin{tabular}{|c|r|rr|r||rr|rr|r|r|}
\hline
& & & & & \multicolumn{2}{c|}{$q=0$} & \multicolumn{2}{c|}{$q=5.1$} &  & \\
\hline         
& $\Delta$ & $\DEG$ & $\#\Gamma(\cdot)$ & $M$ & $MSE_{\text{max}}$ & $MSE_{\text{av}}$ & $MSE_{\text{max}}$ & $MSE_{\text{av}}$ & GPU & MGPU \\
\hline
\hline
\multirow{3}{*}{\begin{sideways}Total\end{sideways}} & $0.05$ & $22$ & $2300$ & $2 \times 10^5$ & $ -2.164$ & $-2.785$ & $-5.699$ & $-6.141$  &  $53.89$ & $121.75$ \\ 
\cline{2-11}
& $0.02$ & $24$ & $2925$ & $2 \times 10^6$ & $-2.423$ & $-3.580$ & $-7.693$ & $-8.329$ & $4361.22$ & $1351.36$\\ 
\cline{2-11}
& $0.01$ & $26$ & $3654$ & $2 \times 10^7$ & $-2.484$ & $-3.763$ & $-9.836$ & $-10.487$ & $220687.47$ & $58783.53$\\ 
\hline
\multirow{3}{*}{\begin{sideways}Hyper.\end{sideways}} & $0.05$ & $20$ & $411$ & $2 \times 10^5$ & $-2.255$ & $-3.191$ & $-6.883$ & $-7.662$ &  $8.37$ & $22.99$ \\ 
\cline{2-11}
& $0.02$ & $22$ & $459$ & $2 \times 10^6$ & $-2.249$ & $-3.356$ & $-7.778$ & $-8.875$ & $592.19$ & $191.59$\\ 
\cline{2-11}
& $0.01$ & $24$ & $528$ & $2 \times 10^7$ & $-2.248$ & $-3.380$ & $-8.387$ & $-9.576$ & $27317.65$ & $7306.78$\\ 
\hline
\end{tabular}
}
\end{table}

\begin{table}[h]
\caption{$d=5$.}
\label{table:d5_mse}
{\footnotesize
\begin{tabular}{|c|r|rr|r||rr|rr|r|}
\hline
& & & & & \multicolumn{2}{c|}{$q=0$} & \multicolumn{2}{c|}{$q=5.1$} & \\
\hline         
& $\Delta$ & $\DEG$ & $\#\Gamma(\cdot)$ & $M$ & $MSE_{\text{max}}$ & $MSE_{\text{av}}$ & $MSE_{\text{max}}$ & $MSE_{\text{av}}$ & MGPU \\
\hline
\hline
\multirow{3}{*}{\begin{sideways}$\mbox{ }$Tot.\end{sideways}} & $0.2$ & $20$ & $53130$ & $2 \times 10^6$ & $-1.906$ & $-2.049$ & $-6.537$ & $-6.994$ & $874.36$ \\ 
\cline{2-10}
& $0.1$ & $21$ & $65780$ & $2 \times 10^7$ & $-2.221$ & $-3.356$ & $-8.482$ & $-9.103$ & $19163.72$ \\
\hline
\multirow{3}{*}{\begin{sideways}Hyper.\end{sideways}} & $0.2$ & $16$ & $3042$ & $2 \times 10^6$ & $-2.353$ & $-3.259$ & $-7.312$ & $-7.835$ & $49.27$ \\ 
\cline{2-10}
& $0.1$ & $17$ & $3122$ & $2 \times 10^7$ & $-1.955$ & $-3.278$ & $-7.438$ & $-8.125$ & $773.70$ \\
\cline{2-10}
& $0.1$ & $36$ & $10503$ & $2 \times 10^7$ & $-2.216$ & $-3.494$ & $-8.609$ & $-9.407$ & $2718.07$ \\ 
\cline{2-10}
& $0.1$ & $66$ & $24893$ & $2 \times 10^7$ & $-2.381$ & $-3.603$ & $-8.819$ & $-9.703$ & $6563.77$ \\ 
\hline
\end{tabular}
}
\end{table}

Finally, results for $d=7$, $9$ and $11$ are shown in Table \ref{table:d7_9_11_mse} considering only the scenario performing better, i.e. the multi-GPU version considering a large value of the damping exponent along with the hyperbolic cross index set for the selection of the basis functions on which to project $\yqi$.

\begin{table}[h]
\caption{$d=7, 9, 11$, hyperbolic cross index set.}
\label{table:d7_9_11_mse}
{\footnotesize
\begin{tabular}{|r|r|rr|r||rr|r|}
\hline
& & & & & \multicolumn{2}{c|}{$q=5.1$} &  \\
\hline         
$d$ & $\Delta$ & $\DEG$ & $\#\Gamma_H(\DEG)$ & $M$ & $MSE_{\text{max}}$ & $MSE_{\text{av}}$ & MGPU \\
\hline
\hline
$7$ & $0.1$ & $36$ & $94460$ & $2 \times 10^7$ & $-8.879$ & $-9.574$ & $33284.41$ \\ 
\hline
$9$ & $0.1$ & $12$ & $102656$ & $2 \times 10^7$ & $-8.101$ & $-8.359$ & $44437.40$ \\ 
\hline
$11$ & $0.1$ & $5$ & $75264$ & $2 \times 10^8$ & $-7.828$ & $-8.785$ & $373816.97$ \\ 
\hline
\end{tabular}
}
\end{table}

{Before the end of this section, the behavior of the algorithm is exhibited using larger values of the damping exponent $q$ that account for the case where not only coordinate-wise derivatives of $\hqi$ but also its mixed derivatives are required to vanish at the boundaries of $[0,1]^d$ and to be square integrable. Taking into account that
\begin{align*}
\partial^{(r,\ldots,r)}_x y_i(\mbx) &= \pm \lambda\exp\left(\dfrac{t-1}{2}\right) \left[ \1_{dr \tiny{\mbox{ even}}} \sin(\mbx) + \1_{dr \tiny{\mbox{ odd}}} \cos(\mbx)\right],
\end{align*}
and under $\Lambda=\{0,\ldots,r\}^d$, in order to fit the assumptions of Lemma \ref{lemma:lr_lims_zero:d:Multivariate} we select $p=dr$. In the following numerical experiments $r=2$ or $1$ will be selected. In order to avoid too large values of $q$, which  will hinder the computation of the damping factor $(1+|\mbx|^2)^{q/2}$ when single and even double precision are considered, the $\mu$ parameter of the $\nu_l$ Student's t-distribution is chosen as $\mu=1$ from now on. Under this choice we deal with the Cauchy distribution  $\nu_l(x_l) = \pi^{-1}(1+x^2)^{-1}$, whose CDF and its inverse are given by $F_{\nu_l}(x) = 0.5 + \pi^{-1}\arctan(x)$ and $F^{-1}_{\nu_l}(u) = \tan\left( \pi x - 0.5\pi\right)$, respectively.

Tables \ref{table:d2_mse:mixed} and \ref{table:d3_mse:mixed} show the behavior of the algorithm using a value of $q$ satisfying the condition \eqref{eq:lr_lims_zero:weaker:d:Multivariate} for $r=2$, which boils down to $q>2dr -0.5$ in our benchmark scenario. As expected from equation \eqref{eq:multiDimTrunErrorIntPartsrMultivariate} for the Fourier coefficients of $\hqi$, the hyperbolic cross index set clearly outperforms the total degree set. In order to take the most of our runtime environment, the codes were executed taking advantage of the whole available GPUs and the computations were performed using single precision.

\begin{table}[h]
\caption{$d=2$, $\Lambda=\{0,1,2\}^2$.}
\label{table:d2_mse:mixed}
{\footnotesize
\begin{tabular}{|c|r|rr|r||rr|r|}
\hline
& & & & & \multicolumn{2}{c|}{$q=7.6$} & \\
\hline         
& $\Delta$ & $\DEG$ & $\#\Gamma(\DEG)$ & $M$ & $MSE_{\text{max}}$ & $MSE_{\text{av}}$ & MGPU \\
\hline
\hline
\multirow{3}{*}{\begin{sideways}Total\end{sideways}} & $0.05$ & $31$ & $528$ & $2  \times 10^5$ & $-7.453$ & $-7.871$ & $27.98$ \\ 
\cline{2-8}
& $0.02$ & $41$ & $903$ & $2 \times 10^6$ & $-8.916$ & $-9.598$ & $297.99$  \\ 
\cline{2-8}
& $0.01$ & $51$ & $1378$ & $2\times 10^7$ & $-10.528$ & $-11.416$ &  $13220.48$ \\ 
\hline
\multirow{3}{*}{\begin{sideways}Hyper.\end{sideways}} & $0.05$ & $30$ & $172$ & $2 \times 10^5$ & $-7.710$ & $-8.494$ & $9.94$  \\ 
\cline{2-8}
& $0.02$ & $40$ & $239$ & $2 \times 10^6$ & $-9.019$ & $-10.152$ & $73.66$ \\ 
\cline{2-8}
& $0.01$ & $50$ & $308$ & $2\times 10^7$ & $-10.425$ & $-11.964$ & $2646.07$ \\ 
\hline
\end{tabular}
}
\end{table}

\begin{table}[h]
\caption{$d=3$, $\Lambda = \{0,1,2\}^3$.}
\label{table:d3_mse:mixed}
{\footnotesize
\begin{tabular}{|c|r|rr|r||rr|r|}
\hline
& & & & & \multicolumn{2}{c|}{$q=11.6$} & \\
\hline         
& $\Delta$ & $\DEG$ & $\#\Gamma(\DEG)$ & $M$ & $MSE_{\text{max}}$ & $MSE_{\text{av}}$ & MGPU \\
\hline
\hline
\multirow{3}{*}{\begin{sideways}\mbox{ }Tot.\end{sideways}} & $0.2$ & $142$ & $497640$ & $2 \times 10^6$ & $-5.009$ & $-5.423$ & $6950.81$ \\ 
\cline{2-8}
& $0.2$ & $152$ & $608685$ & $2  \times 10^7$ & $-6.848$ & $-7.085$ & $27347.05$ \\ 
\hline
\multirow{3}{*}{\begin{sideways}\,\,Hyper.\end{sideways}} & $0.2$ & $140$ & $4881$ & $2 \times 10^6$ & $-7.955$ & $-8.693$ & $68.73$  \\ 
\cline{2-8}
& $0.2$ & $150$ & $5322$ & $2 \times 10^7$ & $-8.173$ & $-9.039$ & $205.92$  \\ 
\cline{2-8}
& $0.1$ & $160$ & $5763$ & $2 \times 10^8$ & $-8.319$ & $-9.494$ & $8030.68$ \\ 
\hline
\end{tabular}
}
\end{table}

Ultimately, the semi-linear PDE \eqref{eq:pde2OnlyY}  is solved for dimension $5$ and $7$ in Table  \ref{table:d5_7_mse:mixed}, and for dimension $9$ and $11$ in Table \ref{table:d9_11_mse:mixed}. The GQRMDP algorithm was executed using double precision and considering only the hyperbolic cross index set for the projection of $\yqi$. For $d=5,7$, the so-discussed conditions are imposed for $\Lambda=\{0,1,2\}^d$, while for $d=9,11$, in order to avoid larger values of $q$, these smoothness conditions are only forced for $\Lambda=\{0,1\}^d$. The error indicators and the computational time for $d=7$ inside Table \ref{table:d5_7_mse:mixed} are to be compared with those of Table \ref{table:d7_9_11_mse}: as it was anticipated, for $q=27.6$ the algorithm obtains much better convergence, whereas the execution time is larger due to more accurate computations on double precision. 


\begin{table}[h]
\caption{$d=5, 7$, hyperbolic cross index set, $\Lambda=\{0,1,2\}^d$, double precision.}
\label{table:d5_7_mse:mixed}
{\footnotesize
\begin{tabular}{|r|r|rr|r|r||rr|r|}
\hline         
$d$ & $\Delta$ & $\DEG$ & $\#\Gamma_H(\DEG)$ & $M$ & $q$ & $MSE_{\text{max}}$ & $MSE_{\text{av}}$ & MGPU \\
\hline
\hline
$5$ & $0.1$ & $100$ & $45443$ & $2 \times 10^7 $ & $19.6$ & $-9.065$ & $-10.198$ & $16889.41$ \\ 
\hline
$7$ & $0.1$ & $36$ & $94460$ & $2 \times 10^7$ & $27.6$ & $-15.054$ & $-16.381$ & $48021.18$ \\  
\hline
\end{tabular}
}
\end{table}

\begin{table}[h]
\caption{$d=9, 11$, hyperbolic cross index set, $\Lambda=\{0,1\}^d$, double precision.}
\label{table:d9_11_mse:mixed}
{\footnotesize
\begin{tabular}{|r|r|rr|r|r||rr|r|}
\hline         
$d$ & $\Delta$ & $\DEG$ & $\#\Gamma_H(\DEG)$ & $M$ & $q$ & $MSE_{\text{max}}$ & $MSE_{\text{av}}$ & MGPU \\
\hline
\hline
$9$ & $0.2$ & $5$ & $14336$ & $2 \times 10^7 $ & $17.6$ & $-21.016$ & $-21.852$ & $2466.21$ \\ 
\hline
$11$ & $0.2$ & $2$ & $13312$ & $2 \times 10^7$ & $21.6$ & $-25.559$ & $-27.161$ & $2648.61$ \\  
\hline
\end{tabular}
}
\end{table}

}
\FloatBarrier

\appendix
\section{\appendixname. Technical estimates on Student's t-distribution} \label{appendix:estimatesStudent}
In this appendix, estimates on the CDF of the Student's t-distribution, its inverse and the derivatives of the inverse of the CDF are computed. As we have been doing in the article, we use the same notation $C$ for any generic constants, we will keep the same $C$ from line to line, in order the alleviate the computations, although its value changes. In this appendix $\nu_l$ stands for Student's t-distribution density \eqref{eq:studentDensity} with parameter $\mu$, $\Fnul$ its marginal CDF and $\Fnuinv$ its inverse.

\begin{lemma}[Estimates on $\Fnul$ and $\Fnulinv{l}$] \label{lemma:Fnui:Fnulinv:app} Let $x\in\R$, $u\in(0,1)$. The following estimates hold
\begin{align}\label{eq:Fnui:Fnulinv:app}
\begin{split}
   \Fnul(x)  &\sim_{x\to-\infty} \dfrac{c_\mu}{\mu |x|^{\mu} },  \\
   1-\Fnul(x)  &\sim_{x\to+\infty}  \dfrac{c_\mu}{\mu |x|^{\mu}}  , \\
   \Fnulinv{l}(u) &\sim_{u\to 0^+} -\tilde c_\mu u^{-1/\mu} ,  \\
   \Fnulinv{l}(u) &\sim_{u\to 1^-} \tilde c_\mu (1-u)^{-1/\mu},
   \end{split}
   \end{align}
      with $\tilde c_\mu= \left(\dfrac{c_\mu}{\mu}\right)^{1/\mu}.$ 
\end{lemma}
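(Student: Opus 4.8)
The plan is to establish the four asymptotic equivalences in \eqref{eq:Fnui:Fnulinv:app} by a direct tail analysis of the Student density \eqref{eq:studentDensity}, followed by an inversion argument. First I would treat the tail of the CDF: for $x\to-\infty$ we write $\Fnul(x)=\int_{-\infty}^x \nu_l(t)\,\dd t=\int_{-\infty}^x \dfrac{c_\mu}{(1+t^2)^{(\mu+1)/2}}\,\dd t$. Since $(1+t^2)^{(\mu+1)/2}\sim |t|^{\mu+1}$ as $|t|\to\infty$, the integrand is equivalent to $c_\mu |t|^{-\mu-1}$, and integrating from $-\infty$ to $x$ gives $\Fnul(x)\sim c_\mu \dfrac{|x|^{-\mu}}{\mu}$; to make this rigorous one either invokes a standard comparison/L'Hôpital-type lemma for integrals of regularly varying functions, or does the elementary bound $\int_{-\infty}^x (1+t^2)^{-(\mu+1)/2}\dd t \big/ \int_{-\infty}^x |t|^{-\mu-1}\dd t \to 1$. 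The case $1-\Fnul(x)$ as $x\to+\infty$ is identical by the symmetry $\nu_l(-t)=\nu_l(t)$, giving $1-\Fnul(x)\sim c_\mu |x|^{-\mu}/\mu$.

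Next I would invert these relations to get the asymptotics of $\Fnulinv{l}$. Setting $u=\Fnul(x)$ with $x\to-\infty$ (equivalently $u\to 0^+$), the first equivalence reads $u\sim c_\mu |x|^{-\mu}/\mu$, so $|x|^{-\mu}\sim \mu u/c_\mu$, hence $|x|\sim (\mu u/c_\mu)^{-1/\mu}=(c_\mu/\mu)^{1/\mu} u^{-1/\mu}$; since $x<0$ this yields $\Fnulinv{l}(u)=x\sim -\tilde c_\mu u^{-1/\mu}$ with $\tilde c_\mu=(c_\mu/\mu)^{1/\mu}$. The case $u\to 1^-$ follows the same way from $1-u\sim c_\mu |x|^{-\mu}/\mu$ with $x\to+\infty$, giving $\Fnulinv{l}(u)\sim \tilde c_\mu (1-u)^{-1/\mu}$. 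The small technical point to verify is that $\Fnulinv{l}$ is well-defined and that $x\to\pm\infty \iff u\to 0^+,1^-$, which is immediate since $\Fnul$ is a continuous strictly increasing bijection from $\R$ onto $(0,1)$ (already noted in \Hnu).

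I expect the main obstacle — really the only non-routine point — to be justifying the passage from "integrand is asymptotically equivalent to $c_\mu|t|^{-\mu-1}$" to "the integral (the tail) is asymptotically equivalent to the integral of $c_\mu|t|^{-\mu-1}$". This is a standard fact (integration preserves asymptotic equivalence for tails of positive, eventually monotone, non-integrable-at-the-relevant-end... here integrable-at-$\infty$ functions), but it should be stated carefully: for any $\varepsilon>0$ there is $A$ such that $(1-\varepsilon)c_\mu|t|^{-\mu-1}\le \nu_l(t)\le (1+\varepsilon)c_\mu|t|^{-\mu-1}$ for $|t|\ge A$, and integrating this sandwich over $(-\infty,x]$ for $x\le -A$ (the constant $\int_{-\infty}^{-A}$ part being negligible compared to the divergent-as-$x\to-\infty$... here vanishing, so actually $O(|x|^{-\mu})$ — one checks the finite correction $\int_{-A}^{x}$ dominates) gives the claim after letting $\varepsilon\downarrow 0$. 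Once this comparison lemma is in hand, everything else is elementary algebra, and the inversion step carries no difficulty beyond continuity and monotonicity of $\Fnul$.
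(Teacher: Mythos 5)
Your proposal is correct and follows essentially the same route as the paper: asymptotics of the density tail $\nu_l(t)\sim c_\mu|t|^{-\mu-1}$ integrated to get the CDF tails, symmetry for the $x\to+\infty$ case, and inversion of $\Fnul(\Fnulinv{l}(u))=u$ for the inverse-CDF asymptotics. The only differences are cosmetic: you spell out the $\varepsilon$-sandwich justifying the passage from equivalent integrands to equivalent tail integrals (which the paper asserts directly under the $\sim$), and you obtain the $u\to 1^-$ case by re-inverting the upper tail rather than by the symmetry identity $\Fnulinv{l}(u)=-\Fnulinv{l}(1-u)$ used in the paper.
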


\begin{proof}In view of \eqref{eq:studentDensity} we know that $\nu_{l}(x_l)\sim_{x_l\to\pm\infty}\dfrac{c_\mu}{|x_l|^{\mu+1}}$. Besides, 
\begin{align*}
 \Fnul(x) = \int_{-\infty}^x \nu_l(x_l)\dd x_l \sim_{x\to-\infty} \int_{-\infty}^x  \dfrac{c_\mu}{|x_l|^{\mu+1}} \dd x_l 
 = \dfrac{c_\mu}{\mu |x|^{\mu} }.
\end{align*}
The distribution $\nu_l$ being symmetric, we have $\Fnul(x)+\Fnul(-x)=1$ and $\Fnulinv{l}(u)=-\Fnulinv{l}(1-u)$. Therefore, the estimate $1- \Fnul(x)$ for $x\to+\infty$ follows from the case $x\to-\infty$.

In order to compute $\Fnulinv{l}$ we use that $\Fnul(\Fnulinv{l}(u))=u$: 
 If $u\to 0^+$, $\Fnulinv{l}(u)\to-\infty$, $u = \Fnul(\Fnulinv{l}(u)) \sim_{u\to0^+} \dfrac{c_\mu}{\mu|\Fnulinv{l}(u)|^\mu}$. Therefore, 
 \begin{equation*}
  |\Fnulinv{l}(u)|^\mu \sim_{u\to0^+}  \dfrac{c_\mu}{\mu u},\qquad |\Fnulinv{l}(u)| \underset{u\to0^+}{=} -\Fnulinv{l}(u) \sim_{u\to0^+}\left(\dfrac{c_\mu}{\mu u}\right)^{1/\mu},
 \end{equation*}
which is the advertised result. The case $u\to 1^-$ follows from the case for $u\to 0^+$ using again the symmetry of the distribution $\nu_l$.
\qed \end{proof}

\begin{lemma}[Estimates on derivatives of $\frac{1}{(1+|\mbx|^2)^{q/2}}$]\label{lemma:1overDumpingFactor:app} Let $\mbx\in\R^d$, $q\in\R^+$ and $n\in\N$. The following upper bound holds
\begin{equation}\label{eq:1overDumpingFactor:app}
\left|\partial^n_{x_l} \dfrac{1}{(1+|\mbx|^2)^{q/2}}\right|\leq C( 1+|\mbx|^2)^{-q/2-n/2}. 
\end{equation} 
\end{lemma}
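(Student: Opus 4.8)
# Proof Proposal for Lemma (Estimates on derivatives of $1/(1+|\mbx|^2)^{q/2}$)

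\textbf{Overall approach.} The plan is to prove \eqref{eq:1overDumpingFactor:app} by induction on $n$. The key structural observation is that every $x_l$-derivative of a function of the form $x_l \mapsto P(x_l)(1+|\mbx|^2)^{-q/2-k/2}$, where $P$ is a polynomial in $x_l$ (with coefficients depending on the other coordinates) of degree at most $k$, produces again a sum of two terms of the same shape but with $k$ replaced by $k+1$. Tracking the degree of the polynomial prefactor against the exponent of the damping factor is exactly what yields the claimed bound, because a monomial $x_l^j$ with $j \le k$ satisfies $|x_l^j| \le (1+|\mbx|^2)^{j/2} \le (1+|\mbx|^2)^{k/2}$, which cancels half of the extra decay.

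\textbf{Key steps.} First I would set $g(\mbx) := (1+|\mbx|^2)^{-q/2}$ and prove by induction the following more precise claim: for every $n \in \N$ there is a polynomial $Q_n$ in one variable of degree at most $n$, with coefficients that are bounded functions of $\mbx$ (in fact constants depending only on $q$ and $n$ times powers of components, but boundedness after the rescaling is all we need), such that
\begin{equation}
\nonumber
\partial_{x_l}^n g(\mbx) = \sum_{j=0}^{n} a_{n,j}\, x_l^{\,j}\, (1+|\mbx|^2)^{-q/2 - (n+j')/2}
\end{equation}
for suitable integers $j' \ge j$ and real constants $a_{n,j}$ depending only on $q,n$. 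More cleanly: I would show $\partial_{x_l}^n g(\mbx) = R_n(\mbx)\,(1+|\mbx|^2)^{-q/2-n}$ where $R_n$ is a polynomial of degree at most $n$ in $x_l$. The base case $n=0$ is trivial with $R_0 \equiv 1$. For the inductive step, differentiate: $\partial_{x_l}\big(R_n(\mbx)(1+|\mbx|^2)^{-q/2-n}\big) = (\partial_{x_l}R_n)(1+|\mbx|^2)^{-q/2-n} + R_n \cdot (-q-2n) x_l (1+|\mbx|^2)^{-q/2-n-1}$; multiplying the first term by $(1+|\mbx|^2)/(1+|\mbx|^2)$ to unify the exponent to $-q/2-n-1$ shows $R_{n+1}(\mbx) = (1+|\mbx|^2)\partial_{x_l}R_n - (q+2n)x_l R_n$. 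Since $\partial_{x_l}R_n$ has degree $\le n-1$ in $x_l$, the term $(1+|\mbx|^2)\partial_{x_l}R_n$ has $x_l$-degree $\le n+1$; and $x_l R_n$ has degree $\le n+1$. So $R_{n+1}$ has degree $\le n+1$ in $x_l$, closing the induction. Finally, using $|x_l|^j \le (1+|\mbx|^2)^{j/2}$ for $j \le n$ and that the non-$x_l$ coefficients of $R_n$ are, by the same recursion, polynomials bounded in absolute value by $C(1+|\mbx|^2)^{(n-j)/2}$ — one checks this along the induction — gives $|R_n(\mbx)| \le C(1+|\mbx|^2)^{n/2}$, hence $|\partial_{x_l}^n g(\mbx)| \le C(1+|\mbx|^2)^{n/2}(1+|\mbx|^2)^{-q/2-n} = C(1+|\mbx|^2)^{-q/2-n/2}$, which is \eqref{eq:1overDumpingFactor:app}.

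\textbf{Main obstacle.} The only delicate bookkeeping is verifying uniformly in $\mbx$ that the full polynomial $R_n(\mbx)$ (not just its $x_l$-degree, but the size of all its coefficients, which involve the other coordinates through factors of $|\mbx|^2$) is controlled by $C(1+|\mbx|^2)^{n/2}$. This is handled by strengthening the inductive hypothesis to include the bound $|R_n(\mbx)| \le C_{n,q}(1+|\mbx|^2)^{n/2}$ directly and checking it propagates through the recursion $R_{n+1} = (1+|\mbx|^2)\partial_{x_l}R_n - (q+2n)x_l R_n$: the first term is bounded by $(1+|\mbx|^2)\cdot C(1+|\mbx|^2)^{(n-1)/2} = C(1+|\mbx|^2)^{(n+1)/2}$ (using that $\partial_{x_l}$ lowers the homogeneous degree by one), and the second by $|x_l|\cdot C(1+|\mbx|^2)^{n/2} \le C(1+|\mbx|^2)^{(n+1)/2}$. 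Thus the strengthened hypothesis is self-reproducing, and no genuine difficulty remains beyond this routine induction.
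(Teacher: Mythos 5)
Your proof is correct in substance but takes a genuinely different route from the paper. The paper applies Fa\`a di Bruno's formula to $f(u)=u^{-q/2}$ and $g(\mbx)=1+|\mbx|^2$, uses that only the first and second $x_l$-derivatives of $g$ are nonzero to count how many factors in each product equal $2x_l$ versus $2$ (namely $2m-n$ and $n-m$ for the term with $m$ outer derivatives), and then bounds each term by $(1+|\mbx|^2)^{-q/2-m}(1+|x_l|)^{2m-n}\leq (1+|\mbx|^2)^{-q/2-n/2}$. Your induction on $n$ via the recursion $R_{n+1}=(1+|\mbx|^2)\partial_{x_l}R_n-(q+2n)x_lR_n$ avoids Fa\`a di Bruno entirely and is more elementary; the paper's choice is natural in its context because the same combinatorial formula (and its multivariate version) is reused in Lemma \ref{lemma:1overDumpingFactorMultivariate:app} and in the proofs of Lemmas \ref{lemma:lr_lims_zero} and \ref{lemma:lr_lims_zero:d}, whereas your argument is self-contained for the coordinate-wise case.

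One caveat: as finally stated, the strengthened hypothesis $|R_n(\mbx)|\le C(1+|\mbx|^2)^{n/2}$ is not by itself self-reproducing, because the recursion requires a bound on $\partial_{x_l}R_n$, which a pointwise size bound on $R_n$ does not provide (``$\partial_{x_l}$ lowers the homogeneous degree by one'' is a structural fact about $R_n$, not a consequence of the size bound). The fix is exactly the bookkeeping you sketch earlier and should be made the inductive hypothesis: $R_n=\sum_{a+2b\le n}c_{a,b}\,x_l^a(1+|\mbx|^2)^b$ with constants $c_{a,b}$ depending only on $q$ and $n$ (equivalently, the coefficient of $x_l^j$ is bounded by $C(1+|\mbx|^2)^{(n-j)/2}$). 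Since $\partial_{x_l}\bigl(x_l^a(1+|\mbx|^2)^b\bigr)=a\,x_l^{a-1}(1+|\mbx|^2)^b+2b\,x_l^{a+1}(1+|\mbx|^2)^{b-1}$ lowers the weight $a+2b$ by one, the recursion preserves this form, and $|x_l|^a(1+|\mbx|^2)^b\le(1+|\mbx|^2)^{(a+2b)/2}$ then yields $|R_n(\mbx)|\le C(1+|\mbx|^2)^{n/2}$ and hence \eqref{eq:1overDumpingFactor:app}. With that adjustment the argument is complete.
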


\begin{proof} We only have to consider the case $n\geq 1$. In order to compute $\partial^n_{x_l} \dfrac{1}{(1+|\mbx|^2)^{q/2}}$ we use the following Fa\`a di Bruno's formula (as long as the partial derivative is only with respect to one variable we can extend in the following way the one-dimensional Fa\`a di Bruno's formula \eqref{eq:1dFaaDiBruno}):
\begin{align}
\begin{split}
\partial_{x_l}^n f(g(\mbx)) &= \sum_{m=1}^n \frac1{m!}{\rm{d}}_{u}^m f(u)\big|_{u=g(\mbx)} \sum_{\mb{j}\in J_{m,n}} \dfrac{n!}{j_1!j_2!\cdots j_m!}
\prod_{i=1}^m \partial_{x_l}^{j_i} g(\mbx),\\
J_{m,n}&=\{\mb{j}=(j_1,\ldots,j_m)\in \N^m_+:j_1+\ldots+j_m = n\}.
\end{split}
\label{eq:FdB:multidim}
\end{align}
Here, consider $f(u)=u^{-q/2}$ and $g(\mbx)=1+|\mbx|^2$: ${\rm d}^m_u f(u) = C_{m,q} u^{-q/2-m}$ and $\partial_{x_l}^{j} g(\mbx)=2 x_l\1_{j=1}+2\1_{j=2}$. 
Observe that in the Fa\`a di Bruno's formula \eqref{eq:FdB:multidim}, the sum over $\mb{j}\in J_{m,n}$ will be made only for $j_i=1$ with $m_1:=\#\{j_i:\mb{j}\in J_{m,n}, j_i=1\}$ terms, and for $j_i=2$ with $m_2:=\#\{j_i:\mb{j}\in J_{m,n}, j_i=2\}$ terms. Owing to this property and by definition of $J_{m,n}$, we have $m_1+2m_2=n$ and $m_1+m_2=m$, which results in $m_1=2m-n$ and $m_2=n-m$. Invoking the form of the derivatives $\partial_{x_l}^{j_i} g(\mbx)$, it readily follows that 
$$\prod_{i=1}^m \partial_{x_l}^{j_i} g(\mbx)=2^m x^{m_1}_l.$$ 
All in all, we deduce
 \begin{align*}
\left|\partial^n_{x_l} \dfrac{1}{(1+|\mbx|^2)^{q/2}}\right| &\leq C \sum_{m=\ceil{n/2}}^n
( 1+|\mbx|^2)^{-q/2-m}  (1+|x_l|)^{2m-n}\\
&\leq C \sum_{m=\ceil{n/2}}^n
( 1+|\mbx|^2)^{-q/2-m}  (1+|\mbx|^2)^{m-n/2}\leq C( 1+|\mbx|^2)^{-q/2-n/2}.
\end{align*}
\qed \end{proof}

\begin{lemma}[Estimates on derivatives of $\frac{1}{(1+|\mbx|^2)^{q/2}}$]\label{lemma:1overDumpingFactorMultivariate:app} Let $\mbx\in\R^d$, $q\in\R^+$, $\mb{n} = (n_1,\ldots,n_d) \in \N_+^d$. The following upper bound holds
\begin{equation}\label{eq:1overDumpingFactorMultivariate:app}
\left|{\partial^{\mb n}_x} \dfrac{1}{(1+|\mbx|^2)^{q/2}}\right|\leq C( 1+|\mbx|^2)^{-q/2-\overline{\mb n}/2}. 
\end{equation} 
\end{lemma}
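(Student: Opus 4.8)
The plan is to reduce the multivariate case to the one-variable case already handled in Lemma \ref{lemma:1overDumpingFactor:app}, by iterating the single-variable Fa\`a di Bruno argument one coordinate at a time. First I would write $\mb n = (n_1,\dots,n_d)$ and note ${\partial^{\mb n}_x} = \partial_{x_1}^{n_1}\cdots \partial_{x_d}^{n_d}$, so the mixed derivative is obtained by applying the single-coordinate operators successively. The key observation is that the bound \eqref{eq:1overDumpingFactor:app} is not quite enough as a black box for iteration, because after applying $\partial_{x_d}^{n_d}$ one obtains a sum of terms of a slightly more general shape (powers of $x_l$ times negative powers of $1+|\mbx|^2$), which must itself be differentiated in the remaining variables. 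So the cleanest route is to prove directly, by the same reasoning as in Lemma \ref{lemma:1overDumpingFactor:app} but carried out in all coordinates at once, a structural description of ${\partial^{\mb n}_x}(1+|\mbx|^2)^{-q/2}$.

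Concretely, I would show by induction on $\overline{\mb n}$ (or by a direct multivariate Fa\`a di Bruno formula, e.g.\ the one from \cite{constantine:savits:96} already invoked in Lemma \ref{lemma:lr_lims_zero:d:Multivariate}) that
\begin{equation}
\nonumber
{\partial^{\mb n}_x} \dfrac{1}{(1+|\mbx|^2)^{q/2}} = \sum_{\mb a} c_{\mb a}\, \mbx^{\mb a}\, (1+|\mbx|^2)^{-q/2-s(\mb a)},
\end{equation}
where the finite sum runs over multi-indices $\mb a \le \mb n$ with $\mb a$ having the same parity pattern as $\mb n$ componentwise, $s(\mb a)$ is a nonnegative integer with $2 s(\mb a) \ge \overline{\mb n} + \overline{\mb a}$ (reflecting that each factor $x_l$ brought down carries a compensating extra negative power, exactly as $m_1 = 2m-n$ in the proof of Lemma \ref{lemma:1overDumpingFactor:app}), and the $c_{\mb a}$ are constants depending only on $\mb n$ and $q$. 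Then, using $|\mbx^{\mb a}| = \prod_l |x_l|^{a_l} \le (1+|\mbx|^2)^{\overline{\mb a}/2}$, each summand is bounded in absolute value by $C(1+|\mbx|^2)^{\overline{\mb a}/2 - q/2 - s(\mb a)} \le C(1+|\mbx|^2)^{-q/2 - \overline{\mb n}/2}$, since $\overline{\mb a}/2 - s(\mb a) \le -\overline{\mb n}/2$. Summing the finitely many terms gives \eqref{eq:1overDumpingFactorMultivariate:app} with a constant $C$ depending only on $d$, $q$, $\mb n$.

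I expect no real obstacle here: the statement is a routine generalization, and the only point requiring a little care is bookkeeping the exponents in the structural formula — specifically verifying the inequality $2 s(\mb a) \ge \overline{\mb n} + \overline{\mb a}$ survives each application of $\partial_{x_l}$, which is immediate since differentiating either lowers some $a_l$ by $1$ while raising $s$ by $1$ (net change $+1$ on $2s$, $-1$ on $\overline{\mb a}$, consistent with $\overline{\mb n}$ increasing by $1$), or raises some $a_l$ by $1$ (change $+1$ on $\overline{\mb a}$, $+1$ on $\overline{\mb n}$, $0$ on $s$). Alternatively, one can simply quote the already-cited multivariate Fa\`a di Bruno formula applied to $f(u)=u^{-q/2}$, $g(\mbx)=1+|\mbx|^2$, and repeat verbatim the counting argument at the end of the proof of Lemma \ref{lemma:1overDumpingFactor:app} coordinatewise; either way the work is mechanical.
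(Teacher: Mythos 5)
Your approach is, in substance, the same as the paper's: the paper applies the multivariate Fa\`a di Bruno formula of \cite{constantine:savits:96} to $f(u)=u^{-q/2}$, $g(\mbx)=1+|\mbx|^2$, notes that only coordinate-wise derivatives of $g$ survive, and counts the number $a_l$ of first-order factors $2x_l$ brought down, which forces $\overline{\mb a}=a_1+\cdots+a_d=2m-\overline{\mb n}$ for the term involving $m$ derivatives of $f$; bounding $|\mbx^{\mb a}|\le (1+|\mbx|^2)^{\overline{\mb a}/2}$ then gives the claim. Your structural formula with the constraint $2s(\mb a)\ge \overline{\mb n}+\overline{\mb a}$ is the inequality version of exactly this counting (in the paper $s=m$ and equality holds), and your alternative route of quoting the multivariate formula and repeating the one-dimensional counting coordinatewise is literally the paper's proof.

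There is, however, a slip in the inductive bookkeeping you propose. Differentiating a term $c_{\mb a}\,\mbx^{\mb a}(1+|\mbx|^2)^{-q/2-s}$ in $x_l$ produces, by the product rule, (i) a term where $a_l$ is lowered by one and $s$ is \emph{unchanged}, and (ii) a term where $a_l$ is raised by one \emph{and} $s$ is raised by one (the factor $2x_l$ comes from the same differentiation that deepens the negative power). You have attached the increments the other way around: with your stated case ``raise $a_l$ by one, $s$ unchanged'' the invariant would require $2s\ge \overline{\mb n}+\overline{\mb a}+2$, which does not follow from the inductive hypothesis, so the verification as written does not close. With the correct increments both cases preserve $2s\ge\overline{\mb n}+\overline{\mb a}$ immediately, so the argument is fine once this bookkeeping is corrected (or once you simply invoke the Fa\`a di Bruno counting as the paper does).
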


\begin{proof} {We only have to consider the case $\overline{\mb n}\geq 1$. For such $\mb n$,} we use the following multivariate Fa\`a di Bruno's formula derived from \cite[Corollary 2.10]{constantine:savits:96}
\begin{align}
\begin{split}
{\partial^{\mb n}_x} f(g(\mbx)) &= \sum_{m=1}^{\overline{\mb n}} {\rm{d}}_{u}^m f(u)\big|_{u=g(\mbx)} \sum_{J\in \mb{J}_{m,\mb{n}}} C_{m,J}
\prod_{i=1}^m {\partial^{\mb{j}_i}_{x}} g(\mbx),\\
\mb{J}_{m,\mb{n}}&=\{J= \begin{pmatrix} \mb{j}_1  \\ \vdots \\ \mb{j}_m  \end{pmatrix} \in \N^m \times \N^d : \mb{j}_1+\ldots+\mb{j}_m = \mb{n}, \mb{j}_1,\ldots,\mb{j}_m \neq \mb{0}\},
\end{split}
\label{eq:FdB:multivariate}
\end{align}
where the positive constants $C_{m,J}$ depend on $m$ and the matrix $J$ under consideration.

Note that non coordinate-wise derivatives of $g(\mbx)=1+|\mb{x}|^2$ are all zero. Besides, coordinate-wise derivatives of $g$ are $\partial_{x_l}^{j} g(\mbx)=2 x_l\1_{j=1}+2\1_{j=2}$. For a given $J$, let $a_{l} \in\N\, \forall l=1,\ldots,d$ denote the number of vectors {in $J$} with $1$ in the $l$-th coordinate and zero anywhere else. It holds that 
\begin{align*}
\prod_{i=1}^m \partial^{\mb{j}_i}_x g(\mbx) &= \1_{2m\geq \overline{\mb n}}C x_1^{a_{1}}\cdots x_d^{a_{d}} \mbox{ with } a_{1}+\ldots+a_{d} = 2m-\overline{\mb n},\\
{\left|\prod_{i=1}^m \partial^{\mb{j}_i}_x g(\mbx) \right|}&
{\leq C   (1+|\mbx|^2)^{a_{1}/2}\cdots (1+|\mbx|^2)^{a_{d}/2} \1_{\sum_{l=1}^d a_{l} = 2m-\overline{\mb n}}\leq C   (1+|\mbx|^2)^{m-\overline{\mb n}/2}}
\end{align*}
{for $2m\geq \overline{\mb n}$.}
All in all, we deduce
 \begin{align*}
\left|{\partial^{\mb n}_x} \dfrac{1}{(1+|\mbx|^2)^{q/2}}\right| 
&\leq C \sum_{m=\ceil{\overline{\mb n}/2}}^{\overline{\mb n}}
( 1+|\mbx|^2)^{-q/2-m} (1+|\mbx|^2)^{m-\overline{\mb n}/2} \leq C( 1+|\mbx|^2)^{-q/2-\overline{\mb n}/2}.
\end{align*}
\qed
\end{proof}

\begin{lemma}[Estimates on derivatives of Student's t-distribution]\label{lemma:dnul:app} Let $n\in\N$. The following upper bound holds
\begin{equation}\label{eq:dnul:app}
 |\dnu{n}(x)| \leq C(1+x^2)^{-\frac{\mu+1}{2}-\frac{n}{2}}.
\end{equation}
 \end{lemma}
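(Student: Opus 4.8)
The plan is to mimic exactly the structure already used for the damping factor $\frac{1}{(1+|\mbx|^2)^{q/2}}$ in Lemma \ref{lemma:1overDumpingFactor:app}, since $\nu_l(x)$ in \eqref{eq:studentDensity} is, up to the multiplicative constant $c_\mu$, precisely $\frac{1}{(1+x^2)^{(\mu+1)/2}}$, i.e.\ the same type of function with exponent $q$ replaced by $\mu+1$ and in a single variable. First I would reduce to the case $n\geq 1$, since for $n=0$ the bound $|\nu_l(x)| = c_\mu(1+x^2)^{-(\mu+1)/2}$ is immediate from the definition. Then I would invoke the one-dimensional Fa\`a di Bruno's formula \eqref{eq:1dFaaDiBruno} (or equivalently the chain-rule expansion \eqref{eq:FdB:multidim} with $l$ fixed) with outer function $f(u) = c_\mu\, u^{-(\mu+1)/2}$ and inner function $g(x) = 1+x^2$.

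The key computation is then identical to the one in the proof of Lemma \ref{lemma:1overDumpingFactor:app}: $\dd^m_u f(u) = C_{m,\mu}\, u^{-(\mu+1)/2-m}$, while $\dd^{j}_x g(x) = 2x\,\1_{j=1} + 2\,\1_{j=2}$, so any nonzero contribution to the sum over $\mb{j}\in J_{m,n}$ comes from multi-indices with $m_1 := \#\{i : j_i = 1\}$ ones and $m_2 := \#\{i : j_i = 2\}$ twos, where $m_1 + 2m_2 = n$ and $m_1 + m_2 = m$; hence $m_1 = 2m-n$, $m_2 = n-m$, and $\prod_{i=1}^m \dd^{j_i}_x g(x) = 2^m x^{2m-n}$. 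This forces $\lceil n/2\rceil \le m \le n$. Plugging these in gives
\begin{align*}
|\dnu{n}(x)| &\le C \sum_{m=\lceil n/2\rceil}^n (1+x^2)^{-\frac{\mu+1}{2}-m}\, |x|^{2m-n} \\
&\le C \sum_{m=\lceil n/2\rceil}^n (1+x^2)^{-\frac{\mu+1}{2}-m}\, (1+x^2)^{m-n/2} \\
&\le C (1+x^2)^{-\frac{\mu+1}{2}-\frac{n}{2}},
\end{align*}
using $|x|^{2m-n}\le (1+x^2)^{m-n/2}$ (valid since $2m-n\ge 0$) and that the number of summands is bounded by a constant depending only on $n$. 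This is exactly \eqref{eq:dnul:app}.

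There is essentially no obstacle here: the lemma is a verbatim specialization of Lemma \ref{lemma:1overDumpingFactor:app} with $q \rightsquigarrow \mu+1$ and $d=1$, so the only thing to be careful about is bookkeeping of the combinatorial exponents $m_1, m_2$ and the range of $m$, which has already been done in the earlier proof. The cleanest write-up would simply say ``proceed as in the proof of Lemma \ref{lemma:1overDumpingFactor:app}, replacing the exponent $q$ by $\mu+1$ and specializing to $d=1$'' and then display the final chain of inequalities above. One minor point worth a sentence: the soft truncation issues and multi-index subtleties of the multivariate version (Lemma \ref{lemma:1overDumpingFactorMultivariate:app}) do not arise, because we differentiate only with respect to the single variable $x$.
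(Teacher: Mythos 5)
Your proposal is correct and matches the paper's approach exactly: the paper's proof is a one-line remark that the bound follows from the estimate \eqref{eq:1overDumpingFactor:app} of Lemma \ref{lemma:1overDumpingFactor:app}, i.e.\ precisely the specialization $q\rightsquigarrow\mu+1$, $d=1$ that you identify and then verify in detail. Your explicit Fa\`a di Bruno bookkeeping is a correct (if more verbose) rendering of what the paper leaves implicit.
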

\begin{proof}
This result follows readily using the previous estimate \eqref{eq:1overDumpingFactor:app}.
\qed \end{proof}

\begin{lemma}[Estimates on derivatives of $\Fnulinv{l}$]\label{lemma:dFnulinv:app}Let $n\in\N$. The following upper bounds hold
\begin{align}\label{eq:dFnulinv:app}
 \left|\dnFnulinvapp{n}{l} \right| \leq_{u\to0^+} C  u^{-\frac{\mu n +1}{\mu}}, \qquad
  \left|\dnFnulinvapp{n}{l} \right| \leq_{u\to1^-} C    (1-u)^{-\frac{\mu n +1}{\mu}},
\end{align}
where $C$ is a constant depending on $\mu$ and $n$.
\end{lemma}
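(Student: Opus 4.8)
\medskip
\noindent\textbf{Proof proposal.} The plan is to reduce to the behaviour near $u=0^+$, dispatch the cases $n=0,1$ by hand, and then run an induction on $n$ driven by the Fa\`a di Bruno expansion of $\Fnul\circ\Fnulinv{l}$. First I would use the symmetry $\Fnulinv{l}(u)=-\Fnulinv{l}(1-u)$ already recorded in Lemma~\ref{lemma:Fnui:Fnulinv:app}: it yields ${\rm d}^n_u\Fnulinv{l}(u)=(-1)^{n+1}\big({\rm d}^n_v\Fnulinv{l}(v)\big)\big|_{v=1-u}$, so the estimate near $u=1^-$ is an immediate consequence of the one near $u=0^+$ and only the latter need be proved. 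Write $G:=\Fnulinv{l}$, so $\Fnul(G(u))=u$ on $(0,1)$ and $\Fnul'=\nu_l$. Lemma~\ref{lemma:Fnui:Fnulinv:app} gives $|G(u)|\sim\tilde c_\mu\,u^{-1/\mu}$ as $u\to0^+$ (this is the case $n=0$), and the explicit form \eqref{eq:studentDensity} of $\nu_l$ gives $c\,|x|^{-(\mu+1)}\leq\nu_l(x)\leq|x|^{-(\mu+1)}$ for $|x|\geq 1$; since $|G(u)|\to+\infty$, this entails $1/\nu_l(G(u))\leq C\,u^{-(\mu+1)/\mu}$ for $u$ small. Differentiating $\Fnul(G(u))=u$ once then gives $G'(u)=1/\nu_l(G(u))$, i.e.\ the case $n=1$.

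For $n\geq 2$ I would assume $|{\rm d}^k_u G(u)|\leq C\,u^{-(\mu k+1)/\mu}$ for all $1\leq k\leq n-1$ and $u$ small, and differentiate $\Fnul(G(u))=u$ exactly $n$ times using the Fa\`a di Bruno formula \eqref{eq:1dFaaDiBruno}. The only term containing ${\rm d}^n_u G$ is the $m=1$ term, equal to $\nu_l(G(u))\,{\rm d}^n_u G(u)$, while ${\rm d}^n_u u=0$ for $n\geq2$; isolating it and using $\Fnul^{(m)}=\nu_l^{(m-1)}$ yields
\begin{equation*}
{\rm d}^n_u G(u)=-\frac{1}{\nu_l(G(u))}\sum_{m=2}^n\frac{1}{m!}\,\nu_l^{(m-1)}(G(u))\sum_{\mb{j}\in J_{m,n}}\frac{n!}{j_1!\cdots j_m!}\prod_{i=1}^m{\rm d}^{j_i}_u G(u).
\end{equation*}
In each monomial on the right one has $j_1+\dots+j_m=n$ with every $j_i\leq n-1$, so the inductive hypothesis gives $\big|\prod_{i=1}^m{\rm d}^{j_i}_u G(u)\big|\leq C\,u^{-\sum_i(\mu j_i+1)/\mu}=C\,u^{-(\mu n+m)/\mu}$. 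Next, Lemma~\ref{lemma:dnul:app} together with $|G(u)|\sim\tilde c_\mu u^{-1/\mu}$ gives $|\nu_l^{(m-1)}(G(u))|\leq C(1+|G(u)|^2)^{-\frac{\mu+1}{2}-\frac{m-1}{2}}\leq C\,|G(u)|^{-(\mu+m)}\leq C\,u^{(\mu+m)/\mu}$. Combining these with $1/\nu_l(G(u))\leq C\,u^{-(\mu+1)/\mu}$, every summand is bounded by $C\,u^{-(\mu+1)/\mu}\,u^{(\mu+m)/\mu}\,u^{-(\mu n+m)/\mu}=C\,u^{-(\mu n+1)/\mu}$, and summing over the finitely many $m$ and $\mb{j}$ closes the induction and proves \eqref{eq:dFnulinv:app} near $0^+$; the bound near $1^-$ then follows from the symmetry reduction.

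The main obstacle is really only the bookkeeping inside the Fa\`a di Bruno expansion, not any analytic difficulty: one has to check that ${\rm d}^n_u G$ appears solely in the $m=1$ term (so the factor $\nu_l(G(u))$ can be divided out and no implicit equation for the top derivative remains), that every surviving term involves derivatives of $G$ of order at most $n-1$ (so the induction is legitimate), and that the three weight exponents telescope exactly, the cancellation $-(\mu+1)+(\mu+m)-(\mu n+m)=-(\mu n+1)$ being forced precisely by the constraint $\sum_i j_i=n$ defining $J_{m,n}$. All the analytic input needed---the asymptotics of $\Fnulinv{l}$ and the pointwise bounds on the derivatives of $\nu_l$---is already provided by Lemmas~\ref{lemma:Fnui:Fnulinv:app} and~\ref{lemma:dnul:app}.
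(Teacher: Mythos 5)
Your argument is correct: the reduction of the $u\to 1^-$ case via the symmetry $\Fnulinv{l}(u)=-\Fnulinv{l}(1-u)$, the base cases $n=0,1$, the isolation of the top derivative from the $m=1$ term of the Fa\`a di Bruno expansion of $\Fnul(\Fnulinv{l}(u))=u$ (legitimate, since for $m\geq 2$ every $j_i\leq n-1$), and the exponent bookkeeping $-(\mu+1)+(\mu+m)-(\mu n+m)=-(\mu n+1)$ all check out, with the needed inputs indeed supplied by Lemmas \ref{lemma:Fnui:Fnulinv:app} and \ref{lemma:dnul:app}. The route is genuinely different from the paper's, though: the paper does not induct at all, but invokes a closed-form combinatorial formula for higher derivatives of inverse functions, expressing $\dnFnulinvapp{n+1}{l}$ directly as a sum of products of negative powers of $\nu_l(\Fnulinv{l}(u))$ and derivatives $\dnu{b_i-1}$ evaluated at $\Fnulinv{l}(u)$; plugging in Lemma \ref{lemma:dnul:app} then yields the intermediate bound $|\dnFnulinvapp{n}{l}|\leq C\,(1+(\Fnulinv{l}(u))^2)^{(\mu n+1)/2}$, valid on all of $(0,1)$, and the two endpoint estimates follow by applying the asymptotics of $\Fnulinv{l}$ at $0^+$ and $1^-$ separately (no symmetry step). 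Your induction buys self-containedness — it needs nothing beyond the Fa\`a di Bruno formula \eqref{eq:1dFaaDiBruno} already used in the paper, and avoids citing the inverse-derivative identity — at the cost of losing that explicit global bound in terms of $\Fnulinv{l}(u)$, which the paper's computation gives for free; since only the boundary asymptotics are ever used, nothing is lost for the purposes of the lemma. One cosmetic slip: the pointwise comparison $\nu_l(x)\leq |x|^{-(\mu+1)}$ you state should carry the constant $c_\mu$, but only the lower bound on $\nu_l$ is actually used to control $1/\nu_l(\Fnulinv{l}(u))$, so this does not affect the argument.
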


\begin{proof}
Using the combinatorial formula for higher derivatives of inverses \cite{Johnson02thecuriousInverse} and the fact that for $b\in\N$, $b\geq 1$, ${\rm d}^b_{x}\Fnul(x) = \dnu{b-1}(x)$, 
\begin{align*}
 \dnFnulinvapp{n+1}{l} &= \sum_{k=0}^n  \dfrac{(-1)^k}{k!} [\nu_l(x)|_{x=\Fnulinv{l}(u)}]^{-n-k-1} \\
& \qquad \qquad\times\sum_{\substack{b_1+\cdots+b_{k} = n+k \\ b_i\geq 2}} \dfrac{(n+k)!}{b_1!b_2!\cdots b_k!} \prod_{i=1}^{k} \dnu{b_i-1}(x)|_{x=\Fnulinv{l}(u)}.
 \end{align*}
 Therefore, using \eqref{eq:dnul:app},
 \begin{align*}
   |\dnFnulinvapp{n+1}{l}| 
   &\mbox{ } \leq \sum_{k=0}^n C \left[ \left(1+\left(\Fnulinv{l}(u)\right)^2\right)^{-\frac{\mu+1}{2}} \right]^{-n-k-1}  \\
   &\qquad \qquad \times\sum_{\substack{b_1+\cdots+b_{k} = n+k \\ b_i\geq 2}} \quad \prod_{i=1}^{k} \left(1+\left(\Fnulinv{l}(u)\right)^2\right)^{-\frac{\mu+1}{2}-\frac{b_i-1}{2}}  \\
  &\mbox{ } \leq \sum_{k=0}^n C \left(1+\left(\Fnulinv{l}(u)\right)^2\right)^{\frac{\mu+1}{2}(n+k+1)-\frac{k(\mu+1)+n}{2}}\\
  &\mbox{ } \leq C \left(1+\left(\Fnulinv{l}(u)\right)^2\right)^{\frac{\mu(n+1)+1}{2}}.
\end{align*}
Finally, combining the  previous upper bound with \eqref{eq:Fnui:Fnulinv:app} yields
\begin{align*}
 \left|\dnFnulinvapp{n}{l} \right| & \leq_{u\to0^+} C \left|\Fnulinv{l}(u) \right|^{\mu n+1} 
\leq_{u\to0^+} C u^{-\frac{\mu n +1}{\mu}}, \\
  \left|\dnFnulinvapp{n}{l} \right| &\leq_{u\to1^-} C 
   (1-u)^{-\frac{\mu n +1}{\mu}},
\end{align*}
where the constant $C$ may change value along computations.
\qed \end{proof}


\paragraph{\textbf{Compliance with Ethical Standards}}
\begin{itemize}
 \item Conflict of Interest: The authors declare that they have no conflict of interest.
\end{itemize}

\bibliographystyle{spmpsci}      
\bibliography{GlobalRegression_bib}   


\end{document}